\DeclareMathOperator{\I}{\mathbbm{1}}%
\DeclareMathOperator{\Law}{Law}%
\DeclareMathOperator{\supp}{supp}
\def\E{\hskip.15ex\mathsf{E}\hskip.10ex}
\def\P{\mathsf{P}}
\def\eps{\varepsilon}
\let\temp\phi
\let\phi\varphi
\let\varphi\temp
\newtheorem{Theorem}{Theorem}[section]
\newtheorem{Lemma}[Theorem]{Lemma}
\newtheorem{Proposition}[Theorem]{Proposition}
\newtheorem{Assumption}[Theorem]{Assumption}
\newtheorem{Definition}[Theorem]{Definition}
\theoremstyle{definition}\newtheorem{Remark}[Theorem]{Remark}
\newcommand{\snorm}[1]{{\| #1
  \|}}
\numberwithin{equation}{section}
\renewcommand{\ge}{\geqslant}
\renewcommand{\le}{\leqslant}
\newcommand{\nn}{\nonumber}
\newcommand{\wt}{\widetilde}
\newcommand{\wh}{\widehat}
\newcommand{\C}{\mathcal{C}}
\newcommand{\D}{\mathbb{D}}
\newcommand{\F}{\mathcal{F}}
\newcommand{\SL}{\mathbb{L}}
\newcommand{\OL}{\mathcal{L}}
\newcommand{\N}{\mathbb{N}}
\newcommand{\R}{\mathbb{R}}
\renewcommand{\S}{\mathcal{S}}
\newcommand{\Z}{\mathbb{Z}}
\newcommand{\la}{\langle}
\newcommand{\ra}{\rangle}
\definecolor{Red}{rgb}{1,0,0}
\definecolor{Blue}{rgb}{0,0,1}
\definecolor{Olive}{rgb}{0.41,0.55,0.13}
\definecolor{Yarok}{rgb}{0,0.5,0}
\definecolor{Green}{rgb}{0,1,0}
\definecolor{MGreen}{rgb}{0,0.8,0}
\definecolor{DGreen}{rgb}{0,0.55,0}
\definecolor{Yellow}{rgb}{1,1,0}
\definecolor{Cyan}{rgb}{0,1,1}
\definecolor{Magenta}{rgb}{1,0,1}
\definecolor{Orange}{rgb}{1,.5,0}
\definecolor{Violet}{rgb}{.5,0,.5}
\definecolor{Purple}{rgb}{.75,0,.25}
\definecolor{Brown}{rgb}{.75,.5,.25}
\definecolor{Grey}{rgb}{.7,.7,.7}
\definecolor{Black}{rgb}{0,0,0}
\newcommand{\ignore}[1]{{}}
\begin{document}

\title{{Strong existence and uniqueness for stable stochastic differential equations with distributional drift}}

\author{ Siva Athreya%
  \thanks{Supported in part by ISF-UGC Grant and CPDA.}\setcounter{footnote}{3}
 \and
 Oleg Butkovsky%
 \thanks{Supported in part by ISF-UGC grant No. 1131/14 and DFG Research Unit FOR 2402.}
 \and
Leonid Mytnik%
 \thanks{Supported in party by ISF-UGC grant No. 1131/14.}
}

\maketitle










\begin{abstract}
We consider the stochastic differential equation
$$ dX_t = b(X_t) dt + dL_t,$$ where the drift $b$ is a generalized function and $L$ is a
symmetric one dimensional $\alpha$-stable L\'evy processes, $\alpha \in (1, 2)$. We
define the notion of solution to this equation and establish strong existence and uniqueness
whenever $b$ belongs to the Besov--H\"{o}lder space $\C^\beta$ for $\beta >1/2-\alpha/2$.
\end{abstract}

\section{Introduction}

In this article we consider the stochastic differential equation (SDE)
\begin{equation}\label{mainsde}
X_t=x+ \int_0^tb(X_s)\,ds + L_t,\quad t\ge0,
\end{equation}
where the initial condition $x\in\R$, $L$ is a symmetric
$1$-dimensional $\alpha$-stable process, $\alpha\in(1,2)$, and the
drift $b$ is in the H\"older-Besov space $\C^\beta =\C^\beta(\R,\R)$ for $\beta \in\R$ (see \cite [Definition~7]{P15}). When $\beta \le 0$ this equation is not well--posed in the classical sense. Indeed, in this case $b$ is not a function but just a distribution and the expression $b(X_s)$ is not well-defined. Thus it is not
clear a priori what should be called a \textit{solution} to the
SDE. Inspired by the Bass--Chen approach \cite{BC}, we formulate a
natural notion of a solution to \eqref{mainsde} (see Definition
\ref{D:sol}) and establish strong existence and pathwise uniqueness of
a solution when $\beta > \frac{1-\alpha}{2}$, see Theorem~\ref{T:1}.

It is well--known for quite a long time that ordinary differential equations (ODEs) regularize when an additional forcing by Brownian motion is added. Indeed, if $b\colon \R^d\to\R^d$, $d\ge1$, is a $\beta$-H\"older function, $0<\beta<1$, then an ODE
\begin{equation*}
d X_t=b(X_t) dt,\quad t\ge0
\end{equation*}
might have multiple solutions or no solutions when $b$ is a bounded measurable function. However, once the random forcing by Brownian motion $(B_t)_{t\ge 0}$ is added, the corresponding SDE
\begin{equation}
\label{sde_BM}
dX_t = b(X_t)dt +dB_t,\quad t\ge0
\end{equation}
has a unique strong solution even for bounded measurable $b$ without any additional assumptions on continuity. This phenomenon is called in the literature ``regularization by noise''.
For SDE \eqref{sde_BM} strong existence and uniqueness of solutions was established by Zvonkin in~\cite{zvonkin74} in case $d=1$ and extended by
Veretennikov \cite{ver80} to a multidimensional case. Later
Krylov and R\"ockner \cite{kr_rock05} generalized this result for the case of a
locally unbounded $b$ under a suitable integrability condition. In all the cases, the  proofs use a Zvonkin-type transformation \cite{zvonkin74} that allows to make the ``non-regular'' drift much  more regular. 

It turned out that in the one-dimensional case it is possible to consider drifts that are not functions but rather generalized functions. In this case one needs to specify what  is exactly meant by a solution to \eqref{sde_BM} since the term $\int_0^t b(X_s)\, ds$ is not well-defined. This was done by Bass and Chen in~\cite{BC}, who suggested a natural definition of a solution via an approximating scheme. They have also established strong existence and uniqueness for \eqref{sde_BM} whenever $b$ is the distributional derivative of $\C^{\gamma}$ functions with $\gamma>1/2$. Their main tool was again the Zvonkin method; they used the fact that for $d=1$ the Zvonkin transformation can completely eliminate the drift.

The above question has also been studied for other types of forcing
instead of  Brownian motion. For results about general continuous
forcings we refer the reader to \cite{CG16}. In the case of a forcing by a pure
jump process, it is clear that this process should have ``sufficiently
many'' small jumps. That is,  if  Brownian motion is replaced in
\eqref{sde_BM} just by the standard Poisson process, then this does
not give any improvement in the regularity properties of the
equation. Indeed, the equation will already have multiple solutions
while still ``waiting'' for the first jump of the Poisson
process. Thus it is natural to expect that the bigger  the intensity
of small jumps the rougher  drift $b$ can be.

Indeed, Tanaka, Tsuchiya, Watanabe in~\cite{bib:ttw74} proved that in the case $d=1$ equation \eqref{mainsde} has a pathwise unique solution if  $L$ is a symmetric $\alpha$-stable process, $b$ is a bounded continuous function and $\alpha\ge1$  (recall that the bigger the parameter $\alpha\in(0,2)$, the higher is the intensity 
of small jumps). On the other hand, it was also shown in \cite{bib:ttw74} that if $\alpha\in(0,1)$ and $b$ is bounded H\"older continuous with exponent $\beta$, where $0< \beta < 1- \alpha$, then equation \eqref{mainsde}
might have multiple solutions.  The case of higher dimensions was
resolved by Priola in~\cite{Pr12} who showed that in the case of
dimension $d\geq 2$ and $\alpha \in (1, 2)$, the pathwise uniqueness
holds for this equation if the drift $b$ is bounded and H\"older
continuous with exponent $\beta>1-\alpha/2$. This result was extended by Chen, Song and Zhang
in~\cite{bib:csz15} to the case $\alpha\in (0,1)$. Further, Bogachev
and Pilipenko in \cite[Theorem 1 and Remark 3]{BP15} showed strong existence
and uniqueness for (\ref{mainsde}) for $b$ belonging to a certain Kato class (see \cite[Definition 1, (9) and (10)]{BP15}). We note that the Kato class includes all bounded measurable functions
but does not necessarily contain $b$ which are generalized
functions; in particular, it is known that $\C^\beta$ for $\beta <0$ is not contained in the Kato class.


From the discussion above, the reader may notice the following gap
between the cases of $\alpha<2$ and $\alpha=2$. For $\alpha\in
(1,2)$ the strong existence and uniqueness for (\ref{mainsde}) is shown
by Bogachev and Pilipenko~\cite{BP15} for $b$ in the Kato class in any
dimension $d\geq 1$. However in the case of $\alpha =2$ and $d=1$
Bass and Chen \cite{BC} have shown that the strong existence and
uniqueness hold under much milder assumptions on $b$; namely $b$ can be the distributional derivative of $\C^{\gamma}$
 functions with $\gamma>1/2$.
This paper closes this gap, by showing that for $\alpha \in (1,2)$ in dimension
 $d=1$ the strong existence and uniqueness hold for (\ref{mainsde}) under much more relaxed conditions on the drift $b$ than in \cite{BP15}.
Our main result in Theorem~\ref{T:1} states that for $\alpha\in (1,2)$
there is a unique strong solution to~(\ref{mainsde}) if $b$ is in the
H\"older-Besov Space $\C^\beta$ for $\beta >
\frac{1-\alpha}{2}$. That is, loosely speaking, $b$ is allowed to be
a distributional derivative of a H\"older continuous function with the
H\"older exponent greater than $\frac{3-\alpha}{2}$. We note that
this bound on the regularity of $b$ exactly matches the result
of Bass, Chen~\cite{BC} for the case $\alpha=2$. To the best of our knowledge our
result is the first strong existence and uniqueness result for stable SDEs with a general distributional
drift.

To obtain this result we further develop the  Zvonkin drift transformation method. Note that in the case of  stable processes the Zvonkin transformation does not eliminate the drift even in $d=1$; thus the approach of Bass and Chen  \cite{BC} is not applicable here. An additional challenge comes from the fact that even with the proper definition the process $\int_0^t b(X_s)\,ds$ might be of infinite variation and hence $X$ might not be a semimartingale.


In this article we mainly consider \textit{strong} solutions to \eqref{mainsde}. Let us briefly mention that
other notions of existence and uniqueness have also been studied
for (\ref{sde_BM}) and (\ref{mainsde}). Weak existence and
uniqueness results for (\ref{sde_BM}) have been obtained in
~\cite{bib:frw03}, \cite{bib:frw04}, \cite{bib:fir17}, and
\cite{bib:zz17}. The question of weak uniqueness and existence for~\eqref{mainsde} was studied in Kulik \cite{K15} for $b$  measurable and locally bounded and in  Song~\cite{bib:kims14}, Chen, Wang~\cite{bib:chenw16} for $b$ from a certain Kato class. Some of these results are  also valid  for the case when
the SDEs have a non-trivial diffusion coefficient. A stronger notion of
path-by-path uniqueness has been established for (\ref{sde_BM}) in
a seminal work by Davie \cite{davie} when $b$ is a bounded
measurable function and it has been generalized by Priola \cite{Pr18} 
for (\ref{mainsde}) with $\alpha \in (1,2)$ and $b$ is a bounded continuous function with 
$\beta$ with $\beta > 1- \frac{\alpha}{2}.$

In the next section we will present the main results of the paper.

\bigskip

\noindent \textbf{Acknowledgments}. The authors are grateful to Nicolas Perkowski and David Pr\"{o}mel for helpful discussions about Besov spaces.
Part of the work on the project
has been done during the visits of the authors to the Fields Institute
(Toronto, Canada), Technion---Israel Institute of Technology (Haifa,
Israel) and Indian Statistical Institute (Bangalore). We thank them
all for their support and hospitality. OB is very grateful to the
Fields institute and especially to Bryan Eelhart for their support,
hospitality, and incredible coffee breaks. LM is grateful to the
Johannes Gutenberg University Mainz where part of this research has
been done.

\section{Main Result}\label{sec:mainresult}
We begin with introducing the basic notation and definitions.
For $k\in\Z_+$, $D\subset \R^k$ and function $f\colon D \to \R$, we denote its
supremum norm by $\snorm{f}:=\sup_{x\in D}|f(x)|$. If the
function $f$ is random, then supremum in the definition of $\snorm{f}$
will be taken only over \textit{nonrandom} variables. For $f,g\colon \R \to \R$, we define
$\langle f,g \rangle := \int_\R f(x)g(x)dx.$

 We denote by
$\C^\infty_b$ the space of all bounded continuous functions
$\R\to\R$. Let $\C^\infty_c$ be the space of all functions from
$\C^\infty_b$ with compact support. Let $\S$ be the space of Schwartz
functions $\R\to\R$ and let $\S^\prime$ be its dual space, i.e., the
space of Schwartz distributions. We will work with the Besov--H\"older
spaces ${\mathcal C}^\gamma:={\mathcal B}^\gamma_{\infty, \infty}$,
where $\gamma \in \R$, which are defined using the Littlewood-Paley
blocks (see, e.g., \cite [Definition~7]{P15}). Let $\|\cdot\|_\gamma$
be the norm associated with the space ${\mathcal C}^\gamma$,
$\gamma\in\R$.

We recall that for $\gamma\in(0,\infty)\setminus\mathbb{N}$ the space $\C^\gamma$ is just the usual H\"{o}lder space of functions that are $\lfloor\gamma\rfloor$ times continuously differentiable
and whose $\lfloor\gamma\rfloor$-th derivative is H\"{o}lder continuous with exponent
$\gamma-\lfloor\gamma\rfloor$. For $\gamma\in(-1,0)$ the space $\C^\gamma$ includes all derivatives (in the distributional sense) of  H\"{o}lder functions  with exponent $\gamma+1$.

In this article we study stochastic differential equation \eqref{mainsde}. Recall that since the drift $b$ is not a function but just a distribution the notion of the solution to this equation is not well--defined. Inspired by \cite[Definition~2.1]{BC} we give the following definition.

 \begin{Definition}\label{D:sol}
\label{defsolution} Let $\beta \in \R$, $\alpha \in (1,2)$ and $L=(L_t)_{t \geq 0}$ be a symmetric $1$-dimensional $\alpha$-stable process. 
We say that a c\`adl\`ag process $X=(X_t)_{t\ge0}$ is a
solution to \eqref{mainsde} with the initial condition $x\in\R$ if there exists a continuous process $A=(A_t)_{t\ge0}$ such that:
\begin{enumerate}
 \item $X_t=x+A_t+L_t$, $t\ge0$;
 \item for any sequence of functions $(b_n)_{n\in\Z_+}$ such that $b_n\in\C_b^\infty$, $n\in\Z_+$ and $\|b_n-b\|_{\beta}\to0$ as $n\to\infty$ we have
\begin{equation}\label{sxodka}
 A^n_t:= \int_0^tb_n(X_s)\,ds\to A_t,\,\,\text{as $n\to\infty$}
\end{equation}
 in probability uniformly over bounded time intervals;
\item for any $T>0$ and any $\kappa<(1+\frac{\beta}{\alpha})\wedge1$ there exists $C=C(T,\kappa)>0$ such that
\begin{equation}\label{3cond}
\E|A_t-A_s|^2\le C |t-s|^{2\kappa},\quad s,t\in[0,T].
\end{equation}

\end{enumerate}
\end{Definition}

Given a symmetric $\alpha$-stable process $L$ on a probability space,
a strong solution to (\ref{mainsde}) is a c\`adl\`ag process $X$ that
is adapted to the complete filtration generated by $L$ and which is a solution to
(\ref{mainsde}). A weak solution of (\ref{mainsde}) is a couple
$(X,L)$ on a complete filtered probability space $(\Omega,{\cal F}, (\F_t)_{t \ge 0}, \P)$ such that $X_t$ is adapted to ${\cal F}_t$, $L_t$ is
an $ (\F_t)_{t \ge 0}$ adapted symmetric $\alpha$-stable process and
$X$ is a solution to (\ref{mainsde}). We say weak uniqueness holds for
(\ref{mainsde}) if whenever $(X,L)$ and $(\wt{X}, \wt{L})$ are
two weak solutions of (\ref{mainsde}) and $X_0$ has the same
distribution as $\wt{X}_0$, then the process $(X_t)_{t\geq 0}$ has
the same law as the process $(\wt{X}_t)_{t\geq 0}.$ We say pathwise
uniqueness holds for (\ref{mainsde}) if whenever $(X,L)$ and
$(\wt{X},L)$ are two weak solutions of (\ref{mainsde}) with
common $L$ on a common probability space (w.r.t. possibly different filtrations) and with the same initial condition, then $\P(X_t = \wt{X}_t \mbox{ for all } t \geq 0) = 1.$ We
say that strong uniqueness holds for (\ref{mainsde}) if whenever $X$
and $\wt{X}$ are two strong solutions of (\ref{mainsde}) relative
to $L$ with the common initial condition $X_0$, then $\P(X_t = \wt{X}_t
\mbox{ for all } t \geq 0) = 1.$ Clearly, pathwise uniqueness implies
strong uniqueness.

Note that for $\beta>0$ Definition~\ref{D:sol} coincides with the
standard definition of a solution. 

\medskip

We are now ready to present our main result.
\begin{Theorem}\label{T:1}
For any $x\in\R$, $b \in \C^{\beta}$, $\alpha\in(1,2)$, $\beta > \frac{1-\alpha}{2}$, stochastic differential equation \eqref{mainsde} has a unique strong solution.
\end{Theorem}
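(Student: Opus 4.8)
The plan is to establish strong existence and pathwise uniqueness for \eqref{mainsde} separately, and then conclude. The engine of both parts is a Zvonkin-type change of variables built from a resolvent equation for the generator $\mathcal{L}^\alpha$ of $L$ (the fractional Laplacian of order $\alpha$). Concretely, for $\lambda>0$ large one solves
\[
\lambda u-\mathcal{L}^\alpha u=(1+u')\,b
\]
by a Banach fixed point argument in $\C^{\alpha+\beta}$, using the Schauder estimate $\|(\lambda-\mathcal{L}^\alpha)^{-1}f\|_{\gamma+\alpha}\le C\|f\|_\gamma$ together with the paraproduct estimate for the product on the right-hand side: a candidate $u\in\C^{\alpha+\beta}$ has $u'\in\C^{\alpha+\beta-1}$, and $(1+u')\,b$ is a well-defined element of $\C^\beta$ exactly when $(\alpha+\beta-1)+\beta>0$, i.e.\ when $\beta>\tfrac{1-\alpha}2$ --- this is precisely where the hypothesis enters. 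For $\lambda$ large the solution has $\|u\|_{\alpha+\beta}$, hence $\|u'\|_\infty$, arbitrarily small; since $\beta>\tfrac{1-\alpha}2$ forces $\alpha+\beta>\tfrac{1+\alpha}2>1$, the map $\phi:=\mathrm{id}+u$ is a $C^1$ diffeomorphism of $\R$ with $\phi'$ and $(\phi^{-1})'$ bounded, bounded away from $0$, and Hölder of exponent $(\alpha+\beta-1)\wedge1$, and (since $\mathcal{L}^\alpha\,\mathrm{id}=0$ by symmetry of $L$) it satisfies the pointwise identity $\mathcal{L}^\alpha\phi+\phi'\,b=\lambda u$.

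For existence I would take mollifications $b_n:=b*\rho_n\in\C^\infty_b$, so that $\sup_n\|b_n\|_\beta<\infty$ and $b_n\to b$ in $\C^{\beta'}$ for every $\beta'<\beta$ (enough, since the hypothesis is open), let $X^n$ be the classical strong solution of \eqref{mainsde} with drift $b_n$ (which exists and is pathwise unique by \cite{bib:ttw74,Pr12}), and let $u_n,\phi_n$ be the corresponding solution of the resolvent equation. Applying the classical It\^o formula to the semimartingale $X^n$ and using $\mathcal{L}^\alpha\phi_n+\phi_n'b_n=\lambda u_n$, the transformed process $Y^n:=\phi_n(X^n)$ solves the genuine, distribution-free jump SDE
\[
Y^n_t=\phi_n(x)+\lambda\int_0^t(u_n\circ\phi_n^{-1})(Y^n_s)\,ds+\int_0^t\!\!\int_\R\bigl[\phi_n(\phi_n^{-1}(Y^n_{s-})+z)-Y^n_{s-}\bigr]\,\widetilde N(ds,dz).
\]
Because $u_n\to u$ and $\phi_n\to\phi$ in $\C^{\alpha+\beta}$ (stability of the fixed point), the coefficients of this equation converge with uniform-in-$n$ bounds; a standard stability estimate (Gr\"onwall combined with Kunita's inequality for the compensated jump integral) then shows that $(Y^n)$ is Cauchy, with some limit $Y$, and therefore $X^n=\phi_n^{-1}(Y^n)\to X:=\phi^{-1}(Y)$ locally uniformly in probability. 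It remains to verify that $X$, with $A_t:=X_t-x-L_t$, is a solution in the sense of Definition~\ref{D:sol}: properties~2 and~3 are obtained by passing classical identities for the semimartingales $X^n$ (the It\^o formula applied to $v_n(X^n)$ with $v_n:=(\lambda-\mathcal{L}^\alpha)^{-1}b_n$, and a priori bounds on $\E\big|\int_s^t b_n(X^n_r)\,dr\big|^2$ from heat-kernel estimates for the drift-perturbed stable process and the duality $|\langle b_n,g\rangle|\le C\|b_n\|_\beta\|g\|_{-\beta}$) to the limit, the analytic estimates of Step~1 guaranteeing the convergence is uniform; the moment bound produces the exponent $\kappa$ up to $(1+\tfrac\beta\alpha)\wedge1$, and a density argument upgrades property~2 to arbitrary approximating sequences.

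For pathwise uniqueness, let $X,\widetilde X$ be two solutions of \eqref{mainsde} driven by the same $L$ and started at the same point. I would show that $Y:=\phi(X)$ and $\widetilde Y:=\phi(\widetilde X)$ both solve the transformed equation above (with $u,\phi$ in place of $u_n,\phi_n$); this is the a priori direction of the transformation, justified by approximating $\phi$ by $\phi_n$, applying the It\^o formula to $\phi_n(X^n)$ for solutions $X^n$ of the mollified equations approximating $X$, and passing to the limit --- property~2 of Definition~\ref{D:sol} identifying the limit of the drift terms, while the cancellation $\mathcal{L}^\alpha\phi+\phi'b=\lambda u$ ensures that only the regular combination $\lambda u$ is ever evaluated along a solution (the individual pieces $\mathcal{L}^\alpha\phi$ and $\phi'b$ being merely distributional). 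Once $Y$ and $\widetilde Y$ solve the same equation driven by the same noise, pathwise uniqueness for that equation forces $Y=\widetilde Y$, hence $X=\widetilde X$; combined with the strong existence established above (and, if one wishes, with the Yamada--Watanabe theorem for jump SDEs), this proves the theorem.

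The principal difficulty is twofold. First, a solution $X$ of \eqref{mainsde} need not be a semimartingale --- the drift $A$ may have infinite variation --- and $\phi$ is only $\C^{\alpha+\beta}$ with $\alpha+\beta$ possibly barely larger than $1$, so ``It\^o's formula for $\phi(X_t)$'' is not classical; making it rigorous (both to derive the transformed equation in the limit and, in the uniqueness part, to transform an arbitrary given solution) requires the careful approximation sketched above, with the cancellation identity doing the essential work. Second, and more delicate, the transformed SDE does not have Lipschitz coefficients: its effective noise coefficient behaves like $\phi'\circ\phi^{-1}\in\C^{(\alpha+\beta-1)\wedge1}$, which at the threshold $\beta=\tfrac{1-\alpha}2$ is Hölder only of order $\tfrac{\alpha-1}{2}$, so pathwise uniqueness for it cannot be read off from Lipschitz theory and must be proved by delicate moment estimates (building on heat-kernel bounds for the stable process and Kunita-type inequalities) designed to close exactly down to $\beta>\tfrac{1-\alpha}2$; this is the crux of the argument.
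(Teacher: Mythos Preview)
Your overall architecture---build a Zvonkin map from the resolvent equation $\lambda u-\mathcal{L}^\alpha u=(1+u')b$ (equivalently $\lambda u-\mathcal{L}_\alpha u-bu'=b$), transform to a distribution-free equation, and deduce both existence and uniqueness from properties of the transformed equation---matches the paper exactly, and you have correctly identified where the hypothesis $\beta>\tfrac{1-\alpha}{2}$ enters (well-posedness of the product $bu'$) and why $\phi=\mathrm{id}+u$ is a $C^1$ diffeomorphism.

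There is, however, a genuine gap in your existence argument. You claim that ``a standard stability estimate (Gr\"onwall combined with Kunita's inequality)'' shows $(Y^n)$ is Cauchy. But as you yourself observe two paragraphs later, the jump coefficient of the transformed equation is only H\"older of exponent $(\alpha+\beta-1)\wedge 1$, which near the threshold is barely above $\tfrac{\alpha-1}{2}<1$; a Gr\"onwall/Kunita argument needs a Lipschitz-type bound to close, and none is available here. Concretely, comparing $Y^n$ and $Y^m$ in $L^2$ via It\^o isometry produces on the right-hand side a term $\int_0^t \E|X^n_s-X^m_s|^{2\gamma_1}\,ds$ with $\gamma_1<\alpha/2+\beta$, and since $\alpha/2+\beta$ can be arbitrarily close to $1/2$ this does not feed back into Gr\"onwall. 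So the route to strong existence you sketch does not work as written.

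The paper avoids this trap by \emph{not} attempting to show $(X^n)$ or $(Y^n)$ Cauchy. Instead it proves only \emph{weak} existence: tightness of $(X^n,A^n)$ in the Skorokhod space (which needs only uniform-in-$n$ moment bounds, no Lipschitz control), and identification of a subsequential limit as a weak solution of both the Zvonkin equation and \eqref{mainsde}. Strong existence is then obtained a posteriori from weak existence and pathwise uniqueness via a generalized Yamada--Watanabe principle (Kurtz), which you mention only as optional but is in fact essential. For pathwise uniqueness of the Zvonkin equation the paper uses a Tanaka-type argument with smooth approximations $V_n$ of $|\cdot|$, exploiting exactly the interpolation estimate $|u(x_1+r)-u(x_1)-u(x_2+r)+u(x_2)|\le C|x_1-x_2|^{\gamma_1}|r|^{\gamma_2}$ with $\gamma_1>\tfrac12$ and $\gamma_2>\tfrac{\alpha}{2}$ to make the second-order term vanish as $n\to\infty$; this is the ``delicate moment estimate'' you allude to but do not specify. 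Finally, the step ``any solution of \eqref{mainsde} solves the Zvonkin equation'' is more delicate than you suggest: since $X$ is not a semimartingale and $\phi\in\C^{\alpha+\beta}$ with $\alpha+\beta$ possibly $<2$, the paper develops and uses an It\^o formula for Dirichlet processes together with an $\SL_h$-theory of integrals $\int f(X_s)\,dA_s$ (a stochastic Young-type bound) to justify passing to the limit in $\int u_n'(X_s)\,dA_s-\int u_n'(X_s)b_n(X_s)\,ds\to 0$.
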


Let us give a few comments about the above result. As mentioned earlier, the
case of $\beta>0$ was resolved in Tanaka, Tsuchiya,
Watanabe~\cite{bib:ttw74}. Thus the main result of this paper is
for the case $\beta\in ( \frac{1-\alpha}{2}, 0]$. Though we do not
 consider the case $\alpha =2$ (that is,  when $L$ is replaced by the standard
 Brownian motion $B$), our proof can be suitably modified to show that Theorem \ref{T:1}
 holds for case $\alpha =2$ ( with an appropriate replacement of $L$ by $B$ in Definition~\ref{D:sol}). We note that in this case the result  would be less restrictive than the corresponding result in \cite{BC}. Indeed, we 
 allow $(b_n)_{n\in\Z_+}$ to be an arbitrary sequence of smooth functions approximating $b$, whereas  \cite{BC} imposes  extra conditions on regularity of $A^n$ (cf. Definition~\ref{D:sol} and \cite[condition (iii) of Definition~2.1]{BC}).
 
  Note also that if $\beta<0$, then the upper bound for the H\"older exponent in condition~3 of the definition of the solution is less than $1$. Thus the process $A$ might be of infinite variation and $X$ might not be a semimartingale. On the other hand, if $\beta>1/2-\alpha/2$, then $1+\beta/\alpha>1/2$ and thus condition \eqref{3cond} holds with $\kappa>1/2$. This implies that the quadratic variation of $A$ is $0$.

\begin{Remark} We note that in our setting condition 2  of  Definition~\ref{D:sol} follows from a weaker condition.
 Let $ X=x+ A+L$ be a c\`adl\`ag adapted process that satisfies
 conditions 1 and 3 of Definition~\ref{D:sol} and assume
 that (2.1) holds only for
 {\em a particular} sequence of smooth functions $(\wt b_n)_{n\in\Z_+}$
 converging to $b$ in $\C^\beta$. Then, under the hypothesis of
 Theorem~\ref{T:1}, it follows from our proofs that    
\eqref{sxodka} holds also for  {\em any other} smooth sequence $(b_n)_{n\in\Z_+}$ converging to $b$ in $\C^\beta$.
\end{Remark}

The proof of {Theorem}~\ref{T:1} consists of two parts: namely, existence and
uniqueness. Usually proving existence is an ``easy'' part of this type of theorems. Indeed, in
the case when the coefficients in the stochastic equation are sufficiently
regular, it is possible to directly show strong existence. For
example, for equation~\eqref{mainsde} when $b$ is a bounded continuous
function strong existence follows via a simple compactness argument (see the 
comment before \cite[Lemma~4.1]{Pr12}). However, for the equations with a generalized drift the situation is much more complicated, since even the notion of a solution should be defined very carefully.

In the intermediate steps of our proof  we will use additionally the notion of a {\it virtual solution}, which has been introduced recently for related equations with distributional drift
(see \cite[Definition 25]{bib:fir17}). The notion is based on applying
a Zvonkin type transformation to the equation of interest and
obtaining a ``transformed'' equation where the drift is more regular. 
The broad strategy of the proof then involves
showing existence and uniqueness for the ``transformed'' equation;
its solution is called a ``virtual solution''. However it is not obvious at all how to identify the concept
of a solution to (1.2) with the virtual solution. Recently it was shown in~\cite{bib:zz17} for some
multidimensional equations driven by the Brownian motions that
the virtual solutions are solutions to the martingale problem associated with
the original equation.

It is technically challenging to carry out the above program for
proving Theorem~\ref{T:1}; in particular, as mentioned before, the solution
$X=(X_t)_{t \geq 0}$ will not be a semimartingale. So the classical tools
and methods will not be applicable. The novelty of our approach is in working
with the correct notion of a (natural) solution, a suitable adaptation
of the transformation, along with a specific technique for the identification of solutions and virtual solutions. 
We provide more details in the next subsection.

\subsection{Overview of the proof of {Theorem}~\ref{T:1}} \label{sec:overview}

The proof of Theorem~\ref{T:1} consists of a number of steps. To ease the comprehension of the
proof and for the convenience of the reader, we provide the following road map of the proof.

\begin{Assumption}
For the rest of the paper we fix $\alpha\in(1,2)$, $\beta\in(\frac{1-\alpha}{2},0)$, $b\in\C^{\beta}$, the initial
condition $x \in \R$ and the length of the time interval $T>0$.
\end{Assumption}
Our goal is to show that for the parameters chosen above on the time
interval $[0,T]$ stochastic differential equation \eqref{mainsde}
has a unique strong solution. Note also that we do not lose the
generality by choosing $\beta<0$, since if we prove existence and
uniqueness for any $b\in \C^{\beta_0}$, then it also holds for any
$b\in \C^{\beta}$ with $\beta\ge \beta_0$.

As mentioned earlier in the introduction, to study \eqref{mainsde} we use a new version of the drift--transformation method; the original method dates back to Zvonkin \cite{zvonkin74} and Veretennikov \cite{ver80}. Heuristically, the main idea of the method can be formulated as follows: SDE \eqref{mainsde} has a ``very bad'' drift (recall that $b$ is not even a function, but just a distribution) but relatively ``good'' diffusion. Therefore one can make the following trade-off. With the help of a certain auxiliary function $u\colon\R_+\times\R\to\R$, one can consider the process $Y_t:=u(t,X_t)$, $t\ge0$. This process satisfies a new SDE (which we will call the Zvonkin equation) with better drift and worse (though still not ``too bad'') diffusion. If one can prove that this Zvonkin SDE has a unique strong solution and the function $u$ is ``nice'', then this would imply that the original SDE \eqref{mainsde} also has a unique strong solution.

Note that in our case implementing the above algorithm is very tricky.
It is rather hard to show directly the strong existence of the solutions (even)
to the Zvonkin equation. Thus, we take the following route. First we derive the Zvonkin equation and construct a weak solution to it.
 Then we use this solution to construct a weak solution to \eqref{mainsde}. After it we show that
strong uniqueness holds for the Zvonkin equation (and hence for \eqref{mainsde}). Finally we apply a
generalized version of the classical Yamada-Watanabe theorem (see
\cite{K14}) to establish strong existence for \eqref{mainsde}.

Thus, everything depends on the choice of the transformation function
$u$. In the original papers \cite{zvonkin74} and \cite{ver80} the
function $u$ was a solution of a certain partial differential
equation. Motivated by \cite{FGP}, Priola \cite{Pr12} suggested to
take a different $u$, which arises from a family of resolvent
equations. We further develop Priola's approach to accommodate distributional drift.

To present the equation on $u$ we need to recall a couple of notions.

\begin{Definition} \label{gensemgp} Let $\{P_t\}_{t \geq 0}$ be the Markov semigroup associated with the symmetric one-dimensional $\alpha$-stable process $L$. Let $\OL_\alpha$, be the infinitesimal generator of $\{P_t\}_{t \geq 0}$ with domain ${\mathcal D}(\OL_\alpha)$. 
\end{Definition}
It is well known that $\OL_\alpha$ is the fractional Laplace operator $-(-\Delta)^{\alpha/2}$. Further, $\S\subset {\mathcal D}(\OL_\alpha)$ and 
if $f\in \S$, then
$$\OL_\alpha f(x)= \int_\R \bigl(f(x+y)-f(x)-yf'(x)\I_{|y|\le1}\bigr) |y|^{-1-\alpha}\, dy.$$
We extend the definition of $\OL_\alpha$ to the space of all Schwarz distributions in the standard way. Namely,
for $f\in\S'$ we set 
$$ \langle \OL_\alpha f,\phi\rangle:=\langle f,\OL_\alpha\phi\rangle,\quad\phi\in\S. $$
 
We will be also dealing with products of a function and a distribution. In this regard, let us
recall that if $f\in\C^{\gamma_1}$ and $g\in\C^{\gamma_2}$, where $\gamma_1,\gamma_2\in\R$ and $\gamma_1+\gamma_2>0$, then the product $fg$ is well defined as a distribution. More
precisely, the map $(f,g)\to f g $ extends to a continuous bilinear map from $\C^{\gamma_1}\times\C^{\gamma_2}\to \C^{\gamma_1\wedge\gamma_2}$, see, e.g., \cite[Corollary 1]{GP15}.

Now we can present the equation on the transformation function $u$. We consider the following equation
\begin{equation}\label{me}
\lambda u -\OL_{\alpha} u - f u'= g,
\end{equation}
where $\lambda>0$, $f,g\in\C^\eta$, $\eta \in \R$. We understand this equation in the distributional sense: we say that $u \in \C^\gamma$ is a solution to \eqref{me} if $\gamma>1-\eta$ and for any $\phi\in\S$
\begin{equation*}
{\langle \lambda u -\OL_{\alpha} u - f u^\prime, \phi \rangle = \langle g,\phi\rangle.}
\end{equation*}
We note that the term $fu^\prime$ above involves the product of a
function and a distribution. However, thanks to our additional
assumption $\gamma>1-\eta$ and the explanations above, this product is well--defined.

Clearly, for $\eta >0$ and $\gamma > \alpha$  equation \eqref{me} can be interpreted pointwise.

 We will call \eqref{me} \textit{the
resolvent equation} and we are going to use it extensively throughout
the proof. In different parts of the proof we will be substituting $f$
and $g$ by the drift $b$, its smooth approximations $b_n$ or sometimes
just by $0$. For brevity, we will say $u^{\lambda}_{f,g}$ solves
\eqref{me} to imply that $u^{\lambda}_{f,g}$ is a solution to
\eqref{me} with the parameters $\lambda$, $f$ and $g$. However, if it
is clear from the context, we may drop the additional indices.

Note that the difference of our approach and \cite{Pr12} is
that we allow $f$ and $g$ in \eqref{me} to be distributions (and not
just regular functions). It will make establishing corresponding
estimates much more trickier; on the other hand it will allow us to
deal with the distributional drift in our main SDE
\eqref{mainsde}.

 Our first step is to show that the resolvent
 equation \eqref{me} is actually well--defined. That is, it has a
 unique solution with prescribed regularity and possesses a continuity
 property.

\begin{Proposition} \label{p:resolvent}
For any $\eta> \frac12-\frac\alpha2$ and $M >0$ there exists $\lambda_0=\lambda_0(\eta,M)$ such that for any $\lambda\ge\lambda_0$ and any $f,g\in\C^\eta$ with $\|f\|_\eta \le M$
the following holds:
 \begin{enumerate}
 \item[{\rm(i)}] there exists a unique solution, $u^\lambda_{f,g}$ to \eqref{me} in class $\C^{\frac{1+\alpha}{2}}$. Furthermore, for each $\gamma\in[0\vee\eta,\alpha+\eta)$ we have  $u^{\lambda}_{f,g}\in\C^{\gamma}$ and there exists a constant $C=C(\eta,\gamma)>0$ such that
\begin{equation}\label{est}
\|u^{\lambda}_{f,g} \|_{\gamma} \le C \,\lambda^{-1-\frac\eta\alpha+\frac\gamma\alpha} \,\|g\|_\eta\,(1+ \|f\|_\eta);
\end{equation}

\item[{\rm(ii)}] for any sequences of functions $(f_n)_{n\in\Z_+}$, $(g_n)_{n\in\Z_+}$ such that
$\|f_n\|_{\eta}\le M$, $n\in\Z_+$ and
$\|f_n-f\|_{\eta}\to0$, $\|g_n-g\|_{\eta}\to0$ as $n\to\infty$ we have
\begin{equation*}
\|u^\lambda_{f_n, g_n}-u^\lambda_{f,g}\|_{(1+\alpha)/2}\to0,\,\,\text{ as $n\to\infty$}.
\end{equation*}
\end{enumerate}
\end{Proposition}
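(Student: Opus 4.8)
The plan is to rewrite \eqref{me} as a fixed-point equation for the resolvent operator of $\OL_\alpha$, solve it by a contraction argument in the single space $\C^{(1+\alpha)/2}$, then bootstrap to recover \eqref{est} for the whole range of $\gamma$, and finally read off part (ii) from the same estimates.

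The first step is to set up the resolvent. Let $\{P_t\}_{t\ge0}$ be the $\alpha$-stable semigroup and put $R_\lambda:=\int_0^\infty e^{-\lambda t}P_t\,dt=(\lambda-\OL_\alpha)^{-1}$. The key analytic input is the smoothing estimate $\|P_th\|_{\mu+\theta}\le C t^{-\theta/\alpha}\|h\|_\mu$, valid for every $\mu\in\R$, $\theta\ge0$, $t>0$ (this follows from the scaling of the stable kernel together with standard Littlewood--Paley bounds). Integrating against $e^{-\lambda t}$ yields, for every $\mu\in\R$ and $\theta\in[0,\alpha)$,
\begin{equation*}
\|R_\lambda h\|_{\mu+\theta}\le C(\theta)\,\lambda^{-1+\theta/\alpha}\,\|h\|_\mu .
\end{equation*}
One checks in the usual way (on the Fourier/Littlewood--Paley side) that $R_\lambda$ is a two-sided inverse of $\lambda-\OL_\alpha$ on $\S'$; consequently, as soon as $\gamma>1-\eta$ (so that $fu'$ is a well-defined element of $\C^{\eta\wedge(\gamma-1)}$ by the product estimate \cite[Corollary~1]{GP15}), a function $u\in\C^\gamma$ solves \eqref{me} in the distributional sense if and only if $u=R_\lambda(fu'+g)$.

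Now set $\gamma_0:=\tfrac{1+\alpha}2$. The hypothesis $\eta>\tfrac12-\tfrac\alpha2$ is exactly what makes $\gamma_0>1-\eta$ (so $fu'$ is defined) and $\gamma_0<\eta+\alpha$ (so $R_\lambda g\in\C^{\eta+\alpha}\hookrightarrow\C^{\gamma_0}$); one also checks $\eta\wedge(\gamma_0-1)+\alpha\ge\gamma_0$, using $\alpha>1$ in one case and $\eta+\alpha>\gamma_0$ in the other, so that $\mathcal T u:=R_\lambda(fu'+g)$ maps $\C^{\gamma_0}$ into itself. By the product estimate and the displayed bound,
\begin{equation*}
\|\mathcal T u_1-\mathcal T u_2\|_{\gamma_0}=\|R_\lambda\big(f(u_1-u_2)'\big)\|_{\gamma_0}\le C\,\lambda^{-1+(\gamma_0-\eta\wedge(\gamma_0-1))/\alpha}\,\|f\|_\eta\,\|u_1-u_2\|_{\gamma_0},
\end{equation*}
and the exponent of $\lambda$ is strictly negative; hence there is $\lambda_0=\lambda_0(\eta,M)$ (which we take $\ge1$) so that for $\|f\|_\eta\le M$ and $\lambda\ge\lambda_0$ the map $\mathcal T$ is a $\tfrac12$-contraction on $\C^{\gamma_0}$. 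Its unique fixed point is $u^\lambda_{f,g}$, giving existence and uniqueness in $\C^{\gamma_0}$. To obtain \eqref{est}, plug $\gamma=\gamma_0$ into $u=R_\lambda(fu'+g)$ and absorb $\|R_\lambda(fu')\|_{\gamma_0}\le\tfrac12\|u\|_{\gamma_0}$ to get $\|u\|_{\gamma_0}\le C\lambda^{-1-\eta/\alpha+\gamma_0/\alpha}\|g\|_\eta$; then, for $\gamma\in[0\vee\eta,\eta+\alpha)$, apply $\|\cdot\|_\gamma$ to $u=R_\lambda(fu'+g)$, bound $\|R_\lambda g\|_\gamma\le C\lambda^{-1-\eta/\alpha+\gamma/\alpha}\|g\|_\eta$ and $\|R_\lambda(fu')\|_\gamma\le C\lambda^{-1+(\gamma-\eta\wedge(\gamma_0-1))/\alpha}\|f\|_\eta\|u\|_{\gamma_0}$, check that $\gamma-\eta\wedge(\gamma_0-1)<\alpha$ (again using $\eta>\tfrac12-\tfrac\alpha2$), and combine with the bound on $\|u\|_{\gamma_0}$: the total power of $\lambda$ is $\le-1-\eta/\alpha+\gamma/\alpha$ for $\lambda\ge1$, which is \eqref{est}. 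For part (ii), writing $u_n:=u^\lambda_{f_n,g_n}$, $u:=u^\lambda_{f,g}$, $w_n:=u_n-u$ and subtracting the two fixed-point identities gives $w_n=R_\lambda(f_nw_n')+R_\lambda\big((f_n-f)u'\big)+R_\lambda(g_n-g)$; since $\|f_n\|_\eta\le M$, the first term is $\le\tfrac12\|w_n\|_{\gamma_0}$ for $\lambda\ge\lambda_0$, and absorbing it yields $\|w_n\|_{\gamma_0}\le C\|f_n-f\|_\eta\|u\|_{\gamma_0}+C\|g_n-g\|_\eta\to0$, using the product and Schauder estimates and $\gamma_0<\eta+\alpha$.

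The substantive ingredients are the $\lambda$-sharp Schauder estimate for $R_\lambda$ and the paraproduct bound for $fu'$; once these are in hand the argument is a routine contraction-plus-bootstrap. The delicate part is the bookkeeping of regularity indices around $\gamma_0=\tfrac{1+\alpha}2$: one must verify throughout that $\gamma_0>1-\eta$ and that $\gamma-\eta\wedge(\gamma_0-1)<\alpha$, and both inequalities are saturated exactly at $\eta=\tfrac12-\tfrac\alpha2$, which is why this is the threshold appearing in the statement.
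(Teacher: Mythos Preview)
Your approach is sound and rests on the same Schauder estimate for $R_\lambda=(\lambda-\OL_\alpha)^{-1}$ that the paper isolates as Lemma~\ref{L:1}. The structure differs: you run a direct Banach fixed-point argument in $\C^{(1+\alpha)/2}$, uniformly in the sign of $\eta$, whereas the paper splits cases---for $\eta>0$ it quotes Priola \cite[Theorem~3.4]{Pr12} for existence and then bootstraps, while for $\eta\le0$ it approximates $f,g$ by smooth $f_n,g_n$, builds $u_n$ via the positive-$\eta$ case, shows $(u_n)$ is Cauchy in $\C^{(1+\alpha)/2}$ from the resolvent estimate, and passes to the limit. Your route is cleaner and avoids both the external reference and the approximation; the paper's route, on the other hand, is more careful about why $R_\lambda$ really inverts $\lambda-\OL_\alpha$ on bounded functions (the uniqueness half of Lemma~\ref{L:1}, proved via the core property of $\OL_\alpha$), which you should not leave as an unjustified aside.

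One step in your bootstrap needs repair. The claim ``$\gamma-\eta\wedge(\gamma_0-1)<\alpha$'' can fail when $\eta>\gamma_0-1=(\alpha-1)/2$: in that regime $\eta\wedge(\gamma_0-1)=\gamma_0-1$, so the inequality reads $\gamma<(3\alpha-1)/2$, which does not cover the full range $[\eta,\eta+\alpha)$. You must iterate: from $u\in\C^{\gamma_1}$ one has $fu'\in\C^{\eta\wedge(\gamma_1-1)}$ and hence $u\in\C^{\gamma_2}$ for all $\gamma_2<\eta\wedge(\gamma_1-1)+\alpha$, gaining roughly $\alpha-1>0$ each pass until $\gamma_1-1\ge\eta$, after which $fu'\in\C^\eta$ and one more application reaches all $\gamma<\eta+\alpha$. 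The paper carries out exactly this iteration (the argument establishing~\eqref{above}). For $\eta\le(\alpha-1)/2$---in particular for the negative $\eta=\beta$ actually used in the rest of the paper---your single-step bootstrap is already correct.
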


Our next step is to derive the Zvonkin equation, which is more
challenging in our case due to the fact that $b$ is a
distribution. Let $u^{\lambda}_{b}= u^{\lambda}_{b,b}$ be the unique
solution to \eqref{me}, which exists by Proposition~\ref{p:resolvent}.
We would like to apply the Zvonkin--type transform $\phi(x) = x +
u^\lambda_{b}(x)$ to { $X$ solving} \eqref{mainsde}. As mentioned
earlier, a solution to SDE \eqref{mainsde} is not a
semimartingale. Thus we cannot use the standard It\^{o}
formula and have to employ and develop
the theory of Dirichlet processes. Eventually, we derive the following result.

\begin{Proposition} \label{p:vtreal}
Let $\lambda_0=\lambda_0(\beta,2\| b \|_\beta)$ be as in Proposition~\ref{p:resolvent} and l $\lambda \ge\lambda_0$. Let $X=(X_t)_{t\in[0,T]}$ be a weak solution to \eqref{mainsde} and $u^{\lambda}_{b}=u^{\lambda}_{b,b}$ be the unique solution to \eqref{me}. Then for any $t\in[0,T]$
\begin{equation} \label{usde}
 u^\lambda_{b}(X_t)+ X_t= u^\lambda_{b}(x)+x+ \lambda \int_0^t u^\lambda_{b}(X_s) ds + \int_0^t\int_\R [ u^\lambda_{b}(X_{s-} + r) - u^\lambda_{b}(X_{s-})]  \wt N(ds, dr)+L_t.
\end{equation}
\end{Proposition}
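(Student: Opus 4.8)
\textbf{Proof plan for Proposition~\ref{p:vtreal}.}

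The plan is to establish an It\^o-type change of variables formula for the function $\phi(t,y) = y + u^\lambda_b(y)$ applied to the weak solution $X_t = x + A_t + L_t$, where $A$ is the continuous drift process from Definition~\ref{D:sol}. Since $X$ is not a semimartingale (the process $A$ may have infinite variation), the classical It\^o formula is unavailable, and the key device is to first prove the identity for the smooth approximations and then pass to the limit. Concretely, I would fix a sequence $b_n \in \C^\infty_b$ with $\|b_n - b\|_\beta \to 0$ and $\|b_n\|_\beta \le 2\|b\|_\beta$ for all $n$ (the latter normalization is possible for $n$ large), and let $u_n := u^\lambda_{b_n, b_n}$ be the solution to the resolvent equation \eqref{me} with $f = g = b_n$; by Proposition~\ref{p:resolvent}(i) with $\eta = \beta$, each $u_n$ lies in $\C^\gamma$ for every $\gamma < \alpha + \beta$, and since $\alpha + \beta > \alpha + \frac{1-\alpha}{2} = \frac{1+\alpha}{2} > 1$, in fact $u_n \in \C^1$ (indeed $\C^{1+\epsilon}$ for small $\epsilon>0$), so $u_n$ is a genuine $C^1$ function.

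The second step is to approximate $X$ itself. Let $X^n$ solve the SDE with smooth drift $b_n$, i.e.\ $X^n_t = x + \int_0^t b_n(X^n_s)\,ds + L_t$, which has a classical strong solution by \cite{bib:ttw74}. For $X^n$, which \emph{is} a semimartingale, I would apply the It\^o formula for jump processes to $u_n(X^n_t)$: since $u_n$ is $C^1$ but not necessarily $C^2$, and $X^n$ has a continuous finite-variation part plus the pure-jump martingale $L$, the relevant formula is
\begin{equation*}
u_n(X^n_t) = u_n(x) + \int_0^t u_n'(X^n_{s-})\,b_n(X^n_s)\,ds + \int_0^t\!\!\int_\R [u_n(X^n_{s-}+r) - u_n(X^n_{s-})]\,\wt N(ds,dr) + I^n_t,
\end{equation*}
where $I^n_t$ collects the compensator term $\int_0^t \int_\R [u_n(X^n_{s-}+r) - u_n(X^n_{s-}) - r u_n'(X^n_{s-})\I_{|r|\le 1}]\,\nu(dr)\,ds$ with $\nu(dr) = c_\alpha |r|^{-1-\alpha}dr$; this last integral is precisely $\int_0^t \OL_\alpha u_n(X^n_s)\,ds$. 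Now I invoke the resolvent equation $\OL_\alpha u_n = \lambda u_n - b_n u_n' - b_n$ (valid pointwise since here all objects are functions, by the remark after \eqref{me}), which gives $\int_0^t \OL_\alpha u_n(X^n_s)\,ds = \lambda\int_0^t u_n(X^n_s)\,ds - \int_0^t (b_n u_n')(X^n_s)\,ds - \int_0^t b_n(X^n_s)\,ds$. Substituting, the two $\int_0^t u_n'(X^n_s)b_n(X^n_s)\,ds$ terms cancel, and adding $X^n_t = x + \int_0^t b_n(X^n_s)\,ds + L_t$ to both sides of the resulting identity makes the remaining $\int_0^t b_n(X^n_s)\,ds$ cancel as well, yielding exactly \eqref{usde} with $u^\lambda_b$, $X$ replaced by $u_n$, $X^n$. (Alternatively, and perhaps more robustly, one applies the change-of-variables formula directly to $u_n(X_t)$ for the \emph{fixed} weak solution $X$: here one uses that $A$ has zero quadratic variation when $\beta > \frac12 - \frac\alpha2$ — noted in the text — so $X$ is a Dirichlet process and a Dirichlet-process It\^o formula for the $C^1$ function $u_n$ applies, again producing the compensator term $\int_0^t \OL_\alpha u_n(X_s)\,ds$ plus $\int_0^t u_n'(X_s)\,dA_s$; then \eqref{sxodka} with drift $b_n u_n'$ and the resolvent identity do the cancellation.)

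The third and final step is the limit $n \to \infty$. I would send $b_n \to b$ in $\C^\beta$ and use Proposition~\ref{p:resolvent}(ii) to get $\|u_n - u^\lambda_b\|_{(1+\alpha)/2} \to 0$, together with the stability of weak solutions (or, in the Dirichlet-process version, just working with the single fixed $X$) to pass to the limit term by term: the terms $u_n(X_t)$, $u_n(x)$, and $\lambda\int_0^t u_n(X_s)\,ds$ converge by uniform convergence of $u_n$ and boundedness in sup-norm; the stochastic integral $\int_0^t\int_\R[u_n(X_{s-}+r)-u_n(X_{s-})]\,\wt N(ds,dr)$ converges in $L^2$ (or in probability, uniformly on $[0,T]$) because the integrand converges and is dominated — here one estimates the $L^2$-norm of the difference using the isometry, the Lipschitz/H\"older bound $|u_n(y+r)-u_n(y) - (u^\lambda_b(y+r) - u^\lambda_b(y))| \le \|u_n - u^\lambda_b\|_{(1+\alpha)/2} (|r| \wedge 1)^{(1+\alpha)/2}$ (valid since $(1+\alpha)/2 \le 1$), which is square-integrable against $\nu$ precisely because $2\cdot\frac{1+\alpha}{2} = 1 + \alpha > \alpha$ near $0$ and $(1+\alpha)/2 > 0$ handles the large-$r$ part; and $L_t$ is unchanged. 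I expect the main obstacle to be the rigorous justification of the change-of-variables formula in step two: either controlling the It\^o formula for the merely-$C^1$ function $u_n$ against the jump noise (one may need to first use $C^2$ mollifications $u_{n,\delta}$, apply the standard jump It\^o formula, and let $\delta \to 0$ using that $\OL_\alpha u_{n,\delta} \to \OL_\alpha u_n$ in a suitable sense and that $u_{n,\delta}' \to u_n'$ uniformly on compacts), or, in the Dirichlet-process route, setting up the It\^o formula for Dirichlet processes driven by $\alpha$-stable noise — which is where the hypothesis $\beta > \frac12 - \frac\alpha2$ (ensuring zero quadratic variation of $A$) is genuinely used and where most of the technical work lies.
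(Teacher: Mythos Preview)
Your Route~A (apply It\^o to $u_n(X^n_t)$ for the approximating solutions $X^n$ and then pass to the limit) does not prove the proposition as stated: the claim is about a \emph{given} weak solution $X$, and at this stage of the paper uniqueness has not been established, so there is no reason the $X^n$ should converge to your fixed $X$. The paper does use that computation elsewhere (Lemma~\ref{L:apseq}, for the weak-existence proof), but not here.

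Your Route~B is the correct one and matches the paper's approach, but you have mislocated the main difficulty. The Dirichlet-process It\^o formula of \cite{CJMS} applies off the shelf to $u_n\in\C^\infty_b$ (in fact $u_n$ is smooth, not just $C^{1+\varepsilon}$, since $b_n\in\C^\infty_b$ lies in every $\C^\eta$ and Proposition~\ref{p:resolvent}(i) then gives $u_n\in\C^\gamma$ for all $\gamma$). After using the resolvent identity you arrive at
\[
u_n(X_t)=u_n(x)+\lambda\!\int_0^t\! u_n(X_s)\,ds-\!\int_0^t\! b_n(X_s)\,ds+\text{(stoch.\ int.)}+\Bigl[\int_0^t\! u_n'(X_s)\,dA_s-\!\int_0^t\! u_n'(X_s)b_n(X_s)\,ds\Bigr].
\]
The real crux is showing that the bracketed term tends to $0$ in probability. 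Your sentence ``\eqref{sxodka} with drift $b_nu_n'$ \ldots\ do the cancellation'' does not justify this: \eqref{sxodka} only asserts $\int_0^t b_n(X_s)\,ds\to A_t$, and says nothing about $\int_0^t f_n(X_s)b_n(X_s)\,ds$ converging to the Dirichlet integral $\int_0^t f_n(X_s)\,dA_s$ (the latter is defined as a limit of forward Riemann sums against a zero-energy process, so this is a genuinely nontrivial stability statement). The paper handles exactly this point via a Young-integration-type argument: Lemma~\ref{L:mainyoung} gives $\SL_h$ bounds on the Riemann-sum remainders uniformly in $n$, and Lemma~\ref{L:poslednijboj} upgrades the pointwise convergence $A^n_t\to A_t$ to the desired convergence of the weighted integrals. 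To feed these lemmas one needs uniform-in-$n$ moment bounds $\E|A^n_t-A^n_s|^p\le C|t-s|^{p\kappa}$ with $\kappa$ close to $1+\beta/\alpha$ (Lemma~\ref{L:smooth}), and H\"older-in-time bounds on $u_n'(X_\cdot)$ via $\E|X_t-X_s|^\gamma\le C|t-s|^{\gamma/\alpha}$ (Lemma~\ref{L:Xmoments}); the condition $\beta>\tfrac{1-\alpha}{2}$ is what makes the two H\"older exponents sum to more than $1$ so that the Young-type estimate closes. This is where condition~(3) of Definition~\ref{D:sol} is genuinely used, and it is the step your outline is missing.
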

As mentioned before, we will call the SDE \eqref{usde} the Zvonkin equation. Note that: first, all the terms in the Zvonkin equation make sense; second, \eqref{usde} does not have any distributional drift term like \eqref{mainsde}; and finally, if $\lambda$ is very large but still finite, then $u^\lambda_{b}$ is very close to zero and thus the only term with $X_t$ that will not disappear in \eqref{usde} (apart from $X_t$ itself) is $\lambda \int_0^t u^\lambda_{b}(X_s) ds$, which is smooth in $t$ and behaves ``nicely''.

To show existence of a weak solution for (\ref{mainsde}) and \eqref{usde}, we
construct an approximating sequence. Let $(b_n)_{n\in\Z_+}$ be a sequence of functions in $\C^\infty_b$ such that $b_n$
converges to $b$ in $\C^\beta$ and $\|b_n\|_\beta\le 2 \|b\|_\beta$.
Let $X^n=(X_t^n)_{t\in[0,T]}$, $n\in\Z_+$ be the strong solution to the
following stochastic differential equation:
\begin{equation}\label{appr}
X^n_t=x+ \int_0^tb_n(X^n_s)\,ds + L_t,\quad t\in[0,T]
\end{equation}
and put $A^n=(A_t^n)_{t\in[0,T]}$, $n\in\Z_+$
\begin{equation}\label{Ant}
A^n_t=\int_0^tb_n(X^n_s)\,ds,\quad t\in[0,T].
\end{equation}
The strong existence and uniqueness for (\ref{appr}) is well known
(see for e.g., \cite[Theorem~6.2.3]{Apple}). To show tightness and
subsequential limit of the above sequence, we will use the Zvonkin
transformation. We obtain the following result. Let Skorokhod space
$\D_\R[0,T]$ be the space of all c\`adl\`ag functions from $[0,T]$ to $\R$.

\begin{Proposition}\label{p:wex} Let $\lambda_0=\lambda_0(\beta,2\| b \|_\beta)$ be as in Proposition~\ref{p:resolvent} and $\lambda \ge\lambda_0$. Let  $u^{\lambda}_{b}=u^{\lambda}_{b,b}$ be the unique solution to \eqref{me} and let $(X^n,A^n)$ be defined as above. Then there exists a subsequence $n_k$ such that $(X^{n_k},A^{n_k})$ converges weakly to $(X,A)$ in $\D_\R[0,T]$. Further,
 \begin{enumerate}
 \item[{\rm(i)}] $X$ is a weak solution of the Zvonkin equation \eqref{usde}.
  \item[{\rm(ii)}] $X$ is a weak solution to stochastic differential equation \eqref{mainsde}.
\end{enumerate}
\end{Proposition}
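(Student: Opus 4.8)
The plan is to prove tightness of the sequence $(X^n, A^n)$ by passing through the Zvonkin transformation, then identify the subsequential limit as a weak solution of \eqref{usde} and finally of \eqref{mainsde}.

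\textbf{Step 1: Tightness via the Zvonkin transform.} Fix $\lambda\ge\lambda_0$ and set $\phi(x)=x+u^\lambda_b(x)$, $Y^n_t:=\phi(X^n_t)=X^n_t+u^\lambda_b(X^n_t)$. Since $X^n$ is a genuine semimartingale (the drift $b_n$ is smooth), the classical It\^o formula applies to $u^\lambda_b\in\C^{(1+\alpha)/2}$ composed with $X^n$; the resolvent identity \eqref{me} with $f=g=b_n$ is designed precisely so that the singular part of the drift cancels, giving
\begin{equation*}
Y^n_t = \phi(x) + \lambda\int_0^t u^\lambda_{b}(X^n_s)\,ds + \int_0^t\!\!\int_\R\bigl[u^\lambda_b(X^n_{s-}+r)-u^\lambda_b(X^n_{s-})\bigr]\wt N(ds,dr) + L_t + E^n_t,
\end{equation*}
where $E^n_t$ is an error term coming from replacing $u^\lambda_{b_n}$ by $u^\lambda_b$, controlled by $\|u^\lambda_{b_n}-u^\lambda_b\|_{(1+\alpha)/2}\to0$ from Proposition~\ref{p:resolvent}(ii). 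The first term is Lipschitz in $t$ uniformly in $n$ (since $\|u^\lambda_b\|_\infty<\infty$), the stochastic integral is a martingale with jump intensity controlled by $\|u^\lambda_b\|_\infty$, and $L$ is fixed. Hence $(Y^n)$ is tight in $\D_\R[0,T]$ by Aldous' criterion. Because $\phi$ is a bijection with $\phi^{-1}$ Lipschitz (for $\lambda$ large $u^\lambda_b$ has small $\C^1$-norm, so $\phi'$ is bounded away from $0$), tightness of $Y^n$ transfers to tightness of $X^n=\phi^{-1}(Y^n)$. Finally $A^n_t=X^n_t-x-L_t$, so tightness of $X^n$ (together with the continuity modulus estimate \eqref{3cond} for $A^n$, which follows from Proposition~\ref{p:resolvent} applied to the approximating equation) yields tightness of $(X^n,A^n)$, and joint tightness with $L$.

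\textbf{Step 2: Identification as a weak solution of \eqref{usde}.} Along a subsequence $n_k$, $(X^{n_k},A^{n_k},L)\Rightarrow(X,A,L)$. By Skorokhod representation we may assume a.s.\ convergence in $\D_\R[0,T]$ on a common probability space. I would pass to the limit in the Zvonkin equation for $X^{n_k}$: the drift term $\lambda\int_0^t u^\lambda_b(X^{n_k}_s)\,ds\to\lambda\int_0^t u^\lambda_b(X_s)\,ds$ by continuity of $u^\lambda_b$ and bounded convergence (care needed at jump times of $X$, which form a null set w.r.t.\ $ds$); the martingale term converges by standard arguments for stochastic integrals against compensated Poisson measures (e.g.\ via the martingale problem / generator characterization, using that $r\mapsto u^\lambda_b(y+r)-u^\lambda_b(y)$ is bounded and the jump measure $|r|^{-1-\alpha}dr$ is fixed); and $E^{n_k}_t\to0$. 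This shows $X$ solves \eqref{usde} in the weak sense, proving (i).

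\textbf{Step 3: From \eqref{usde} back to \eqref{mainsde}.} Here I would invert the transformation: applying (a suitable extension of) the It\^o/Dirichlet-process change of variables to $\phi^{-1}$ and the solution $X$ of \eqref{usde} should recover $X_t=x+A_t+L_t$ with $A_t=\lim_n\int_0^t b_n(X_s)\,ds$. Concretely, one checks that $A_t := X_t - x - L_t$ satisfies conditions 1--3 of Definition~\ref{D:sol}: condition 1 is immediate; condition 3 follows from the estimates of Proposition~\ref{p:resolvent} transported through the limit; and condition 2 — that $\int_0^t b_n(X_s)\,ds\to A_t$ for \emph{every} approximating sequence — is the delicate point, requiring one to show $\int_0^t b_n(X_s)\,ds$ converges and to identify the limit independently of the sequence, which one does by writing $\int_0^t b_n(X_s)\,ds$ in terms of $u^\lambda_{b_n}(X_t)$ via the resolvent equation and using Proposition~\ref{p:resolvent}(ii) together with the continuity of $X\mapsto u^\lambda_{b_n}(X_\cdot)$.

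\textbf{Main obstacle.} The hard part is Step 3, specifically verifying condition 2 of Definition~\ref{D:sol}: since $X$ is not a semimartingale, one cannot manipulate $\int_0^t b_n(X_s)\,ds$ directly, and the argument must route through the resolvent equation and the Dirichlet-process It\^o formula to express this additive functional in terms of $u^\lambda_{b_n}(X)$ and a martingale, then invoke the continuity estimate of Proposition~\ref{p:resolvent}(ii) to pass $n\to\infty$ uniformly. Handling the jump terms of $L$ when composing with the non-smooth $u^\lambda_b$, and controlling the error terms $E^n$ uniformly in $n$, are the technical crux.
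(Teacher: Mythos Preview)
Your overall architecture is right, but Step~1 is muddled and Step~3 has a real gap.

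\textbf{Tightness.} You write that ``the classical It\^o formula applies to $u^\lambda_b\in\C^{(1+\alpha)/2}$'' and that the resolvent identity ``with $f=g=b_n$'' produces the cancellation; but $u^\lambda_b$ is not $\C^2$, and the resolvent equation it satisfies involves the \emph{distribution} $b$, not $b_n$, so neither claim is correct as stated. The paper avoids both issues by applying It\^o to the smooth $u_n^\lambda:=u^\lambda_{b_n,b_n}$ composed with $X^n$. More importantly, the paper does not prove tightness of $Y^n=\phi(X^n)$ and pull back through $\phi^{-1}$; instead it uses the Zvonkin identity for $X^n$ to bound $|A^n_{\tau+\delta}-A^n_\tau|$ directly, with the key device of letting $\lambda$ \emph{vary} with the time increment, $\lambda=\delta^{-1}$. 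This yields the quantitative estimate $\E|A^n_{\tau+\delta}-A^n_\tau|^2\le C\delta^{2(1+\beta/\alpha-\eps)}$, uniformly in $n$, which is what (via Fatou) gives condition~3 of Definition~\ref{D:sol} for the limit $A$. Your fixed-$\lambda$ route can deliver Aldous tightness but not this exponent.

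\textbf{Condition~2 of Definition~\ref{D:sol}.} Here your plan is circular. You propose to apply a Dirichlet-process It\^o formula to $\phi^{-1}$ and $X$, or to express $\int_0^t b_m(X_s)\,ds$ via $u^\lambda_{b_m}(X)$; but at this point $X$ is only known to solve \eqref{usde}, and it is not yet a Dirichlet process --- the zero-energy property of $A$ is precisely part of what has to be established. The paper never applies any It\^o-type formula to the limit $X$. Instead it introduces the \emph{two-parameter} resolvent $u^{n,m}:=u^\lambda_{b_n,b_m}$ and applies classical It\^o to $u^{n,m}(\wh X^n)$ (legitimate: $\wh X^n$ is a semimartingale, $u^{n,m}\in\C^\infty_b$), obtaining an identity that contains $\int_0^t b_m(\wh X^n_s)\,ds$. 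Passing $n\to\infty$ with $m$ fixed (using $u^{n,m}\to u^{(m)}:=u^\lambda_{b,b_m}$ from Proposition~\ref{p:resolvent}(ii)) gives an identity for $u^{(m)}(\wh X)$ involving $\int_0^t b_m(\wh X_s)\,ds$; subtracting the already-proved Zvonkin identity \eqref{usde} then shows $\bigl\|\int_0^\cdot b_m(\wh X_s)\,ds-\wh A\bigr\|$ is controlled by $\|u^{(m)}-u^\lambda_b\|_{(1+\alpha)/2}\to0$ as $m\to\infty$. The missing idea in your sketch is this decoupling of the two indices: one index ($n$) tracks the approximating process, the other ($m$) tracks the test drift, and all calculus is done on the semimartingales $\wh X^n$, never on $\wh X$ itself.
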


Our final ingredient is to establish pathwise uniqueness for
(\ref{usde}).
\begin{Proposition}\label{p:pu}
There exists $\lambda_1=\lambda_1(\beta,\| b \|_\beta)$ such that for any $\lambda >\lambda_1$ the Zvonkin equation \eqref{usde} has a pathwise unique solution.
\end{Proposition}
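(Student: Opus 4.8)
The plan is to prove pathwise uniqueness for the Zvonkin equation \eqref{usde} by a Gronwall-type argument on the difference of two solutions. Suppose $X$ and $\wt X$ are two weak solutions of \eqref{usde} driven by the same $\alpha$-stable process $L$ (equivalently, the same Poisson random measure $N$ with compensator $\wt N$) on a common probability space, with the same initial condition $x$. Set $\phi^\lambda(y) := y + u^\lambda_b(y)$; by Proposition~\ref{p:resolvent}(i) applied with $\eta=\beta$, $f=g=b$ and $\gamma = \frac{1+\alpha}{2} > 1$, the function $u^\lambda_b$ lies in $\C^{(1+\alpha)/2}$, so $\phi^\lambda$ is $C^1$ with $(\phi^\lambda)'$ being $\frac{\alpha-1}{2}$-H\"older. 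Moreover, since $\|u^\lambda_b\|_{(1+\alpha)/2}\le C\lambda^{-1-\beta/\alpha+(1+\alpha)/(2\alpha)}\|b\|_\beta(1+\|b\|_\beta)\to 0$ as $\lambda\to\infty$, by choosing $\lambda_1$ large enough we can guarantee $\|(u^\lambda_b)'\|_\infty \le \tfrac12$, so that $\phi^\lambda$ is a bi-Lipschitz bijection of $\R$ onto $\R$ with Lipschitz constants of $\phi^\lambda$ and $(\phi^\lambda)^{-1}$ bounded by $2$ and $\tfrac32$. Consequently $\P(X_t=\wt X_t\ \forall t)=1$ is equivalent to $\P(\phi^\lambda(X_t)=\phi^\lambda(\wt X_t)\ \forall t)=1$.

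Next I would rewrite \eqref{usde} as an equation for $Y_t := \phi^\lambda(X_t) = u^\lambda_b(X_t)+X_t$. From \eqref{usde}, $Y$ satisfies
\begin{equation*}
Y_t = \phi^\lambda(x) + \lambda\int_0^t u^\lambda_b(X_s)\,ds + \int_0^t\int_\R\bigl[\phi^\lambda(X_{s-}+r)-\phi^\lambda(X_{s-})\bigr]\wt N(ds,dr),
\end{equation*}
where I have absorbed the $L_t = \int_0^t\int_\R r\,\wt N(ds,dr)$ term into the stochastic integral (using $\alpha\in(1,2)$ so that $\int r\wt N$ is well defined). Expressing $X_s = (\phi^\lambda)^{-1}(Y_s)$, this becomes a genuine SDE for $Y$ with coefficients $F(y):=\lambda\,u^\lambda_b((\phi^\lambda)^{-1}(y))$ (a bounded Lipschitz drift, since $\lambda u^\lambda_b\in\C^{(1+\alpha)/2}$ and $(\phi^\lambda)^{-1}$ is Lipschitz) and jump coefficient $G(y,r):=\phi^\lambda((\phi^\lambda)^{-1}(y)+r)-y$. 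Writing $\wt Y_t := \phi^\lambda(\wt X_t)$, which solves the same equation, I would estimate $\E|Y_t-\wt Y_t|$ (or a truncated/localized version thereof). The drift part contributes $\le \lambda\|(u^\lambda_b)'\|_\infty\cdot\|(\phi^\lambda)^{-1}\|_{\mathrm{Lip}}\int_0^t\E|Y_s-\wt Y_s|\,ds$, which is fine for Gronwall. The delicate term is the compensated jump integral: by the Kunita–Watanabe/$L^1$-isometry-type estimate for compensated Poisson integrals, one bounds its contribution to $\E|Y_t-\wt Y_t|$ in terms of $\E\int_0^t\int_\R |G(Y_{s-},r)-G(\wt Y_{s-},r)|\,|r|^{-1-\alpha}\,dr\,ds$, and one needs to show $\int_\R|G(y,r)-G(y',r)|\,|r|^{-1-\alpha}\,dr\le C|y-y'|$. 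Splitting into $|r|\le 1$ and $|r|>1$: for large $|r|$ the bi-Lipschitz bound on $\phi^\lambda$ gives $|G(y,r)-G(y',r)|\le C|y-y'|$ and $\int_{|r|>1}|r|^{-1-\alpha}dr<\infty$; for small $|r|$ one writes $G(y,r)-G(y',r) = \int_0^r[(\phi^\lambda)'((\phi^\lambda)^{-1}(y)+v)-(\phi^\lambda)'((\phi^\lambda)^{-1}(y')+v)]\,dv$ and uses the $\frac{\alpha-1}{2}$-H\"older continuity of $(\phi^\lambda)'$ together with $|(\phi^\lambda)^{-1}(y)-(\phi^\lambda)^{-1}(y')|\le\tfrac32|y-y'|$ to get $|G(y,r)-G(y',r)|\le C|r|\,|y-y'|^{(\alpha-1)/2}$, whence $\int_{|r|\le1}\cdots\,|r|^{-1-\alpha}dr\le C|y-y'|^{(\alpha-1)/2}$.

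The main obstacle is exactly that last estimate: the H\"older (rather than Lipschitz) regularity of $(\phi^\lambda)'$ only yields a modulus of continuity of the form $|y-y'|^{(\alpha-1)/2}$ for the small-jump part, not a genuine Lipschitz bound, so a naive Gronwall does not close. To handle this I would use a Yamada–Watanabe-type / Tanaka-type argument: instead of $\E|Y_t-\wt Y_t|$, track $\E\,\psi_\varepsilon(Y_t-\wt Y_t)$ for a suitable smooth concave approximation $\psi_\varepsilon$ of $|\cdot|$ (as in the classical proof of pathwise uniqueness for $\alpha$-stable SDEs with H\"older coefficients, cf. Tanaka–Tsuchiya–Watanabe and Komatsu), exploiting the fact that the relevant exponent $\frac{\alpha-1}{2}$ combined with $\alpha\in(1,2)$ satisfies the required integrability. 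Applying the It\^o formula for pure-jump processes to $\psi_\varepsilon(Y_t-\wt Y_t)$, the second-difference structure of the jump term produces an integral $\int\int [\psi_\varepsilon(\Delta+G(Y_{s-},r)-G(\wt Y_{s-},r))-\psi_\varepsilon(\Delta)-\psi_\varepsilon'(\Delta)(G-G')]\,|r|^{-1-\alpha}dr\,ds$; bounding this using $|\psi_\varepsilon''|\le C/(\varepsilon\wedge|\Delta|)$ and the small-/large-jump split above, then letting $\varepsilon\downarrow 0$, leads to $\E|Y_t-\wt Y_t| \le C\int_0^t \E|Y_s-\wt Y_s|\,ds$ after the singular contributions vanish, so Gronwall gives $Y\equiv\wt Y$, hence $X\equiv\wt X$. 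Finally, choosing $\lambda_1=\lambda_1(\beta,\|b\|_\beta)$ large enough to simultaneously guarantee $\lambda_1\ge\lambda_0(\beta,2\|b\|_\beta)$ (so Propositions~\ref{p:resolvent} and \ref{p:vtreal} apply), $\|(u^\lambda_b)'\|_\infty\le\tfrac12$, and that any constants appearing are under control, completes the proof.
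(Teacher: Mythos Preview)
Your overall strategy is exactly the paper's: pass to $Y_t=\phi^\lambda(X_t)$ with $\phi^\lambda(y)=y+u^\lambda_b(y)$, approximate $|\cdot|$ by smooth convex $V_n$ (your $\psi_\varepsilon$), apply It\^o's formula to $V_n(Y_t-\wt Y_t)$, separate large and small jumps, and close by Gronwall after sending $n\to\infty$. The drift and large-jump terms are handled precisely as you say.

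There is, however, a genuine gap in the small-jump estimate. You only invoke $u^\lambda_b\in\C^{(1+\alpha)/2}$, i.e.\ $(\phi^\lambda)'$ is $\tfrac{\alpha-1}{2}$-H\"older, and obtain $|G(y,r)-G(y',r)|\le C|r|\,|y-y'|^{(\alpha-1)/2}$ for $|r|\le 1$. With an approximation satisfying $|V_n''|\le Cn$ on $\{|x|\le 1/n\}$ and $V_n''=0$ outside, the second-difference term is bounded by
\[
Cn\int_0^t\!\!\int_{|r|\le 1}\mathbf{1}(|\Delta_s|\le C/n)\,|r|^2|\Delta_s|^{\alpha-1}\,|r|^{-1-\alpha}\,dr\,ds
\;\le\; Cn\cdot n^{-(\alpha-1)}\cdot t \;=\; Ct\,n^{2-\alpha},
\]
which diverges as $n\to\infty$ since $\alpha<2$. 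So the ``singular contributions vanish'' step does not go through with exponent $\tfrac{\alpha-1}{2}$; no choice of Yamada--Watanabe sequence $\psi_\varepsilon$ rescues this, because interpolating the available bounds $|h|\le C(|r|\wedge|\Delta|)$ and $|h|\le C|r|\,|\Delta|^{(\alpha-1)/2}$ can never produce $|h|^2\le C|r|^{\alpha+\eps}|\Delta|^{1+\eps'}$ with both $\eps,\eps'>0$ (the exponents add up to at most $1+\tfrac{\alpha-1}{2}\cdot 1=\tfrac{1+\alpha}{2}$, which is exactly the borderline).

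The fix, and this is what the paper does, is to use the strict inequality $\beta>\tfrac{1-\alpha}{2}$: by Proposition~\ref{p:resolvent}(i) one actually has $u^\lambda_b\in\C^{(1+\alpha)/2+\eps}$ for any $\eps\in(0,\alpha+\beta-\tfrac{1+\alpha}{2})$. Then the two-point increment estimate (the paper's Lemma~5.3) with $\gamma_1=\tfrac{1+\eps}{2}$, $\gamma_2=\tfrac{\alpha+\eps}{2}$ gives $|h|\le C\|u\|_{(1+\alpha)/2+\eps}\,|\Delta|^{(1+\eps)/2}|r|^{(\alpha+\eps)/2}$, whence $|h|^2\le C|\Delta|^{1+\eps}|r|^{\alpha+\eps}$, and the same computation now yields $Ct\,n^{-\eps}\to 0$. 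With this correction your argument closes exactly as in the paper.
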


We now have all the key ingredients to complete the proof of Theorem
\ref{T:1}. Proposition \ref{p:wex}(ii) shows that (\ref{mainsde}) has
a weak solution. Proposition \ref{p:pu} and Proposition \ref{p:vtreal} together show
that (\ref{mainsde}) has pathwise uniqueness. We then use Theorem 3.4
in \cite{K14}, to establish the classical Yamada-Watanabe theorem, which
implies existence and uniqueness of a strong solution in our general
setting. We present the details in Section \ref{sec:kur}.

\smallskip
The rest of the paper is organized as follows. In Section~\ref{S:prelimka} we present a number of preliminary results that are used for the proof of the theorem. Most of them are very well-known and are provided for the sake of completeness. We discuss the basic properties of the Besov norms, convergence in the Skorokhod space, and Dirichlet processes. Section~\ref{S:2} is devoted to the proof of
Proposition~\ref{p:resolvent}. In Section~\ref{sec:5} we give the proofs of
Propositions~\ref{p:vtreal}, \ref{p:wex}, \ref{p:pu}. This allows to finish the proof of~Theorem \ref{T:1} in Section
 \ref{sec:kur}. Some auxiliary results are proved in the Appendix.

\smallskip
\noindent \textbf{Convention on constants}. Throughout the paper $C$
 denotes a positive constant whose value may change from line to line.
 All other constants will be denoted by $C_1,C_2,\ldots$ They are all
 positive and their precise values are not important. The dependence
 of constants on parameters if needed will be mentioned inside
 brackets, e.g, $C(\alpha, \beta)$.

\section{Preliminaries}\label{S:prelimka}

\subsection{Besov norms and Fractional Laplacian} \label{S:11}
In this section we collect some standard properties of Besov norms and fractional Laplacian that will be used throughout the paper.

First we recall that for $\gamma \in (-\infty,0) \cup \{ (0,\infty) \setminus \N \}$ there
exists the following equivalent definition of Besov-H\"older spaces. Below the notation $\|f\|_\gamma\simeq a$ means that $\|f\|_\gamma= C a$ for some universal constant $C>0$ that depends
only on $\gamma$ but not on the function $f$. 

\begin{Lemma}\label{Besov}
For $\gamma=n+\rho$, $n\in\Z_+$, $\rho\in(0,1)$, the space $\C^\gamma$ is the space of bounded and $n$ times differentiable functions whose $n$th derivative is $\rho$-H\"older continuous. Further, for $f\in\C^\gamma$
\[ \|f\|_\gamma \simeq \sum_{k=0}^n \snorm{\frac{d^k f}{dx^k}} + \sup_{x \neq y } \frac{\mid \frac{d^nf}{dx^n}(x) - \frac{d^nf}{dx^n}(y) \mid}{\mid x - y \mid^\rho}. \]

For $\gamma <0$ we have $\C^\gamma\subset S'$. Further, for $f\in\C^\gamma$
\begin{equation*}
   \| f \|_\gamma \simeq \sup \left \{\lambda^{-\gamma}|\langle f, \phi_{\lambda,x}\rangle|: \begin{array}{l} x \in \R, \lambda \in (0,1], \phi_{\lambda,x} (\cdot) := \frac{1}{\lambda}\phi(\frac{\cdot \ - x}{\lambda})  \mbox{ with } \phi \in \S,\\ \supp \phi \subset (-1,1), \sup\limits_{n \leq \lceil{-\gamma}\rceil } \snorm{\frac{d^n\phi}{dx^n}} \leq 1. \end{array}\right \}.
\end{equation*}

\end{Lemma}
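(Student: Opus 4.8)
\emph{Proof sketch.} The plan is to deduce both equivalences from the Littlewood--Paley definition of $\C^\gamma=\mathcal B^\gamma_{\infty,\infty}$ (as in \cite[Definition~7]{P15}). Write $f=\sum_{j\ge-1}\Delta_j f$ for the Littlewood--Paley decomposition, so that by definition $\|f\|_\gamma\simeq\sup_{j\ge-1}2^{j\gamma}\snorm{\Delta_j f}$, and recall that for $j\ge0$ the block $\Delta_j$ is convolution with $2^j K(2^j\cdot)$, where $K\in\S$ has all moments vanishing, while $\Delta_{-1}$ is convolution with a fixed Schwartz function. I will also use Bernstein's inequality $\snorm{\tfrac{d^k}{dx^k}\Delta_j f}\le C2^{jk}\snorm{\Delta_j f}$. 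The whole statement is classical, so for the parts that are entirely standard I would only indicate the argument and refer to the usual references on Besov spaces.

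\textbf{The case $\gamma=n+\rho$, $n\in\Z_+$, $\rho\in(0,1)$.} This is the identification of $\mathcal B^{\gamma}_{\infty,\infty}$ with the inhomogeneous H\"older space. In one direction, if $f$ lies in the classical H\"older space on the right, then for $j\ge0$ one uses the vanishing moments of $K$ to write
\[
\Delta_j f(x)=\int 2^jK(2^jy)\Bigl(f(x-y)-\sum_{k=0}^{n}\tfrac{(-y)^k}{k!}\tfrac{d^kf}{dx^k}(x)\Bigr)\,dy ,
\]
the bracket being bounded by the H\"older seminorm times $|y|^{n+\rho}$, which gives $\snorm{\Delta_j f}\le C2^{-j\gamma}(\text{H\"older norm})$; the block $j=-1$ is bounded by $\snorm f$. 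Conversely, if $\sup_j2^{j\gamma}\snorm{\Delta_j f}<\infty$, then summing $f=\sum_j\Delta_j f$ and differentiating termwise (justified by Bernstein) shows $f\in\C^n$; for $x\neq y$ one splits $\sum_j\bigl(\tfrac{d^nf}{dx^n}\text{-increment of }\Delta_j f\bigr)$ at the index with $2^{-j}\sim|x-y|$, using the mean value theorem together with the $2^{j(n+1)}$ Bernstein bound on the low-frequency part and the $2^{jn}$ bound directly on the high-frequency part, both tails being geometric series summing to $C|x-y|^\rho$.

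\textbf{The case $\gamma<0$.} Here I match the scale $\lambda\in(0,1]$ of the test function $\phi_{\lambda,x}$ with the Littlewood--Paley index. Fix $j_0$ with $2^{-j_0}\sim\lambda$ and write $\langle f,\phi_{\lambda,x}\rangle=\sum_{j\ge-1}\langle\Delta_j f,\phi_{\lambda,x}\rangle$. For $j\le j_0$, bound $|\langle\Delta_j f,\phi_{\lambda,x}\rangle|\le\snorm{\Delta_j f}\,\|\phi_{\lambda,x}\|_{L^1}\le C2^{-j\gamma}\|f\|_\gamma$ (the $L^1$ norm is scale-invariant); since $\gamma<0$, these sum to $C2^{-j_0\gamma}\|f\|_\gamma\simeq C\lambda^{\gamma}\|f\|_\gamma$. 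For $j>j_0$, exploit the frequency separation: $\Delta_j f$ has spectrum in $|\xi|\sim2^j$ while $\widehat{\phi_{\lambda,x}}(\xi)=e^{-ix\xi}\widehat\phi(\lambda\xi)$ is rapidly decreasing there because $\phi\in\S$; pairing against a fattened Fourier projector gives $|\langle\Delta_j f,\phi_{\lambda,x}\rangle|\le C_N\,2^{-j\gamma}2^{-N(j-j_0)}\|f\|_\gamma$ for every $N$, and taking $N>-\gamma$ and summing over $j>j_0$ again yields $C\lambda^{\gamma}\|f\|_\gamma$. This proves the inequality ``$\le C\|f\|_\gamma$'' for the supremum in the statement. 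For the reverse inequality, write $\Delta_j f(x)=\langle f,\,2^j\check K(2^j(x-\cdot))\rangle$; since $\check K\in\S$ is not compactly supported, decompose $\check K=\sum_{m\ge0}\check K\chi_m$ with $(\chi_m)$ a smooth partition of unity localised to $\{|z|\sim2^m\}$ ($m\ge1$) and $\{|z|\le2\}$ ($m=0$). Each resulting piece of $2^j\check K(2^j(x-\cdot))$ is, after normalisation, a constant $c_m$ times a legitimate test function $\phi^{(m)}_{\lambda_m,x}$ with $\lambda_m\sim2^{m-j}$ and $\supp\phi^{(m)}\subset(-1,1)$, where the rapid decay of $\check K$ on $\{|z|\sim2^m\}$ forces the derivatives up to order $\lceil-\gamma\rceil$ to be small and allows one to take $c_m$ decaying faster than any power of $2^{m}$; the finitely many terms with $\lambda_m>1$ are handled separately using test functions at scale $1$ and the rapid decay of $\check K$ outside $\{|x-y|\le1\}$. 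Applying the hypothesis to each $\phi^{(m)}_{\lambda_m,x}$ gives $|\Delta_j f(x)|\le CS\,2^{-j\gamma}$, where $S$ is the right-hand supremum, and hence $\|f\|_\gamma\le CS$.

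\textbf{Main obstacle.} The only genuinely delicate point is the reverse inequality in the case $\gamma<0$: the Littlewood--Paley kernels are Schwartz but not compactly supported, so one must pass to the compactly supported test functions allowed in the statement via the dyadic decomposition above, while checking that exactly $\lceil-\gamma\rceil$ bounded derivatives of $\phi$ are needed and that the coefficients $c_m$ are summable after the scaling $\lambda_m^\gamma\sim2^{(m-j)\gamma}$ is taken into account. Everything else reduces to routine scale-matching and Bernstein estimates.
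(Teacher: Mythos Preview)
Your sketch is correct and far more detailed than what the paper does: the paper simply records that the statement is standard and refers to \cite[Exercise~13.31]{FH} without giving any argument. So there is no ``paper's approach'' to compare with beyond the fact that the result is classical; your Littlewood--Paley scale-matching argument is exactly the standard proof one would expect behind that citation.

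One small imprecision worth fixing: in the reverse inequality for $\gamma<0$ you say that ``the finitely many terms with $\lambda_m>1$'' are handled separately. In fact for each fixed $j$ there are \emph{infinitely} many indices $m>j$ with $\lambda_m\sim 2^{m-j}>1$; what saves you is not finiteness but the rapid decay of $\check K$ on $\{|z|\sim 2^m\}$, which makes the normalising constants $c_m$ decay faster than any polynomial in $2^m$. For those $m$ one covers the support (an interval of length $\sim 2^{m-j}$) by $O(2^{m-j})$ unit intervals and uses admissible test functions at scale $1$ on each; the polynomial factor $2^{m-j}$ is then absorbed by the super-polynomial decay of $c_m$. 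With this correction the argument goes through. You might also note, for the forward direction with $\gamma<0$, that although $\phi\in\S$ permits taking any $N$, choosing $N=\lceil-\gamma\rceil$ already suffices and is what makes the constant in the equivalence depend only on the normalised derivatives singled out in the statement.
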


\begin{proof} This property is standard and it follows from, e.g., \cite[Exercise 13.31]{FH}.
 \end{proof}

Note also that for any $\gamma\in\R$ the space $\C^\gamma$ includes all distributions which are distributional derivatives of elements of $\C^{\gamma+1}$.

The next lemma provides useful properties of Besov norms.
\begin{Lemma}\label{L:31} Let $f$ be a function $\R\to\R$. Then the following holds:
 \begin{enumerate}
  \item[\rm{(i)}] For any $\eta, \gamma \in \R$ and $\eta < \gamma$, we have $\|f\|_{\eta}\le \| f\|_{\gamma}$.
 \item[\rm{(ii)}] For any $\eta, \gamma \in \R$ and $\eta<0<\gamma$, there exist constants $C_1>0$, $C_2>0$ such that $$\|f\|_{\eta}\le C_1 \snorm{f}\le C_2 \|f\|_{\gamma}.$$
 \item[\rm{(iii)}] For any $\eta \in\R$ there exists $C>0$ such that
 \begin{equation}\label{der}
\|f'\|_{\eta-1}\le C \|f\|_{\eta}.
 \end{equation}
 \item[\rm{(iv)}] For any $\eta, \gamma \in \R$, $\eta + \gamma >0$ there exists $C>0$ such that for any $g \in \C^\gamma$,
 \begin{equation}\label{product}
 \|fg\|_{\eta \wedge \gamma}\le C\|f\|_{\eta}\|g\|_{\gamma}.
 \end{equation}
\end{enumerate}
Further, the constants $C$, $C_1$, $C_2$ do not depend on the functions $f$, $g$.
\end{Lemma}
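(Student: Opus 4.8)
The plan is to derive all four assertions from the Littlewood--Paley block description of the Besov norm,
\[
\|f\|_\gamma\;\simeq\;\sup_{j\ge-1}2^{j\gamma}\snorm{\Delta_j f},\qquad\gamma\in\R,
\]
where $(\Delta_j)_{j\ge-1}$ is the dyadic partition of unity ($\Delta_{-1}$ supported on a ball, and $\Delta_j$ supported on the annulus $\{|\xi|\sim 2^j\}$ for $j\ge0$); see \cite[Chapter~13]{FH}. Two elementary facts will serve as workhorses: (a) the convolution kernels of the $\Delta_j$ have $L^1$-norms bounded uniformly in $j$, so that Young's inequality gives $\snorm{\Delta_j h}\le C\snorm{h}$ with $C$ independent of $j$ and $h$---this is the source of all the uniformity-in-$f$ claims; and (b) Bernstein's inequality: if $\widehat h$ is supported in $\{|\xi|\sim2^j\}$, then $\snorm{h'}\le C2^j\snorm{h}$.

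Given this, items (i)--(iii) are immediate. For (i), when $\eta<\gamma$ we have $2^{j\eta}\le2^{j\gamma}$ for $j\ge0$, and the contribution of the block $j=-1$ is controlled in the same way; taking the supremum over $j$ yields $\|f\|_\eta\le C\|f\|_\gamma$. For (ii): the embedding $\snorm{f}\le C\|f\|_\gamma$ for $\gamma>0$ follows by writing $f=\sum_{j\ge-1}\Delta_j f$ and summing the geometric series $\sum_{j\ge-1}2^{-j\gamma}$, while the embedding $\|f\|_\eta\le C\snorm{f}$ for $\eta<0$ follows from $2^{j\eta}\snorm{\Delta_j f}\le C2^{j\eta}\snorm{f}\le C\snorm{f}$ for $j\ge0$ together with the trivial bound on the block $j=-1$. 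For (iii), Bernstein applied blockwise gives $2^{j(\eta-1)}\snorm{\Delta_j f'}=2^{j(\eta-1)}\snorm{(\Delta_j f)'}\le C2^{j\eta}\snorm{\Delta_j f}\le C\|f\|_\eta$, and one passes to the supremum over $j$ (the block $j=-1$ being handled directly).

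The one multi-step item is (iv), and I regard it as the main---though still modest---obstacle, the delicate point being the bookkeeping of how the regularity exponents combine. The plan is to use Bony's decomposition $fg=T_fg+T_gf+R(f,g)$ into the two paraproducts and the resonant term, and to invoke the standard continuity estimates: $\|T_gf\|_\eta\le C\snorm{g}\,\|f\|_\eta$ (valid for every $\eta\in\R$), $\|T_fg\|_\gamma\le C\snorm{f}\,\|g\|_\gamma$ when $\eta>0$ (using $\snorm{f}\le C\|f\|_\eta$) and $\|T_fg\|_{\eta+\gamma}\le C\|f\|_\eta\,\|g\|_\gamma$ when $\eta\le0$, together with $\|R(f,g)\|_{\eta+\gamma}\le C\|f\|_\eta\,\|g\|_\gamma$---and it is precisely this last estimate that requires the hypothesis $\eta+\gamma>0$, which guarantees convergence of the resonant sum. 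Since $\eta+\gamma>0$ forces $\eta\vee\gamma>0$, we may assume without loss of generality that $\eta\le\gamma$, whence $\gamma>0$; combining the above bounds with part (i) (to descend from regularity $\gamma$, respectively $\eta+\gamma$, both of which are $\ge\eta$, down to $\eta$) then gives $\|fg\|_\eta\le C\|f\|_\eta\|g\|_\gamma$, which is \eqref{product} since $\eta\wedge\gamma=\eta$; the constant does not depend on $f$ or $g$ because the underlying block estimates do not. This is exactly the content of \cite[Corollary~1]{GP15}, to which one may alternatively just refer.
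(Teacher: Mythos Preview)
Your proposal is correct. The paper's own proof consists entirely of citations to the literature (\cite{P15}, \cite{T}), so your argument---via the Littlewood--Paley block characterization, Bernstein's inequality, and Bony's paraproduct decomposition---is precisely the standard route those references take, only spelled out rather than deferred; in particular your treatment of (iv) is exactly the content of \cite[Corollary~1]{GP15}, which you yourself note.

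One cosmetic remark: the paper states (i) without a constant, $\|f\|_\eta\le\|f\|_\gamma$, whereas your argument (correctly) produces $\|f\|_\eta\le C\|f\|_\gamma$ because the block $j=-1$ contributes a factor $2^{\gamma-\eta}$; this is a matter of normalization of the Besov norm and not a gap in your reasoning.
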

\begin{proof} (i) and (ii) follow, e.g., from Exercise 2 in \cite{P15}. (iii) This follows from, e.g., \cite[Formula 2.3.8.(6)]{T}.
(iv) Follows immediately from, e.g., \cite[Section~2.3]{P15} and \cite[Theorem 13]{P15}.
 \end{proof}
The properties of Besov norms established in Lemma~\ref{L:31} are basic and we will be using them further in the paper without an explicit reference to the lemma. Our last lemma in this section describes additional properties of Besov norms in relation to the fractional Laplacian and its associated semigroup.

\begin{Lemma}\label{P:1}
 \begin{enumerate}
  \item[\rm{(i)}] For any $\gamma\in\R$ there exists $C>0$ such that for any $f\in\C^\gamma$
\begin{equation*}
\|\OL_\alpha f\|_{\gamma-\alpha}\le C \|f\|_{\gamma}.
\end{equation*}
\item[\rm{(ii)}]For any $\gamma\ge0$, $\eta\in(-\infty,\gamma]$ there exists $C>0$ such that for any $f\in\C^\infty_b$, $t\in(0,1]$
\begin{equation*}
\|P_t f\|_{\gamma}\le C t^{\frac{\eta-\gamma}{\alpha}}\|f\|_{\eta}
\end{equation*}
\item[\rm{(iii)}]For any $\gamma\ge0$, $\eta\in(-\infty,\gamma]$ there exists $C>0$ such that for any $f\in\C^\infty_b$, $t\ge1$
\begin{equation*}
\|P_t f\|_{\gamma}\le C\|f\|_{\eta}.
\end{equation*}
\end{enumerate}
\end{Lemma}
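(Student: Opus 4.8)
The plan is to reduce all three estimates to bounds on Littlewood--Paley blocks. Write $(\Delta_j)_{j\ge-1}$ for the Littlewood--Paley projectors, so that for every $\gamma\in\R$ one has $\|h\|_\gamma\simeq\sup_{j\ge-1}2^{j\gamma}\snorm{\Delta_j h}$. Since $\OL_\alpha$ and each $P_t$ are Fourier multiplier operators, with symbols $-|\xi|^\alpha$ and $e^{-t|\xi|^\alpha}$ respectively, they commute with $\Delta_j$, so it suffices to estimate $\snorm{\OL_\alpha\Delta_j f}$ and $\snorm{P_t\Delta_j f}$ in terms of $\snorm{\Delta_j f}$ and then optimise the weighted supremum over $j$. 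In (i) the function $f$ is a general element of $\C^\gamma\subset\S'$ and $\OL_\alpha f$ is read off by duality as in the text; in (ii) and (iii) $f\in\C^\infty_b$, so the manipulations are classical.

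For (i), I would, for $j\ge0$, fix $\wt\chi\in\C^\infty_c$ supported in an annulus bounded away from the origin and equal to $1$ on the rescaled Fourier support of $\Delta_j$, so that $\OL_\alpha\Delta_j f=K_j*\Delta_j f$ with $\widehat{K_j}(\xi)=-|\xi|^\alpha\wt\chi(2^{-j}\xi)=2^{j\alpha}\,m(2^{-j}\xi)$, where $m(\zeta):=-|\zeta|^\alpha\wt\chi(\zeta)\in\C^\infty_c$. By scaling $\|K_j\|_{L^1}=2^{j\alpha}\|\check m\|_{L^1}$ with $\check m\in\S$, so Young's inequality gives $\snorm{\OL_\alpha\Delta_j f}\le C\,2^{j\alpha}\snorm{\Delta_j f}$. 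For the low-frequency block $j=-1$ the same argument applies with a cut-off $\wt\chi_{-1}\in\C^\infty_c$ supported in a ball; the truncated symbol $-|\xi|^\alpha\wt\chi_{-1}(\xi)$ is compactly supported and only of class $C^1$ at the origin (here $\alpha\in(1,2)$ is used), but its inverse Fourier transform still decays like $|x|^{-1-\alpha}$ and hence lies in $L^1$, so $\snorm{\OL_\alpha\Delta_{-1}f}\le C\snorm{\Delta_{-1}f}$. Weighting the $j$-th bound by $2^{j(\gamma-\alpha)}$ and taking the supremum over $j$ then gives $\|\OL_\alpha f\|_{\gamma-\alpha}\le C\|f\|_\gamma$.

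For (ii) and (iii), the main input I would establish is the Bernstein-type smoothing bound $\snorm{P_t\Delta_j f}\le C\,e^{-c\,t2^{j\alpha}}\snorm{\Delta_j f}$ for $j\ge0$ and $t>0$, with $c>0$. This is proved exactly as in (i): localised to the relevant annulus, the symbol of $P_t$ equals $g_s(2^{-j}\xi)$ with $g_s(\zeta):=e^{-s|\zeta|^\alpha}\wt\chi(\zeta)$ and $s:=t2^{j\alpha}$; since $|\zeta|^\alpha\ge c>0$ on $\supp\wt\chi$ one factors $g_s=e^{-cs}h_s$, where the derivatives of $h_s$ grow at most polynomially in $s$, whence $\|\check g_s\|_{L^1}\le C\,e^{-c's}$ for any $c'<c$, and Young's inequality applies. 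The block $j=-1$ is immediate, since $P_t$ is convolution with a probability density and so $\snorm{P_t\Delta_{-1}f}\le\snorm{\Delta_{-1}f}\le C\|f\|_\eta$. For (ii), with $t\in(0,1]$ and $\theta:=(\gamma-\eta)/\alpha\ge0$, one has for $j\ge0$ that $2^{j\gamma}\snorm{P_t\Delta_j f}\le C\,2^{j(\gamma-\eta)}e^{-c\,t2^{j\alpha}}\cdot 2^{j\eta}\snorm{\Delta_j f}\le C\big(\sup_{x>0}x^{\theta}e^{-ctx}\big)\|f\|_\eta\le C\,t^{-\theta}\|f\|_\eta$, while the $j=-1$ term is at most $C\|f\|_\eta\le C\,t^{-\theta}\|f\|_\eta$ because $t\le1$; the supremum over $j$ then yields $\|P_t f\|_\gamma\le C\,t^{(\eta-\gamma)/\alpha}\|f\|_\eta$. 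For (iii), with $t\ge1$ one bounds $e^{-c\,t2^{j\alpha}}\le e^{-c\,2^{j\alpha}}$, so $\sup_{j\ge0}2^{j(\gamma-\eta)}e^{-c2^{j\alpha}}<\infty$ and, together with the $j=-1$ term, $\|P_t f\|_\gamma\le C\|f\|_\eta$.

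I expect the argument to be essentially routine, with the only delicate points being the low-frequency blocks, where $-|\xi|^\alpha$ and $e^{-t|\xi|^\alpha}$ are non-smooth at the origin --- dealt with above by noting that the frequency-truncated symbols are still $C^1$ with $L^1$ inverse Fourier transforms, which is exactly where $\alpha\in(1,2)$ enters --- and the uniform control in $s$ of $\|\check g_s\|_{L^1}$ in the Bernstein estimate, where the polynomial-in-$s$ growth of the kernel must be tracked and absorbed into the factor $e^{-cs}$. Alternatively, (i)--(iii) are standard mapping properties on Besov spaces of Fourier multipliers of order $\alpha$, respectively of the semigroup generated by $-(-\Delta)^{\alpha/2}$, and could be quoted directly from, e.g., \cite{T} or the paracontrolled-distributions literature.
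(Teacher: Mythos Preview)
Your argument is correct and follows the standard Littlewood--Paley/Bernstein route; the paper takes a somewhat different path. For (i), the paper does not work block by block but instead invokes Triebel's Fourier multiplier theorem: it writes $\OL_\alpha f=F^{-1}\rho(\cdot)\,F\bigl(F^{-1}(1+|\cdot|^2)^{\alpha/2}Ff\bigr)$ with $\rho(\xi)=|\xi|^\alpha(1+|\xi|^2)^{-\alpha/2}$, uses the Bessel-potential lifting $\|F^{-1}(1+|\cdot|^2)^{\alpha/2}Ff\|_{\gamma-\alpha}\le C\|f\|_\gamma$, and then checks $\rho\in h^1_2$ (this is where $\alpha>1$ is used, paralleling your low-frequency block). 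For (ii) the paper simply cites \cite[Lemma~A.7]{GIP17}, whose proof is essentially the Bernstein argument you wrote out. For (iii) the paper does \emph{not} continue the Bernstein estimate but gives a two-line reduction: since $p_t$ is a probability density, $\|P_t g\|_\gamma\le\|g\|_\gamma$ for all $t>0$, hence $\|P_t f\|_\gamma=\|P_{t-1}P_1 f\|_\gamma\le\|P_1 f\|_\gamma\le C\|f\|_\eta$ by (ii). Your approach is more self-contained and uniform across the three parts; the paper's is shorter for (iii) and leans on existing multiplier machinery for (i).
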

\begin{proof} (i), (iii). These statements are standard however we were not able to find their proofs in the literature; for the sake of completeness we provide their proofs in
Appendix~\ref{p32a}. (ii) We refer the reader to \cite[Lemma A.7]{GIP17}. Even though the statement of Lemma A.7 is for a bounded set, one can verify that the proof works also for $\R$.
 \end{proof}

\subsection{Properties of convergence in the Skorokhod space}

Let $E$ be a metric space. In this section we provide some technical though important tools for studying convergence in the Skorokhod space $\D_E[0,T]$, that is the space of all c\`adl\`ag functions $[0,T]\to E$. We refer the reader to \cite[Chapter~3]{Bilya} and \cite[Chapter~3]{EK86} for a detailed treatment of the Skorokhod space and the necessary definitions. Denote by $d$ the Skorokhod distance in $\D_E$. Let $\Lambda$ be the set of continuous strictly increasing functions mapping $[0,T]$ onto $[0,T]$.

We will use the following lemma, which is just a minor modification of a corresponding lemma from \cite{KP91}. For the convenience of the reader and for the sake of exposition we state this proposition and its proof below.

\begin{Lemma}[cf. {\cite[Lemma~2.1]{KP91}}]\label{P:PK}
Let $E_1$ and $E_2$ be metric spaces and let $\Phi$, $\Phi_n$, where $n\in\Z_+$, be mappings
$\D_{E_1}[0,T]\to\D_{E_2}[0,T]$, $T>0$. Suppose
that for any $n\in\Z_+$ we have $\Phi^n(Z\circ\mu)=\Phi^n(Z)\circ\mu$ whenever $Z\in\D_{E_1}[0,T]$, $\mu\in\Lambda$. Further assume that $Z^n\to Z$ in the uniform metric implies $\Phi^n(Z^n)\to\Phi(Z)$ in the uniform metric. Then $Z^n\to Z$ in the Skorokhod topology implies $(Z^n,\Phi^n(Z^n))\to(Z,\Phi(Z))$ in the Skorokhod topology.
\end{Lemma}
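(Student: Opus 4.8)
The plan is to reduce the statement to the corresponding "deterministic" fact about time changes in the Skorokhod space, following the argument of \cite[Lemma~2.1]{KP91}. Suppose $Z^n\to Z$ in $\D_{E_1}[0,T]$ with the Skorokhod metric. By the definition of the Skorokhod topology, there exist time changes $\mu_n\in\Lambda$ such that $\mu_n\to\mathrm{id}$ uniformly on $[0,T]$ and $Z^n\circ\mu_n\to Z$ in the uniform metric. The first step is then to apply the hypothesis twice: by the intertwining relation $\Phi^n(Z\circ\mu)=\Phi^n(Z)\circ\mu$ (with $\mu=\mu_n$) we get $\Phi^n(Z^n\circ\mu_n)=\Phi^n(Z^n)\circ\mu_n$; and by the uniform continuity hypothesis applied to the sequence $Z^n\circ\mu_n\to Z$ we get $\Phi^n(Z^n\circ\mu_n)\to\Phi(Z)$ in the uniform metric. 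Combining these, $\Phi^n(Z^n)\circ\mu_n\to\Phi(Z)$ uniformly.

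The second step is to assemble the joint convergence. We must show $(Z^n,\Phi^n(Z^n))\to(Z,\Phi(Z))$ in $\D_{E_1\times E_2}[0,T]$, which amounts to exhibiting a single sequence of time changes that works simultaneously for both coordinates. But the time changes $\mu_n$ constructed above already do this: we have both $Z^n\circ\mu_n\to Z$ and $\Phi^n(Z^n)\circ\mu_n\to\Phi(Z)$ in the uniform metric, and $\mu_n\to\mathrm{id}$ uniformly (equivalently $\|\mu_n-\mathrm{id}\|_\infty\to0$, which controls the relevant Lipschitz-type quantity $\sup_{s\neq t}|\log\frac{\mu_n(t)-\mu_n(s)}{t-s}|$ only if we chose $\mu_n$ appropriately --- see below). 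Hence the pair $(Z^n,\Phi^n(Z^n))$ composed with $\mu_n$ converges uniformly to $(Z,\Phi(Z))$, which is precisely Skorokhod convergence of the pair.

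A technical point to be careful about is which metrization of the Skorokhod topology one uses. With the standard $J_1$ metric $d(y,z)=\inf_{\mu\in\Lambda}\big(\|\mu-\mathrm{id}\|_\infty\vee\|y\circ\mu-z\|_\infty\big)$, convergence $Z^n\to Z$ gives $\mu_n$ with $\|\mu_n-\mathrm{id}\|_\infty\to0$ and $\|Z^n\circ\mu_n-Z\|_\infty\to0$, which is exactly what the argument above uses; there is no need to pass to the complete (Billingsley) metric since we only need to produce \emph{some} converging time changes, not to verify completeness. The only genuine subtlety --- and the step I would expect to require the most care --- is verifying that the intertwining hypothesis $\Phi^n(Z\circ\mu)=\Phi^n(Z)\circ\mu$ is used with $\mu$ ranging over all of $\Lambda$ and not just over a restricted class, and that the uniform-continuity hypothesis is genuinely a statement about \emph{arbitrary} uniformly convergent sequences (so that it applies to $Z^n\circ\mu_n$, not merely to $Z^n$); both are exactly as stated in the lemma, so the proof goes through. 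I would also remark that no measurability or adaptedness is involved here: the lemma is purely topological, which is why it can later be applied pathwise to the Zvonkin transformation $\Phi^n = (\mathrm{id}+u^{\lambda}_{b_n})$ acting on the processes $X^n$.
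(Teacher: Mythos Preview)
Your proof is correct and follows essentially the same route as the paper's: both arguments pick time changes $\mu_n\in\Lambda$ with $\|\mu_n-\mathrm{id}\|\to0$ and $Z^n\circ\mu_n\to Z$ uniformly, apply the intertwining relation to get $\Phi^n(Z^n)\circ\mu_n=\Phi^n(Z^n\circ\mu_n)\to\Phi(Z)$ uniformly, and then observe that the single sequence $(\mu_n)$ witnesses Skorokhod convergence of the pair. Your parenthetical about the Billingsley metric is unnecessary (the non-complete $d_0$ metric suffices, as you note), but it does no harm.
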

\begin{proof}
The proof follows the proof of \cite[Lemma~2.1]{KP91} with minor modifications. Let $\rho_{E_1}$ and $\rho_{E_2}$ denote the metric on $E_1$ and $E_2$ respectively. Let $\rho_{E_1\times E_2}$ be the product metric on $E_1\times E_2$.

Take any $Z\in \D_{E_1}[0,T]$, a sequence $(Z^n)_{n\in\Z_+}$, $Z^n\in \D_{E_1}[0,T]$ and assume that $d(Z^n,Z)\to0$. Then there exists a mapping $\mu^n\in\Lambda$ such that $\snorm{\mu^n-I}\to0$ where $I$ denote the identity map and $\sup_{t \in [0,T]}\rho_{E_1}( (Z^n\circ \mu^n)(t), Z_t)\to0 $.  By the assumptions, this implies that
$$
\sup_{t \in [0,T]} \rho_{E_2}\bigl( \Phi^n(Z^n\circ\mu^n)(t),\Phi(Z)(t)\bigr) \to0.
$$
Since $\Phi^n(Z^n\circ\mu^n)=\Phi^n(Z^n)\circ\mu^n$, we finally obtain
\begin{align*}
d\bigl((Z^n,&\Phi^n(Z^n)),(Z,\Phi(Z))\bigr)\\
&\le \snorm{\mu^n-I}+ \sup_{t \in [0,T]} \rho_{E_1 \times E_2}\Bigl(\bigl([Z^n\circ\mu^n](t),[\Phi^n(Z^n)\circ\mu^n](t)\bigr),\bigl(Z_t,\Phi(Z)(t)\bigr)\Bigr)\to0.
\end{align*}
This implies the statement of the lemma.
\end{proof}

We will use the following simple lemma that deals with convergence of integrals in the Skorokhod space.

\begin{Lemma}\label{L:425}
Let $(X^n)_{n\in\Z_+}$ be a sequence of elements in $\D_{\R}[0,T]$ converging a.s. in the Skorokhod metric to
$X$.
Let $(f_n)_{n\in\Z_+}$ be a sequence of continuous functions $\R^2\to\R$ converging uniformly to $f$. Assume
\begin{equation*}
\sup_{n\in\Z_+}\snorm{f_n}<\infty.
\end{equation*}
Let $A\in\R$ be a Borel measurable set and let $\theta$ be a finite measure on $A$.

Then the process $J^n$ defined as
\begin{equation*}
J^n(t):=\int_0^t\int_A f_n(X^n_{s-},r)\,\theta(dr)ds,\quad t\in[0,T],
\end{equation*}
uniformly on $[0,T]$ converges to the process $J$
\begin{equation*}
J(t):=\int_0^t\int_A f(X_{s-},r)\,\theta(dr)ds,\quad t\in[0,T]
\end{equation*}
a.s. as $n\to\infty$.
\end{Lemma}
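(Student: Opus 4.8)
\textbf{Proof plan for Lemma~\ref{L:425}.}
The plan is to fix a realization on the almost-sure event where $X^n\to X$ in the Skorokhod metric, and to reduce the statement to the following two facts: (a) the one-parameter family of ``integrands in $s$'', namely $s\mapsto\int_A f_n(X^n_{s-},r)\,\theta(dr)$, is uniformly bounded in $n$ and $s$; and (b) for Lebesgue-almost every $s\in[0,T]$ we have $X^n_{s-}\to X_{s-}$, hence $\int_A f_n(X^n_{s-},r)\,\theta(dr)\to\int_A f(X_{s-},r)\,\theta(dr)$. Granting (a) and (b), the convergence $J^n(t)\to J(t)$ for each fixed $t$ follows from dominated convergence (using $\theta(A)<\infty$ together with $\sup_n\snorm{f_n}<\infty$ to produce the dominating constant $\snorm{f_n}\,\theta(A)$ on the inner integral, and Lebesgue measure on $[0,T]$ for the outer integral); and the upgrade to \emph{uniform} convergence on $[0,T]$ is immediate because each $J^n$ is Lipschitz in $t$ with a common Lipschitz constant $\sup_n\snorm{f_n}\,\theta(A)$, so the $J^n$ are equicontinuous and pointwise convergence on $[0,T]$ forces uniform convergence.

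For step (a): since $X^n\to X$ in $\D_\R[0,T]$, the sequence $(X^n)$ is relatively compact in the Skorokhod topology, hence $\sup_n\snorm{X^n}<\infty$; combined with $\sup_n\snorm{f_n}<\infty$ this gives $\bigl|\int_A f_n(X^n_{s-},r)\,\theta(dr)\bigr|\le \snorm{f_n}\,\theta(A)\le (\sup_n\snorm{f_n})\,\theta(A)$ for all $n$ and $s$, which is the uniform bound needed (and also furnishes the dominating function).

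For step (b): Skorokhod convergence $X^n\to X$ gives time-changes $\mu^n\in\Lambda$ with $\snorm{\mu^n-I}\to0$ and $\sup_{t}|X^n_{\mu^n(t)}-X_t|\to0$ (here I use that the limit's jump set is countable, or more simply that Skorokhod convergence implies convergence at every continuity point of $X$). At any $s$ which is a continuity point of $X$, one deduces $X^n_{s-}\to X_s=X_{s-}$; since $X$ has at most countably many discontinuities, this holds for all but Lebesgue-null many $s\in[0,T]$. For such $s$, continuity of $f$ in its first argument and the uniform convergence $f_n\to f$ give $f_n(X^n_{s-},r)\to f(X_{s-},r)$ for every $r\in A$ (write $f_n(X^n_{s-},r)-f(X_{s-},r)=[f_n(X^n_{s-},r)-f(X^n_{s-},r)]+[f(X^n_{s-},r)-f(X_{s-},r)]$; the first bracket is $\le\snorm{f_n-f}\to0$ and the second tends to $0$ by continuity of $f$), and then dominated convergence in $r$ (dominating function $\snorm{f_n-f}+$ something, or simply note $|f_n(X^n_{s-},r)|\le\sup_n\snorm{f_n}$) yields $\int_A f_n(X^n_{s-},r)\,\theta(dr)\to\int_A f(X_{s-},r)\,\theta(dr)$.

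The main (and only mildly delicate) obstacle is the bookkeeping around \emph{left limits}: Skorokhod convergence controls $X^n$ only at continuity points of $X$, not uniformly including the left limits at jump times, so one must be careful to restrict attention to the co-countable set of continuity points of $X$ when asserting $X^n_{s-}\to X_{s-}$, and then invoke that the exceptional set is Lebesgue-null so it does not affect the $ds$-integral. Everything else is a routine application of dominated convergence plus the equicontinuity-in-$t$ argument to pass from pointwise to uniform convergence on the compact interval $[0,T]$.
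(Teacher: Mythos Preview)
Your proof is correct and rests on the same core ingredients as the paper's: Skorokhod convergence gives $X^n_{s-}\to X_{s-}$ at all but countably many $s$, and then dominated convergence (with the uniform bound $\sup_n\snorm{f_n}<\infty$ and the finite measure $\theta$) finishes the job. The paper's argument is more direct, however: rather than first proving pointwise convergence $J^n(t)\to J(t)$ and then upgrading to uniform convergence via equicontinuity, it simply observes that
\[
\snorm{J^n-J}\le \int_0^T\int_A |f_n(X^n_{s-},r)-f(X_{s-},r)|\,\theta(dr)\,ds,
\]
(the supremum over $t$ is absorbed because the integrand on the right is non-negative and the domain of integration only grows), and then applies dominated convergence once to this single double integral. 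This bypasses the equicontinuity step entirely. Your detour through $\sup_n\snorm{X^n}<\infty$ is also unnecessary, as you yourself note at the end: the hypothesis $\sup_n\snorm{f_n}<\infty$ already provides the dominating constant without any control on the $X^n$.
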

\begin{proof}
We have
\begin{equation*}
\snorm{J^n-J}\le \int_0^T \int_A |f_n(X^n_{s-},r)-f(X_{s-},r)|\,\theta(dr)ds.
\end{equation*}
Recall that $X^n$ converges to $X$ a.s. in the Skorokhod metric. Therefore for almost all $\omega\in\Omega$ the sequence  $(X_{t-}^n(\omega))_{n\in\Z_+}$ converges to $X_{t-}(\omega)$ for all but countably many $t\in[0,T]$. Since the sequence $(f_n)_{n\in\Z_+}$ is uniformly bounded and converges pointwise to $f$, we can apply the dominated convergence theorem to conclude
\begin{equation*}
\snorm{J^n-J}\to0\text{\, a.s. as $n\to\infty$}.\qedhere
\end{equation*}
\end{proof}

The next lemma is more complicated and it deals with passing to the limit in stochastic integrals.

\begin{Lemma}\label{L:42}
Let $(X^n,L^n)_{n\in\Z_+}$ be a sequence of elements in $\D_{\R^2}[0,T]$ converging a.s. in the Skorokhod metric to $(X,L)$. Let $\nu$ be a Levy measure satisfying
\begin{equation}\label{rkvadrat}
\int_{\R} (1\wedge r^2) \nu(dr)<\infty.
\end{equation}
Assume that $L$, $(L^n)_{n\in\Z_+}$ are L\'evy processes with the L\'evy measure $\nu$. Let $N^n$ and $\wt N^n$ be the Poisson measure and compensated Poisson measure associated with $L^n$, respectively, where $n\in\Z_+$. Define $N$ and $\wt N$ in a similar way.
Let $(\mathcal{F}^{X^n}_t)_{t\ge 0}$ (respectively $(\mathcal{F}^{X}_t)_{t\ge 0}$) be the natural filtration of $X^n$ (respectively $X$). Assume that $\{\wt N^n_t(A), t\geq 0\}$ is an $\mathcal{F}^{X^n}_t$-martingale, for any compact set $A\subset \R\setminus \{0\}$. Then
\begin{itemize}
\item[{\rm(i)}]
$\{\wt N_t(A), t\geq 0\}$ is an $\mathcal{F}^{X}_t$-martingale, for any compact set
 $A\subset \R\setminus \{0\}$.
\item[{\rm(ii)}]
Let $(f_n)_{n\in\Z_+}$ be a sequence of continuous functions $\R^2\to\R$ converging uniformly to $f$. Assume that for some $C>0$
\begin{equation}\label{cond1}
|f_n(x,r)|\le C (|r|\wedge1),\quad x,r\in\R,\,\,n\in\Z_+.
\end{equation}
Then for any $T>0$ the process $I^n$ defined as
\begin{equation*}
I^n(t):=\int_0^t \int_\R f_n(X^n_{s-},r)\wt N^n(ds,dr),\quad t\in[0,T]
\end{equation*}
converges in probability in the Skorokhod space $\D_{\R}[0,T]$ to the process $I$
\begin{equation*}
I(t):=\int_0^t \int_\R f(X_{s-},r)\wt N(ds,dr),\quad t\in[0,T].
\end{equation*}
\end{itemize}
\end{Lemma}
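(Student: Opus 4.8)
The plan is to follow the classical approach of Jakubowski–Mémin–Pagès / Kurtz–Protter for limit theorems of stochastic integrals, but exploiting the special structure here: the integrands $f_n$ are uniformly bounded by $|r|\wedge 1$, and one may split the jump part of $L^n$ into ``large jumps'' ($|r|>\delta$) and ``small jumps'' ($|r|\le\delta$) and treat them separately. First I would prove (i). Since $(X^n,L^n)\to(X,L)$ a.s. in $\D_{\R^2}[0,T]$ and each $L^n$ is a L\'evy process with L\'evy measure $\nu$, the limit $L$ is again such a L\'evy process; for a compact $A\subset\R\setminus\{0\}$ with $\nu(\partial A)=0$ the random variables $N^n_t(A)=\#\{s\le t: \Delta L^n_s\in A\}$ converge (along a.s.-convergent paths, at continuity times $t$ of $N_\cdot(A)$) to $N_t(A)$, so $\wt N^n_t(A)=N^n_t(A)-t\nu(A)\to\wt N_t(A)$. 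To upgrade the $\mathcal F^{X^n}_t$-martingale property to an $\mathcal F^X_t$-martingale property for $\wt N_t(A)$ one uses the standard argument: for bounded continuous $H$ depending on $(X_{s_1},\dots,X_{s_k})$ with $s_1<\dots<s_k\le s<t$ (choosing the $s_i,s,t$ among the a.s.-continuity points so that evaluation is continuous on $\D$), pass to the limit in $\E[(\wt N^n_t(A)-\wt N^n_s(A))H(X^n_{s_1},\dots,X^n_{s_k})]=0$; the uniform integrability needed to exchange limit and expectation comes from $\sup_n\E|\wt N^n_t(A)|^2=\sup_n(t\nu(A)+ (t\nu(A))^2+\dots)<\infty$ since all $\wt N^n_\cdot(A)$ are compensated Poisson processes with the \emph{same} rate $\nu(A)$. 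A monotone-class argument then gives the martingale property for the full $\mathcal F^X_t$.

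For (ii) I would first reduce to compactly supported L\'evy measure. Fix $\delta>0$ and write $f_n=f_n\I_{|r|\le\delta}+f_n\I_{|r|>\delta}$, correspondingly splitting $I^n=I^{n,\le\delta}+I^{n,>\delta}$. For the ``large jump'' part $I^{n,>\delta}(t)=\int_0^t\int_{|r|>\delta}f_n(X^n_{s-},r)N^n(ds,dr) - t\int_{|r|>\delta}\int \dots$, i.e.\ it is a finite-variation process driven by the (finitely many on $[0,T]$) large jumps of $L^n$; using that $(X^n,L^n)\to(X,L)$ in Skorokhod, that large jump times and sizes converge, that $f_n\to f$ uniformly, and Lemma~\ref{L:425} for the compensator term, one gets $I^{n,>\delta}\to I^{>\delta}$ in $\D_\R[0,T]$ (in probability, after passing to a subsequence giving a.s.\ convergence). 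For the ``small jump'' part, the uniform bound \eqref{cond1} gives
\[
\E\Big[\sup_{t\le T}\big(I^{n,\le\delta}(t)\big)^2\Big]\le 4\,\E\Big[\int_0^T\int_{|r|\le\delta} f_n(X^n_{s-},r)^2\,\nu(dr)\,ds\Big]\le 4C^2 T\int_{|r|\le\delta}(r^2\wedge 1)\,\nu(dr),
\]
by Doob's inequality and the Itô isometry for the compensated integral, and the right-hand side is independent of $n$ and tends to $0$ as $\delta\to0$ by \eqref{rkvadrat}; the same bound holds for $I^{\le\delta}$. A standard $3\varepsilon$-argument (choose $\delta$ small to control both small-jump parts uniformly in $n$ and in the limit, then let $n\to\infty$ for fixed $\delta$) then yields $I^n\to I$ in probability in $\D_\R[0,T]$.

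The main obstacle, and the step deserving the most care, is making the convergence $I^{n,>\delta}\to I^{>\delta}$ rigorous in the Skorokhod topology: evaluating $f_n(X^n_{s-},r)$ at jump times requires controlling the left limits $X^n_{s-}$, and jumps of $X^n$ and of $L^n$ occur simultaneously, so one must argue along the Skorokhod time-changes $\mu^n$ (as in Lemma~\ref{P:PK}) that $X^n\circ\mu^n\to X$ uniformly and that the large jumps of $L^n\circ\mu^n$ line up with those of $L$; one also needs $X$ and $L$ to have no common jumps with the deterministic grid points used, and that $\nu(\{|r|=\delta\})=0$, which can be arranged by choosing $\delta$ outside an at most countable set. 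Once the large-jump convergence is handled pathwise after the time-change, the small-jump estimate above is purely quantitative and routine, and the passage from a.s.-convergent subsequences back to convergence in probability is the usual subsequence argument.
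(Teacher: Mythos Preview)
Your proposal is correct and follows essentially the same route as the paper: part~(i) via passing to the limit in the martingale identity at continuity times with uniform integrability, and part~(ii) via the large/small jump decomposition, handling the small-jump piece by Doob's $L^2$ inequality and the It\^o isometry exactly as you wrote.

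The one technical difference worth noting is how the boundary $|r|=\delta$ is handled in the large-jump part. You propose a hard cutoff $\I(|r|>\delta)$ and deal with the resulting discontinuity by choosing $\delta$ with $\nu(\{|r|=\delta\})=0$, so that a.s.\ $L$ has no jumps of size exactly $\delta$; this works but requires a short extra argument that the large jumps of $L^n$ line up with those of $L$ after the time change. The paper instead introduces a smooth cutoff $\chi$ supported on $[\eps_0,\infty)$ and equal to $1$ on $[\eps,\infty)$, which makes the large-jump functional $Z\mapsto\sum_{s\le t}f_n(Z_1(s-),\Delta Z_2(s))\chi(|\Delta Z_2(s)|)$ continuous in the uniform metric so that Lemma~\ref{P:PK} applies directly; the price is an extra ``boundary'' term controlled by $\nu([\eps_0,\eps])$, which is then sent to $0$ by letting $\eps_0\nearrow\eps$ before $\eps\to0$. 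Either device resolves the obstacle you flagged in your last paragraph.
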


The proof of this Lemma is provided in the Appendix~\ref{A:L42}.

\subsection{Dirichlet processes}\label{S:Dirya}

Recall that a weak or strong solution to SDE \eqref{mainsde} is not
necessarily a semimartingale; it belongs to a more general class of
processes called Dirichlet processes. Thus to study properties of
solutions of our equation we need to develop some parts of
theory of Dirichlet processes. This is done in this section. We begin
with the following definitions.

\begin{Definition}[{\cite{Foll}}] We say that a continuous adapted process $(A_t)_{t\in[0,T]}$ is a \textit{process of zero energy} if $A_0=0$ and
\begin{equation*}
\lim_{\delta \rightarrow 0} \sup_{\pi_{T} \, : \,
   \mid \pi_{T} \mid < \delta} \E \left( \sum_{t_i\in\pi_T} \mid A_{t_{i+1}} - A_{t_{i}} \mid^2 \right )= 0
  \end{equation*}
where $\pi_{T}$ denotes a finite partition of $[0,T]$ and $|\pi_{T}|$ denotes the mesh size of the partition.
\end{Definition}

\begin{Definition}[{\cite{Foll}}] \label{D:decompos} We say that an adapted process $(X_t)_{t\in[0,T]}$ is a \textit{Dirichlet process} if
\begin{equation}\label{decompos}
X_t=M_t+A_t,\quad t\in[0,T],
\end{equation}
where $M$ is a square--integrable martingale and $A$ is an adapted process of zero energy.
\end{Definition}

It was proven in \cite{Foll} that such decomposition \eqref{decompos}
of a Dirichlet process $X$ is unique. Thus, we see that the class of
Dirichlet processes naturally extends the class of
semimartingales. Note however that since the process $A$ in
decomposition \eqref{decompos} might be of infinite variation, the integral
with respect to $A$ might be not well-defined in the classical
sense. The next definition extends the notion of a stochastic integral
to the class of integrals with respect to zero-energy
processes. We shall define the integral with respect to the
 zero--energy process $A$ as a limit in probability of the
 corresponding forward Riemann sums.
\begin{Definition}[{\cite[page~90]{CJMS}}]\label{D:Dirint}
Let $f\colon\R\to\R$ be a bounded continuous function with a bounded continuous derivative. Let $X$ be a Dirichlet process with decomposition \eqref{decompos}. For ${n\in\Z_+}$ let $D_n := \{t^n_i\} $ be a sequence of \textit{refining} (and non-random) partitions of $[0,T]$ whose mesh size tends to $0$ as $n\to\infty$. Then
\begin{equation*}
\int_s^t f(X_r) d A_r :=(\P) \lim_{n\to\infty} \sum_{t_i^n\in D_n,\,t_i^n\in[s,t)}
f(X_{t_i^n})(A_{t_{i+1}^n}-A_{t_i^n}),\quad 0\le s\le t \le T
\end{equation*}
(if exists), where the limit is taken in probability .
\end{Definition}

It was shown in \cite[Theorem~3.1]{CJMS} that the integral $\int_0^\cdot f(X_s) d A_s$ exists and does not depend on the choice of the sequence of refining partitions $D_n$.

We will need a couple of statements describing further properties of integrals with respect to the Dirichlet processes. Some of these results (and their proofs) are close in spirit to \cite[Lemma~2.3]{BC}.

In the first lemma we prove that under certain regularity conditions, the convergence in probability in the definition of the Dirichlet integral can be improved to convergence in $\SL_p$.
\begin{Lemma}\label{L:mainyoung}
Let $(X,A)$ be as in Definition~\ref{D:decompos}.
Let $f\colon\R\to\R$ be a bounded continuous function with a bounded continuous derivative. Suppose that for some $p_1, p_2>0$ and $\gamma_1, \gamma_2>0$ with $\gamma_1+\gamma_2>1$ and $1/h:=1/p_1+1/p_2\le 1$ there exist constants $C_f, C_A\ge1$ such that for any $s,t\in[0,T]$
\begin{align}
\E|f(X_t)-f(X_s)|^{p_1}&\le \left(C_{f}\right)^{p_1}|t-s|^{p_1\gamma_1},\,\,\E|f(X_t)|^{p_1}\le \left(C_{f}\right)^{p_1}, \label{condej1}\\
\E|A_t-A_s|^{p_2}&\le \left(C_{A}\right)^{p_2}|t-s|^{p_2\gamma_2} \label{condej2}.
\end{align}

\begin{itemize}
\item[{\rm(i)}] Then for any $0\le s\le t\le T$ the sequence of partial sums
\begin{equation*}
I_n:= \sum_{i=0}^{2^n-1} f(X_{t^i_n})(A_{t^{i+1}_n}-A_{t^i_n}),
\end{equation*}
where
 \begin{equation*}
 t^k_n:=s+k2^{-n}(t-s),\quad\text{for }k=0,1,\hdots,2^n;\,\, n\in\Z_+,
\end{equation*}
converges to $I:=\int_s^t f(X_r) dA_r$ in $\SL_h$.
\item[{\rm(ii)}] Moreover, there exists $C=C(T,\gamma_1,\gamma_2)>0$ such that for any $0\le s\le t\le T$, $n\in\Z_+$, we have the following estimate of the remainder term:
\begin{equation}\label{Lhnormdirrem}
\|I-I_n\|_{\SL_h}\le CC_{f}C_{A}2^{-n(\gamma_1+\gamma_2-1)}.
\end{equation}
\item[{\rm(iii)}] Finally, there exists $C=C(T,\gamma_1,\gamma_2)>0$ such that for any $0\le s\le t\le T$
\begin{equation}\label{Lhnormdir}
\|\int_s^t f(X_r) dA_r\|_{\SL_h}\le CC_{f}C_{A}(t-s)^{\gamma_2}.
\end{equation}
\end{itemize}
\end{Lemma}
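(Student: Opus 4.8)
The plan is to prove Lemma~\ref{L:mainyoung} by a Young-type / sewing-lemma argument applied to the forward Riemann sums, working in $\SL_h$ throughout. First I would record the elementary consequence of the hypotheses: for a partition point $t^i_n$ we have, by \eqref{condej1}, $\|f(X_{t^i_n})\|_{\SL_{p_1}}\le C_f$, and for any sub-interval $[u,v]\subset[0,T]$, $\|f(X_v)-f(X_u)\|_{\SL_{p_1}}\le C_f|v-u|^{\gamma_1}$ and $\|A_v-A_u\|_{\SL_{p_2}}\le C_A|v-u|^{\gamma_2}$. Applying H\"older's inequality with exponents $p_1,p_2$ (so that $1/h=1/p_1+1/p_2\le1$) then controls products such as $\|(f(X_v)-f(X_u))(A_w-A_v)\|_{\SL_h}\le C_fC_A|v-u|^{\gamma_1}|w-v|^{\gamma_2}$.

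Next, for a fixed $[s,t]$ and the dyadic partitions $\{t^k_n\}$, I would compare consecutive levels. Passing from level $n$ to level $n+1$ refines each interval $[t^i_n,t^{i+1}_n]$ by inserting the midpoint $m$; the difference of the two Riemann sums over this interval is exactly $(f(X_m)-f(X_{t^i_n}))(A_{t^{i+1}_n}-A_m)$, whose $\SL_h$-norm is at most $C_fC_A(2^{-n}(t-s))^{\gamma_1}(2^{-n}(t-s))^{\gamma_2}$. Summing over the $2^n$ intervals at level $n$ gives
\begin{equation*}
\|I_{n+1}-I_n\|_{\SL_h}\le 2^n\cdot C_fC_A\,(t-s)^{\gamma_1+\gamma_2}2^{-n(\gamma_1+\gamma_2)}=C_fC_A(t-s)^{\gamma_1+\gamma_2}2^{-n(\gamma_1+\gamma_2-1)}.
\end{equation*}
Since $\gamma_1+\gamma_2>1$ this is summable in $n$, so $(I_n)$ is Cauchy in $\SL_h$ and converges to some limit $I$; summing the tail from $n$ to $\infty$ yields \eqref{Lhnormdirrem} with a constant $C=C(\gamma_1,\gamma_2)$ absorbing the geometric series and the factor $T^{\gamma_1+\gamma_2}$. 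Because $\SL_h$-convergence implies convergence in probability, and the Dirichlet integral $\int_s^t f(X_r)\,dA_r$ is defined (via \cite[Theorem~3.1]{CJMS}) as the probability-limit of these very sums along refining partitions, the two limits coincide; this gives part~(i) and identifies $I$ as $\int_s^t f(X_r)\,dA_r$, making \eqref{Lhnormdirrem} precisely part~(ii).

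For part~(iii), I would bound $\|I\|_{\SL_h}\le \|I_0\|_{\SL_h}+\sum_{n\ge0}\|I_{n+1}-I_n\|_{\SL_h}$. The level-$0$ sum is the single term $f(X_s)(A_t-A_s)$, with $\|I_0\|_{\SL_h}\le C_fC_A(t-s)^{\gamma_2}$ by H\"older and \eqref{condej1}--\eqref{condej2}; the remaining series is bounded by $C_fC_A(t-s)^{\gamma_1+\gamma_2}\sum_{n\ge0}2^{-n(\gamma_1+\gamma_2-1)}\le CC_fC_A(t-s)^{\gamma_1+\gamma_2}$, and since $t-s\le T$ and $\gamma_1>0$ the factor $(t-s)^{\gamma_1}$ is bounded by $T^{\gamma_1}$, so the whole thing is $\le CC_fC_A(t-s)^{\gamma_2}$ with $C=C(T,\gamma_1,\gamma_2)$. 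This yields \eqref{Lhnormdir}.

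The one genuinely delicate point — and the step I expect to require the most care — is the bookkeeping that turns the level-by-level midpoint refinement into the stated dyadic sums and that keeps the constants uniform in $s,t\in[0,T]$ and in $n$: one must make sure the telescoping $I=I_n+\sum_{k\ge n}(I_{k+1}-I_k)$ is justified by the already-established $\SL_h$-Cauchy property, that the geometric series is summed cleanly, and that the a priori assumption $C_f,C_A\ge1$ is used only to absorb cross terms harmlessly. Everything else — H\"older's inequality, the midpoint identity for the difference of Riemann sums, and the identification with the \cite{CJMS} integral — is routine given the hypotheses and the cited results.
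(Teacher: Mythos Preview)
Your proposal is correct and follows essentially the same approach as the paper: the same midpoint-refinement identity for $I_{n+1}-I_n$, the same H\"older/Minkowski estimate giving the geometric decay $2^{-n(\gamma_1+\gamma_2-1)}$, the same Cauchy-in-$\SL_h$ argument, and the same identification of the $\SL_h$-limit with the Dirichlet integral via its definition as a probability limit along refining partitions. The only cosmetic difference is the order of presentation---the paper first invokes the probability convergence from Definition~\ref{D:Dirint} and then shows the $\SL_h$-Cauchy property, whereas you establish the $\SL_h$-limit first and then identify it; both are valid.
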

\begin{proof}
(i) Fix $0\le s<t\le T$. For $n\in\Z_+$ put $D_n:=\{t^0_n, t^1_n,\dots,t^{2^n}_n\}$. Clearly $(D_n)_{n\in\Z_+}$ is a sequence of refining partitions of $[s,t]$. Therefore, it follows from Definition~\ref{D:Dirint} that $I_n$ converges to $I$ in probability as $n\to\infty$.

 On the other hand, it is easy to check that for any $n\in\Z_+$
 \begin{equation*}
I_{n+1}-I_n=\sum_{i=0}^{2^n-1} (f(X_{t^{2i+1}_{n+1}}-f(X_{t^{i}_{n}}))(A_{t^{i+1}_n}-A_{t^{2i+1}_{n+1}}).
 \end{equation*}
 Therefore, applying successively the Minkowski and H\"older inequalities, we get
 \begin{align}\label{a-la-Khoa}
\|I_{n+1}-I_n\|_{\SL_h}\le&\sum_{i=0}^{2^n-1} \|(f(X_{t^{2i+1}_{n+1}}-f(X_{t^{i}_{n}}))(A_{t^{i+1}_n}-A_{t^{2i+1}_{n+1}})\|_{\SL_h}\nn\\
\le&\sum_{i=0}^{2^n-1} \|f(X_{t^{2i+1}_{n+1}}-f(X_{t^{i}_{n}})\|_{\SL_{p_1}}\|(A_{t^{i+1}_n}-A_{t^{2i+1}_{n+1}})\|_{\SL_{p_2}}\nn\\
\le& C_{f}C_{A} 2^n 2^{-n(\gamma_1+\gamma_2)}(t-s)^{\gamma_1+\gamma_2},
 \end{align}
 where we also used conditions \eqref{condej1} and \eqref{condej2} and the fact that $1/h=1/p_1+1/p_2$. Similarly,
 \begin{equation}\label{inol}
\|I_{0}\|_{\SL_h}\le \|f(X_{s})(A_{t}-A_{s})\|_{\SL_h}\le \|f(X_{s})\|_{\SL_{p_1}}\|A_{t}-A_{s}\|_{\SL_{p_2}}\le C_{f}C_{A} (t-s)^{\gamma_2}.
\end{equation}

Since, by assumption, $\gamma_1+\gamma_2>1$, we get for any $m>n$
 \begin{equation}\label{mnraznost}
\|I_{m}-I_n\|_{\SL_h}\le \sum_{i=n}^{m-1}\|I_{i+1}-I_i\|_{\SL_h}\le C2^{-n(\gamma_1+\gamma_2-1)},
 \end{equation}
 where $C=C_{f}C_{A}(t-s)^{\gamma_1+\gamma_2}/(1-2^{-(\gamma_1+\gamma_2-1)})>0$. Taking in \eqref{mnraznost} $n=0$ and using \eqref{inol}, we deduce that $I_m\in\SL_h$. Furthermore, \eqref{mnraznost} implies that the sequence $(I_n)_{n\in\Z_+}$ is a Cauchy sequence in $\SL_h$. Hence it converges in $\SL_h$ to some $\wt I\in\SL_h$.
On the other hand, $I_n$ converges to $I$ in probability as $n\to\infty$. Therefore $I=\wt I$ a.s. and thus $I_n$ converges to $I$ in $\SL_h$.

(ii) Fix $n\in\Z_+$. Since $I_m$ converges to $I$ in $\SL_h$ as $m\to\infty$, we have
\begin{equation*}
\|I-I_n\|_{\SL_h}=\lim_{m\to\infty}\|I_m-I_n\|_{\SL_h}.
\end{equation*}
Combining this with \eqref{mnraznost}, we establish \eqref{Lhnormdirrem}.

(iii) Since $I_n$ converges to $I$ in $\SL_h$, we have
\begin{equation}\label{limlh}
\|I\|_{\SL_h}=\lim_{n\to\infty}\|I_n\|_{\SL_h}.
\end{equation}
Moreover, taking into account \eqref{a-la-Khoa} and \eqref{inol}, we deduce
\begin{align*}
\|I_n\|_{\SL_h}\le& \|I_0\|_{\SL_h} +\sum_{i=0}^{n-1}\|I_{i+1}-I_i\|_{\SL_h}\le C_{f}C_{A} (t-s)^{\gamma_2} +
 \frac{C_{f}C_{A}}{1-2^{-(\gamma_1+\gamma_2-1)}} (t-s)^{\gamma_1+\gamma_2}\\
 \le& CC_{f}C_{A}(t-s)^{\gamma_2},
\end{align*}
where $C=C(T,\gamma_1,\gamma_2)$. Combining this with \eqref{limlh} we obtain \eqref{Lhnormdir}.
\end{proof}

The second lemma of this subsection deals with the approximations of the integral with respect to a Dirichlet process.
\begin{Lemma}\label{L:poslednijboj}
Let $(X,A)$ be as in Definition~\ref{D:decompos}.
Let $(f_n)_{n\in\Z_+}$ be a sequence of functions $\R\to\R$ that are uniformly bounded, continuous, and have a bounded continuous derivative. Assume that all $f_n$ satisfy condition \eqref{condej1} with the same parameters $p_1$, $\gamma_1$, and $C_{f_1}$ for all $ n \in \Z_+$. 

Let $(b_n)_{n\in\Z_+}$ be a sequence of bounded continuous functions. Define
\begin{equation*}
A^n_t:=\int_0^t b_n(X_s)\,ds,\quad t\in[0,T].
\end{equation*}
Suppose that for each $t\in[0,T]$ the sequence $(A^n(t))_{n\in\Z_+}$ converges in probability to $A(t)$.

Assume that $A$ and all functions $A^n$, $n\in\Z_+$ satisfy condition \eqref{condej2} with the same parameters $C_A$, $p_2$, $\gamma_2$.

Finally, assume that $\gamma_1+\gamma_2>1$ and $1/p_1+1/p_2\le1$.

Then for any $t\in[0,T]$ we have
\begin{equation}\label{finalresult}
\int_0^t f_n(X_s)\,d A_s-\int_0^t f_n(X_s)b_n(X_s)\,d s\to 0,\,\,\text{in probability as $n\to\infty$.}
\end{equation}
\end{Lemma}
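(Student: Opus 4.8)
The plan is to write the difference in \eqref{finalresult} as a telescoping sum plus an error term, using the partial sums from Lemma~\ref{L:mainyoung}. Fix $t\in[0,T]$ and $n\in\Z_+$, and let $D_m=\{t^k_m\}_{k=0}^{2^m}$ with $t^k_m=k2^{-m}t$ be the dyadic partitions of $[0,t]$. Since $A^n_s=\int_0^s b_n(X_r)\,dr$ is absolutely continuous, the Riemann sums $\sum_k f_n(X_{t^k_m})(A^n_{t^{k+1}_m}-A^n_{t^k_m})$ converge, as $m\to\infty$, to $\int_0^t f_n(X_s)b_n(X_s)\,ds$ (ordinary dominated convergence, using boundedness of $f_n$ and $b_n$ and right-continuity of $X$). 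On the other hand, by Lemma~\ref{L:mainyoung}(i) applied with $f=f_n$ and the zero-energy process $A$, the same Riemann sums with $A$ in place of $A^n$ converge in $\SL_h$ to $\int_0^t f_n(X_s)\,dA_s$, where $1/h=1/p_1+1/p_2$. Hence for each fixed $n$,
\[
\int_0^t f_n(X_s)\,dA_s-\int_0^t f_n(X_s)b_n(X_s)\,ds
=\lim_{m\to\infty}\sum_{k=0}^{2^m-1} f_n(X_{t^k_m})\bigl[(A_{t^{k+1}_m}-A_{t^k_m})-(A^n_{t^{k+1}_m}-A^n_{t^k_m})\bigr],
\]
with the limit in $\SL_h$ (and hence in probability); call the right-hand side $R_n$.

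The key estimate is a uniform-in-$m$ bound on the $\SL_h$-norm of the summand, which I would obtain by mimicking the Cauchy-sequence argument in the proof of Lemma~\ref{L:mainyoung}. Set $B^n:=A-A^n$; by the hypotheses, $B^n$ satisfies \eqref{condej2} with constant $2C_A$ (and the same $p_2,\gamma_2$), and for each fixed $s$, $B^n_s\to0$ in probability, while $\sup_n\E|B^n_s|^{p_2}<\infty$, so in fact $B^n_s\to0$ in $\SL_{p_2'}$ for every $p_2'<p_2$ by uniform integrability. Writing $S^n_m:=\sum_k f_n(X_{t^k_m})(B^n_{t^{k+1}_m}-B^n_{t^k_m})$ and comparing $S^n_{m+1}-S^n_m$ exactly as in \eqref{a-la-Khoa}, H\"older plus \eqref{condej1} for $f_n$ and the $B^n$-analogue of \eqref{condej2} give $\|S^n_{m+1}-S^n_m\|_{\SL_h}\le C\,C_{f_1}C_A\,2^{-m(\gamma_1+\gamma_2-1)}$, uniformly in $n$; summing over $m\ge m_0$ the tail is $\le C\,2^{-m_0(\gamma_1+\gamma_2-1)}$, again uniformly in $n$. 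Thus $\|R_n-S^n_{m_0}\|_{\SL_h}\le C\,2^{-m_0(\gamma_1+\gamma_2-1)}$ for every $n$ and every $m_0$.

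Now the argument splits: given $\eps>0$, first choose $m_0$ so large that $C\,2^{-m_0(\gamma_1+\gamma_2-1)}<\eps$ (possible since $\gamma_1+\gamma_2>1$), uniformly in $n$; then $\|R_n\|_{\SL_h}\le \eps+\|S^n_{m_0}\|_{\SL_h}$. For this fixed $m_0$, the finite sum $S^n_{m_0}=\sum_{k=0}^{2^{m_0}-1} f_n(X_{t^k_{m_0}})(B^n_{t^{k+1}_{m_0}}-B^n_{t^k_{m_0}})$ tends to $0$ in probability as $n\to\infty$: each factor $B^n_{t^k_{m_0}}\to0$ in probability while $\{f_n(X_{t^k_{m_0}})\}_n$ is bounded in $\SL_{p_1}$ uniformly, so each product $\to0$ in probability (a bounded-in-$\SL_{p_1}$ times tends-to-zero-in-probability term tends to zero in probability). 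Hence $\limsup_n \P(|R_n|>2\eps)\le \limsup_n \P(|S^n_{m_0}|>\eps)=0$ for the fixed $m_0$, and letting $\eps\downarrow0$ gives $R_n\to0$ in probability, which is \eqref{finalresult}.

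The main obstacle is the interchange of the two limits $m\to\infty$ and $n\to\infty$: one cannot pass $n\to\infty$ inside the dyadic sum before $m\to\infty$ without the uniform-in-$n$ tail bound. This is exactly what the hypothesis $\gamma_1+\gamma_2>1$, together with the \emph{uniform} constants $C_{f_1}$ (for all $f_n$) and $C_A$ (for $A$ and all $A^n$), is designed to provide; the rest is bookkeeping. A minor technical point is that Lemma~\ref{L:mainyoung} and Definition~\ref{D:Dirint} are stated for a single bounded $C^1$ function, so one should note that each $f_n$ individually qualifies, and the uniformity is extracted by hand from the proof of Lemma~\ref{L:mainyoung} rather than from its statement.
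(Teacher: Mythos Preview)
Your proof is correct and follows essentially the same route as the paper's: both arguments fix a dyadic level $m_0$, use the uniform-in-$n$ remainder bound $C\,2^{-m_0(\gamma_1+\gamma_2-1)}$ coming from Lemma~\ref{L:mainyoung}(ii) (you re-derive it for $B^n=A-A^n$, the paper applies it separately to $A$ and $A^n$), and then let $n\to\infty$ in the finite Riemann sum using $A^n_s\to A_s$ in probability and the uniform bound on $f_n$. The only cosmetic difference is that you package $I_n-J_n$ into a single object $R_n=\lim_m S^n_m$, whereas the paper keeps the two integrals and their Riemann sums $I_{n,m},J_{n,m}$ separate throughout.
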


\begin{proof}
Fix $t\in[0,T]$, $\eps>0$. For brevity, let us denote
\begin{equation*}
I_n:= \int_0^t f_n(X_s)dA_s,\quad J_n:=\int_0^t f_n(X_s)b_n(X_s)ds.
\end{equation*}
For $m\in\Z_+$ let $D_m:=(t_m^k)_{k\in[0,2^m]}$ be the $m$-th dyadic partition of $[0,t]$, i.e., $t_m^k:=kt2^{-m}$.
Consider the corresponding Riemann sums with respect to $D_m$:
\begin{equation*}
I_{n,m}:= \sum_{i=0}^{2^m-1} f_n(X_{t^i_m})(A_{t^{i+1}_m}-A_{t^i_m}),\quad
J_{n,m}:= \sum_{i=0}^{2^m-1} f_n(X_{t^i_m})(A^n_{t^{i+1}_m}-A^n_{t^i_m}).
\end{equation*}
Clearly, for any $n,m\in\Z_+$
\begin{align}\label{otzenkaver}
\P(|I_n-J_n|>\eps)&\le\P(|I_n-I_{n,m}|>\eps/3)+\P(|I_{n,m}-J_{n,m}|>\eps/3)+\P(|J_{n,m}-J_n|>\eps/3)\nn\\
&\le \frac3\eps\E|I_n-I_{n,m}|+\frac3\eps\E|J_{n,m}-J_n|+\P(|I_{n,m}-J_{n,m}|>\eps/3).
\end{align}

We apply Lemma~\ref{L:mainyoung}(ii) to the functions $f_n$ and $A$. We get
\begin{equation}\label{Ipartsum}
\E|I_n-I_{n,m}|\le C C_{f_1} C_A2^{-m(\gamma_1+\gamma_2-1)},
 \end{equation}
for some universal constant $C>0$ that does not depend on $n$, $m$. In a similar way, applying Lemma~\ref{L:mainyoung}(ii) to the functions $f_n$ and $A_n$, we deduce
\begin{equation}\label{Jpartsum}
\E|J_n-J_{n,m}|\le C C_{f_1} C_A2^{-m(\gamma_1+\gamma_2-1)}.
\end{equation}
Denote $K:=\sup_{i\in\Z_+}\|f_i\|<\infty$. Note that
\begin{align*}
\P(|I_{n,m}-J_{n,m}|>\eps/3)&\le \P(K\sum_{i=0}^{2^m-1}(A_{t^{i+1}_m}-A^n_{t^{i+1}_m}-A_{t^i_m}+A^n_{t^i_m})>\eps/3)\\
&\le 2\sum_{i=0}^{2^m}\P(|A_{t^{i}_m}-A^n_{t^{i}_m}|>\frac{\eps}{6K2^m})\longrightarrow0\,\,\text{as $n\to\infty$}
\end{align*}
where used the convergence in probability of $A^n_s$ to $A_s$ for each fixed $s\in[0,T]$. Combining this with \eqref{otzenkaver}, \eqref{Ipartsum}, \eqref{Jpartsum}, we finally obtain
\begin{equation*}
\lim_{n\to\infty}\P(|I_n-J_n|>\eps)\le C \eps^{-1} C_{f_1} C_A 2^{-m(\gamma_1+\gamma_2-1)}.
\end{equation*}
Since $m$ is arbitrary, by passing to the limit as $m\to\infty$, we get \eqref{finalresult}.
\end{proof}

\section{Proof of Proposition \ref{p:resolvent}: analysis of the resolvent equation}\label{S:2}
The primary purpose of this section is to prove Proposition
\ref{p:resolvent}. We will follow an approach similar to
\cite{Pr12}. We begin the analysis of equation \eqref{me} with the
case $f=0$.

\begin{Lemma}\label{L:1}
The resolvent equation \eqref{me} with $\lambda\ge1$, $f=0$, $g\in\C^\eta$, $\eta>-\alpha$ has a unique solution in the class of bounded functions. Furthermore, for each $\gamma\in[0\vee\eta,\alpha+\eta)$, this solution $u^\lambda_{0,g}\in \C^\gamma$ and there exists a constant $C= C(\eta, \gamma)>0$ such that
\begin{equation}\label{estl1}
\|u^\lambda_{0,g} \|_\gamma \le C \lambda^{-1-\eta/\alpha+\gamma/\alpha}\|g\|_{\eta}.
\end{equation}

\end{Lemma}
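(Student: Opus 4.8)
The plan is to exhibit the solution via the resolvent representation and to read off its regularity from the smoothing bounds for the stable semigroup. The natural candidate is
$$u:=\int_0^\infty e^{-\lambda t}\,P_tg\,dt,$$
which formally realizes $(\lambda-\OL_\alpha)^{-1}g$. As a preliminary I would extend the estimates of Lemma~\ref{P:1}(ii)--(iii) from $\C^\infty_b$ to an arbitrary $g\in\C^\eta$: mollifying, $g_\eps:=g*\rho_\eps\in\C^\infty_b$ with $\|g_\eps\|_\eta\le\|g\|_\eta$ (convolution with an $L^1$-normalized kernel does not increase Littlewood--Paley blocks) and $g_\eps\to g$ in $\S'$, hence $P_tg_\eps\to P_tg$ in $\S'$; by lower semicontinuity of $\|\cdot\|_\gamma$ under $\S'$-convergence this gives $\|P_tg\|_\gamma\le Ct^{(\eta-\gamma)/\alpha}\|g\|_\eta$ for $t\in(0,1]$ and $\|P_tg\|_\gamma\le C\|g\|_\eta$ for $t\ge1$, whenever $\gamma\ge0\vee\eta$.

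With these at hand the norm bound is a short computation: splitting $\int_0^\infty=\int_0^1+\int_1^\infty$,
$$\|u\|_\gamma\le C\|g\|_\eta\Big(\int_0^1 e^{-\lambda t}t^{\frac{\eta-\gamma}\alpha}\,dt+\int_1^\infty e^{-\lambda t}\,dt\Big).$$
Since $\gamma<\alpha+\eta$ we have $\frac{\eta-\gamma}\alpha>-1$, so the substitution $s=\lambda t$ bounds the first integral by $\Gamma\!\big(1+\frac{\eta-\gamma}\alpha\big)\lambda^{-1-\eta/\alpha+\gamma/\alpha}$, while the second is at most $\lambda^{-1}\le\lambda^{-1-\eta/\alpha+\gamma/\alpha}$ because $\gamma\ge\eta$ and $\lambda\ge1$. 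This yields \eqref{estl1} for every $\gamma\in[0\vee\eta,\alpha+\eta)$; in particular the Bochner integral defining $u$ converges in $\C^\gamma$, and choosing some $\gamma\in(0,\alpha+\eta)$ (nonempty as $\eta>-\alpha$) shows that $u$ is a bounded continuous function.

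That $u$ solves \eqref{me} with $f=0$ in the distributional sense is checked by testing against $\phi\in\S$: using symmetry of $L$ (so $\la P_tg,\phi\ra=\la g,P_t\phi\ra$), the identity $\frac{d}{dt}P_t\phi=\OL_\alpha P_t\phi=P_t\OL_\alpha\phi$, and integration by parts in $t$ (the boundary term at $t=\infty$ vanishes since $t\mapsto\la g,P_t\phi\ra$ is bounded, and at $t=0$ it equals $\la g,\phi\ra$), one obtains $\la\lambda u-\OL_\alpha u,\phi\ra=\la g,\phi\ra$. For uniqueness, if $u_1,u_2$ are bounded solutions then $v:=u_1-u_2$ is bounded with $\lambda v-\OL_\alpha v=0$ in $\S'$; setting $w(t):=P_tv$ for $t>0$ we have $\snorm{w(t)}\le\snorm{v}$, and since $\OL_\alpha$ commutes with $P_t$, $\frac{d}{dt}w(t)=P_t\OL_\alpha v=\lambda w(t)$, so $w(t)=e^{\lambda(t-t_0)}w(t_0)$ for all $t,t_0>0$; letting $t\to\infty$ (using $\lambda\ge1>0$) forces $w(t_0)=0$ for every $t_0>0$, and then $t_0\downarrow0$ gives $v=0$.

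The step I expect to demand the most care is the low-regularity bookkeeping: making rigorous that $u$ is a well-defined tempered distribution genuinely solving \eqref{me}, pushing the semigroup estimates of Lemma~\ref{P:1} down to $g\in\C^\eta$ with $\eta$ possibly close to $-\alpha$, and running the uniqueness argument for solutions assumed only bounded (not a priori continuous or in $\mathcal{D}(\OL_\alpha)$). Everything else — the $\int_0^1$ Gamma-integral and the juggling of powers of $\lambda$ using $\lambda\ge1$ — is routine. This lemma then serves as the $f=0$ base case from which the full statement of Proposition~\ref{p:resolvent} is recovered by treating $fu'$ as a perturbation via a fixed-point/continuity argument.
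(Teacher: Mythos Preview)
Your proposal is correct and the existence/estimate part follows the paper's route closely: represent the solution by the resolvent formula $u=\int_0^\infty e^{-\lambda t}P_tg\,dt$, split the integral at $t=1$, and read off \eqref{estl1} from the semigroup smoothing bounds of Lemma~\ref{P:1}. The paper differs only in that it first establishes the formula and estimate for $g\in\C_b^\infty$ (citing \cite[Theorem~3.3]{Pr12}) and then passes to general $g\in\C^\eta$ by approximating with smooth $g_n$ and showing that the $u_n$ form a Cauchy sequence in $\C^\gamma$; you instead extend the semigroup estimates of Lemma~\ref{P:1} to $g\in\C^\eta$ up front by mollification and lower semicontinuity, and work with the resolvent integral directly, which is slightly more economical. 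For uniqueness the two proofs genuinely diverge: the paper argues via Hille--Yosida and the fact that $\C_c^\infty$ is a core for $\OL_\alpha$, deducing that $\langle v,h\rangle=0$ for every $h\in\C_c^\infty$; your argument instead exploits that $t\mapsto P_tv$ would have to grow like $e^{\lambda t}$ while remaining bounded by $\snorm{v}$. Both lines are sound; your semigroup argument is shorter but, as you correctly flag, justifying $\frac{d}{dt}P_tv=\lambda P_tv$ when $v$ is only assumed bounded (so that $\OL_\alpha v$ is a priori merely distributional and $p_t\notin\S$) is exactly the low-regularity bookkeeping that needs care --- one has to check that $\OL_\alpha$ commutes with convolution by $p_t$ in this setting.
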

\begin{proof}
 We begin with uniqueness.
Let $u_1,u_2$ be two bounded solutions of \eqref{me} with $\lambda>0$, $f=0$, $g\in\C^\eta$. Then the function $v:=u_1-u_2$ is obviously bounded and we have $\OL_{\alpha} v = \lambda v$. Take any test function $\phi\in\S$. It follows from the definition of the solution that
\begin{equation} \label{us2}
\la v,\OL_{\alpha}\phi-\lambda\phi\ra =0.
\end{equation}
By the Hile--Yosida theorem, see, e.g., \cite[Theorem 31.3 (i)]{Sato99}, for any $h\in \C_c^{\infty}$ there exists $\psi\in {\mathcal D}(\OL_{\alpha})$ such that
\begin{equation} \label{dense1}
 \OL_{\alpha}\psi -\lambda\psi = h.
\end{equation}
It follows from \cite[Theorem 31.5]{Sato99} that $\C_c^\infty$ is a core for $\OL_\alpha.$ By definition of a core, we have that for each
$\psi \in {\mathcal D}(\OL_\alpha)$ there exists a sequence $\psi_n \in \C_c^\infty$, $n\in\Z_+$ such that
\begin{equation} \label{dense2}
\snorm{\psi_n - \psi} \rightarrow 0 \mbox{ and }
\snorm{\OL_\alpha \psi_n - \OL_\alpha \psi} \rightarrow 0 \text{ as $n\to\infty$}.\end{equation}
Since $\C_c^\infty \subset \S$, we deduce from (\ref{dense1}) and (\ref{dense2}) that for any $h \in \C_c^\infty$ there exists $\psi_n \in S$ such that
\begin{equation} \label{dense3}
\lim_{n\to\infty}\snorm{ h - (\OL_\alpha \psi_n - \lambda \psi_n)}=0.
\end{equation}
Therefore \eqref{dense3}, \eqref{us2} and the dominated convergence theorem imply
 \begin{equation*}
\la v,h\ra =0
\end{equation*}
for all $h \in \C_c^\infty$. This yields $v = 0$ and completes the proof of uniqueness.

To show existence of solution and to establish estimate \eqref{estl1}
we adapt some ideas from the proof of \cite[Theorem~3.3]{Pr12}. Fix
$\gamma\in[0\vee\eta,\alpha+\eta)$ and take any $\alpha'\in(\alpha,2)$. We
 begin with the case $g\in\C^\infty_b$. It was shown in
 \cite[Theorem~3.3]{Pr12} that in this case equation \eqref{me} with
 $f=0$ has a unique solution in $\C^{\alpha'}$. This solution is
 given by
\begin{equation*}
u^{\lambda}_{0,g}(x):=\int_0^\infty e^{-\lambda t} P_t g(x)\, dt,\quad x\in\R,
\end{equation*}
where the semigroup $(P_t)_{t\ge0}$ is as in Definition (\ref{gensemgp}).
Hence, using Lemma~\ref{P:1}(ii, iii), we obtain
\begin{align}\label{firstrez}
\|u^{\lambda}_{0,g}\|_{\gamma}&\le\int_0^{+\infty} e^{-\lambda t} \|P_t g\|_{\gamma}\, dt=
\int_0^1 e^{-\lambda t} \|P_t g\|_{\gamma}\, dt+\int_1^{+\infty} e^{-\lambda t} \|P_t g\|_{\gamma}\, dt\nn\\
&\le
C \|g\|_{\eta}\int_0^1 e^{-\lambda t} t^{-(\gamma-\eta)/\alpha}\, dt+
C \|g\|_{\eta}\int_1^\infty e^{-\lambda t} \, dt\nn\\
&\le
C \lambda^{-1-\eta/\alpha+\gamma/\alpha}\|g\|_{\eta},
\end{align}
where the last inequality follows from the fact that $\lambda\ge1$.
Now take any $g\in\C^\eta$. Let $g_n\in\C^\infty_b$, $n\in\Z_+$ be a sequence of approximations of $g$ such that $\|g_n-g\|_\eta\to0$ as $n\to\infty$.

Consider the function $u_n:=u^{\lambda}_{0,g_n}$. By above, $u_n$ is well--defined and $u_n\in \C^{\alpha'}$.

Let $v_{n,m}:=u_n-u_m$, $n,m\in \Z_+$. We see that $v_{n,m}$ solves \eqref{me} with $f=0$ and the right--hand side $g_n-g_m$. Furthermore, since $u_n, u_m\in\C^{\alpha'}$, we see that $v_{n,m}\in\C^{\alpha'}$. Recall that the solution to \eqref{me} with $f=0$ and smooth right--hand side is unique in class $\C^{\alpha'}$ by \cite[Theorem~3.3]{Pr12}. Therefore we can apply \eqref{firstrez} to obtain
\begin{equation*}
\|u_n-u_m\|_{\gamma}=\|v_{n,m}\|_{\gamma}=\|u^{\lambda}_{0,g_n-g_m}\|_{\gamma}\le C \lambda^{-1-\eta/\alpha+\gamma/\alpha}\|g_n-g_m\|_{\eta}.
\end{equation*}
This implies that $(u_n)_{n\in\Z_+}$ is a Cauchy sequence in $\C^\gamma$ and hence there exists some $u\in \C^\gamma$ such that $\|u-u_n\|_{\gamma}\to0$ as $n\to\infty$. We claim that $u$ is a solution to \eqref{me} with $f=0$ and the right--hand side $g$. Indeed,
\begin{align*}
\|\lambda u -\OL_{\alpha} u - g\|_{\gamma-\alpha}&= \|\lambda (u-u_n) -\OL_{\alpha} (u-u_n) - (g-g_n)\|_{\gamma-\alpha}\\
&\le \lambda\| u-u_n\|_{\gamma-\alpha}+ \|\OL_{\alpha} (u-u_n)\|_{\gamma-\alpha} + \|g-g_n\|_{\gamma-\alpha}\\
&\le \lambda\| u-u_n\|_{\gamma}+ \|u-u_n\|_{\gamma} + \|g-g_n\|_{\eta},
\end{align*}
where we used the fact that $\gamma-\alpha\le \eta$ , Lemma~\ref{L:31}, and Lemma~\ref{P:1}. By passing to the limit as $n\to\infty$ we deduce
\begin{equation*}
\lambda u -\OL_{\alpha} u - g=0,
\end{equation*}
and hence $u$ indeed solves \eqref{me} with $f=0$ and the right--hand side $g$. To complete the proof it remains to note that
\begin{equation*}
\|u\|_{\gamma}\le \|u_n\|_{\gamma}+\|u-u_n\|_{\gamma}\le C \lambda^{-1-\eta/\alpha+\gamma/\alpha}\|g_n\|_{\eta}+\|u-u_n\|_{\gamma}.
\end{equation*}
Again by passing to the limit as $n\to\infty$, we obtain \eqref{estl1}.
\end{proof}

Now we are ready to prove the first part of Proposition \ref{p:resolvent}.
\begin{proof}[Proof of Proposition \ref{p:resolvent}(i)]
We begin by proving a crucial inequality that will be used many times in this proposition. Let $\delta>-\alpha$ and
let $u$ be any bounded solution to \eqref{me} with $f,g\in\C^{\delta}$, $\lambda \ge 1$. Obviously
\begin{equation}\label{otherform}
\lambda u -\OL_{\alpha} u = f u'+ g.
\end{equation}
Using \eqref{otherform} and Lemma~\ref{L:1}, we derive that for any $\gamma\in[0\vee\delta,\alpha+\delta)$ there exists $C=C(\delta,\gamma )>0$ such that
\begin{align}\label{step1}
\|u \|_{\gamma}&\le C\lambda^{-1-\frac\delta\alpha+\frac\gamma\alpha}\| f u'+ g\|_\delta\le
C\lambda^{-1-\frac\delta\alpha+\frac\gamma\alpha}(\|g\|_\delta+\|f\|_{\delta}\|u'\|_{\rho})\nn\\
&\le C\lambda^{-1-\frac\delta\alpha+\frac\gamma\alpha}(\|g\|_\delta+\|f\|_{\delta}\|u\|_{\rho+1}),
\end{align}
whenever $\delta+\rho>0$ and $\rho\ge\delta$. Here we have used inequalities from Section \ref{S:11}, specifically \eqref{product} in the second inequality and \eqref{der} in the third inequality. We will apply (\ref{step1}) repeatedly. Now we can start proving \eqref{est}.

First we deal with the case $\eta>0$. By \cite[Theorem~3.4]{Pr12} equation \eqref{me} has a solution $u^{\lambda}_{f,g}\in\C^{\alpha}$ for all $\lambda>0$. We will show that
\begin{equation}\label{above}
u^{\lambda}_{f,g}\in\C^{\gamma}\,\,\text{ for any $\lambda>0$, $\gamma<\alpha+\eta$}.
\end{equation}
Assume the converse. Then there exist $\alpha\le \gamma_1<\gamma_2<\alpha+\eta$, such that $\|u^{\lambda}_{f,g}\|_{\gamma_1}<\infty$, $\|u^{\lambda}_{f,g}\|_{\gamma_2}=\infty$ and $\gamma_2-\gamma_1<\alpha-1$. We apply \eqref{step1} with $\gamma=\gamma_2$, $\delta=(\gamma_1-1)\wedge\eta$, $\rho=\gamma_1-1$. Note that all the additional constraints are satisfied: since $\gamma_2< \alpha+\eta$ and $\gamma_2< \alpha+\gamma_1-1$, we see that
$\gamma_2< \alpha+(\gamma_1-1)\wedge\eta$ and thus $\gamma_2\in[0\vee\delta,\alpha+\delta)$. Using
the fact that by assumption $f,g\in\C^\eta$, we derive
\begin{equation*}
\|u^{\lambda}_{f,g} \|_{\gamma_2}\le C(\lambda) (\|g\|_{(\gamma_1-1)\wedge\eta}+\|f\|_{(\gamma_1-1)\wedge\eta}\|u^{\lambda}_{f,g}\|_{\gamma_1})<\infty,
\end{equation*}
However this contradicts the assumption $\|u^{\lambda}_{f,g}\|_{\gamma_2}=\infty$. Thus $u^{\lambda}_{f,g}\in\C^{\gamma}$ for any $\gamma<\alpha+\eta$.

We apply \eqref{step1} again, but now with a different set of parameters: we take $\gamma=\eta+1$, $\delta=\rho=\eta$. Then we obtain
\begin{equation}\label{step2eta}
\|u^{\lambda}_{f,g} \|_{\eta+1}\le C_1\lambda^{-1+\frac1\alpha}(\|g\|_\eta+\|f\|_{\eta}\|u^{\lambda}_{f,g}\|_{\eta+1}),
\end{equation}
where $C_1>0$. By above, $\|u^{\lambda}_{f,g} \|_{\eta+1}<\infty$. Take now $\lambda_1\ge 1$ such that
\begin{equation*}
1- C_1\lambda_1^{-1+\frac1{\alpha}} M \ge 1/2.
\end{equation*}
Since $C_1$ depends only on $\eta$, we see that $\lambda_1$ depends only on $\eta, M$. For $\lambda\ge \lambda_1$  and $\|f\|_{\eta}\leq M$ we get from \eqref{step2eta}
\begin{equation}\label{step3eta}
\|u^{\lambda}_{f,g}\|_{\eta+1}\le 2C_1\lambda^{-1+\frac1\alpha}\|g\|_\eta\le 2C_1\|g\|_\eta.
\end{equation}
Finally, applying again \eqref{step1} with $\delta=\rho=\eta$ and $\gamma\in[\eta,\alpha+\eta)$ we get for $\lambda\ge \lambda_1$
\begin{equation*}
\|u^{\lambda}_{f,g}\|_{\gamma}\le C\lambda^{-1-\frac\eta\alpha+\frac\gamma\alpha}\|g\|_\eta(\|f\|_\eta+1),
\end{equation*}
where we used bound \eqref{step3eta}. This establishes \eqref{est} for $\eta>0$ when $\lambda \ge \lambda_1$.

Now we can treat the case $\eta\le0$. The problem here is that we do not know a priori that in this case \eqref{me} has a solution. Therefore we have to study approximations. We will still use \eqref{step1} as a main tool however with a different set of parameters (since $\eta$ is negative we cannot take $\delta=\rho=\eta$).

Thus, we start with considering any $f,g\in\C_b^\infty$, $\lambda>0$. By \eqref{above}, equation \eqref{me} has a solution
$u^\lambda_{f,g}\in\C_b^\infty$. We apply \eqref{step1} with $\gamma=\alpha/2+1/2$, $\delta=\eta$, $\rho=\alpha/2-1/2$.
Since $\eta>1/2-\alpha/2$ one can easily see that all the additional constraints on the parameters in \eqref{me} are satisfied. We get
\begin{equation*}
\|u^{\lambda}_{f,g} \|_{(1+\alpha)/2}(1- C_2\lambda^{-\frac12+\frac1{2\alpha}-\frac\eta\alpha}\|f\|_\eta)\le
C\lambda^{-\frac12+\frac1{2\alpha}-\frac\eta\alpha}\|g\|_\eta,
\end{equation*}
where $C_2>0$ and we have used the fact that $\|u^{\lambda}_{f,g} \|_{(1+\alpha)/2}<\infty$. Choose $\lambda_2\ge 1$ such that
\begin{equation*}
1- C_2\lambda_2^{-\frac12+\frac1{2\alpha}-\frac\eta\alpha}M \ge 1/2.
\end{equation*}
Similarly, $\lambda_2$ depends only $\eta, M$. For $\lambda\ge \lambda_2 = \lambda_2(\eta, M)$ and $\|f\|_{\eta}\leq M$ we get
\begin{equation}\label{rez1}
\|u^{\lambda}_{f,g} \|_{(1+\alpha)/2}\le 2C_2 \|g\|_\eta.
\end{equation}

Now we take any $f,g\in\C^\eta$. Similar to the proof of Lemma~\ref{L:1}, we
approximate $f$ and $g$ by the sequences $f_n,g_n\in\C_b^\infty$, correspondingly, such that
\begin{align*}
&\|f_n-f\|_\eta\to0,\, \|g_n-g\|_\eta\to0,\,\text{ as $n\to\infty$}\\
&\|f_n\|_\eta\le 2 \|f\|_\eta,\, \|g_n\|_\eta\le 2 \|g\|_\eta\,\quad \text{for all $n\in\Z_+$}.
\end{align*}

Consider the function $u_n\!:=u^\lambda_{f_n,g_n}$. By above, $u_n\in\C^\infty_b$. Put $v_{n,m}\!:=u_n-u_m$, $n,m\!\in\Z_+$.
It follows that $v_{n,m}\in\C^\infty_b$ and solves
\begin{equation}\label{mutant}
\lambda v_{n,m} -\OL_{\alpha} v_{n,m} - f_n v_{n,m}'= g_n-g_m+(f_m-f_n)u_m'.
\end{equation}
Clearly, the right--hand side of \eqref{mutant} is in $\C^{\infty}_b$. Therefore, using uniqueness theorem for equation \eqref{me} with smooth coefficients (\cite[Theorem~3.4]{Pr12}) we see that $v_{n,m}$ is the unique solution to \eqref{mutant}. Thus, we can apply bound \eqref{rez1}. We make use of the fact that $\|f_n\|_\eta\le 2 \|f\|_\eta$ to get for $\lambda\ge
\lambda_2(\eta,2 M)$ and $\|f\|_\eta \leq M$
\begin{align*}
\|u_n-u_m \|_{(1+\alpha)/2}&=\|v_{n,m} \|_{(1+\alpha)/2}\le C (\|g_n-g_m\|_\eta
+\|(f_m-f_n)u_m'\|_{\eta})\\
&\le C(\|g_n-g_m\|_\eta +\|f_m-f_n\|_{\eta}\|u_m'\|_{\alpha/2-1/2})\\
&\le C (\|g_n-g_m\|_\eta
+\|f_m-f_n\|_{\eta}\|g_m\|_{\eta}),
\end{align*}
where in the final inequality we used bound \eqref{rez1} once again. Recalling that $\|g_m\|_\eta\le 2 \|g\|_\eta$, we see that the sequence $(u_n)_{n\in\Z_+}$ is a Cauchy sequence in $\C^{(1+\alpha)/2}$. Hence there exists $u\in\C^{(1+\alpha)/2}$ such that
$\|u_n-u\|_{(1+\alpha)/2}\to0$ as $n\to\infty$. Applying Lemma~\ref{P:1}  we derive for any $n\in\Z_+$
\begin{align*}
\|\lambda u -\OL_{\alpha} u - &f u'- g\|_{(1-\alpha)/2}=\|\lambda (u-u_n) -\OL_{\alpha} (u-u_n) - f (u-u_n)'- (g-g_n)\|_{(1-\alpha)/2}\\
&\le \lambda\| u-u_n\|_{(1+\alpha)/2}+C\|u-u_n\|_{(1+\alpha)/2}+C\| f\|_\eta \|u-u_n\|_{(1+\alpha)/2}+C \|g-g_n\|_{\eta}.
\end{align*}
After taking the limit as $n\to\infty$, we see that $u$ solves \eqref{me}.

Note that thanks to \eqref{rez1}
\begin{equation*}
\|u\|_{(1+\alpha)/2}\le \lim_{n\to\infty} \|u-u_n\|_{(1+\alpha)/2}+\limsup_{n\to\infty}\|u_n\|_{(1+\alpha)/2}\le C\|g\|_\eta.
\end{equation*}
Using this inequality and the fact that $u$ solves \eqref{me}, we apply again \eqref{step1} with ${\gamma\in[0,\alpha+\eta)}$,
$\delta=\eta$, $\rho= \alpha/2-1/2$ to obtain
\begin{equation*}
\|u \|_{\gamma}\le C\lambda^{-1-\frac\eta\alpha+\frac\gamma\alpha}(\|g\|_\eta+C \|f\|_{\eta}\|g\|_\eta).
\end{equation*}
This establishes \eqref{est} for $\eta \leq 0$ when $\lambda > \lambda_2(\eta, 2M)$.
Set
 \[ \lambda_0(\eta, M): = \begin{cases} \lambda_1(\eta,M) & \mbox{ if } \eta >0 \\ \lambda_2(\eta,2M) & \mbox{ if } \eta \leq 0 \end{cases} \] to complete the proof of (\ref{est}).

Thus it remains to show uniqueness. If $u_1,u_2\in \C^{(1+\alpha)/2}$ are two solutions of \eqref{me}, then for $v:=u_1-u_2$ we obviously have $v\in \C^{(1+\alpha)/2}$ and
\begin{equation*}
\lambda v-\OL_{\alpha} v =f v'.
\end{equation*}
Therefore the right--hand side of the above equation is well-defined and is in $\C^\eta$. Therefore, we can apply  \eqref{step1} with $\gamma=\alpha/2+1/2$, $\delta=\eta$, $\rho= \alpha/2-1/2$ to obtain
\begin{equation*}
\|v\|_{(1+\alpha)/2}\le C\lambda^{-\frac12+\frac1{2\alpha}-\frac\eta\alpha}\|f\|_\eta \|v\|_{(1+\alpha)/2}\le\frac12
\|v\|_{(1+\alpha)/2},
\end{equation*}
whenever $\lambda\ge\lambda_0(\eta, M)$ and $\|f\|_\eta \leq M$. As $\|v\|_{(1+\alpha)/2} < \infty$, this implies that $\|v\|_{(1+\alpha)/2}=0$ and hence $v=0$. This establishes uniqueness of the solutions to \eqref{me}.
\end{proof}

\begin{proof}[Proof of Proposition \ref{p:resolvent} (ii)] Denote $u:=u^{\lambda}_{f,g}$, $u_n:=u^{\lambda_n}_{f_n,g_n}$, $v_n:=u-u_n$. By part (i), $v_n\in\C^{(1+\alpha)/2}$ and solves
 \begin{equation*}
 \lambda v_n -\OL_{\alpha} v_n - f v_n'= g_n-g+u_n'(f-f_n).
 \end{equation*}
 Therefore, by Proposition \ref{p:resolvent}(i) for any $\lambda\ge \lambda_0(\eta , M)$
 \begin{align*}
 \|u_n-u \|_{(1+\alpha)/2}&=\|v_n \|_{(1+\alpha)/2}\le C\lambda^{-\frac12+\frac1{2\alpha}-\frac\eta\alpha} (\|g_n-g\|_\eta+
 \|f_n-f\|_\eta\|u_n \|_{(1+\alpha)/2}) (1+ \|f \|_\eta)\\
 &\le C(1+M)^2\lambda^{-\frac12+\frac1{2\alpha}-\frac\eta\alpha} (\|g_n-g\|_\eta+
 \|f_n-f\|_\eta\|g_n \|_{(1+\alpha)/2})\to0.\qedhere
 \end{align*}
 \end{proof}

\section{Proof of Theorem \ref{T:1}}
\label{sec:5}
In this section we present the proof of our main result, Theorem
\ref{T:1}. We will follow the
sketch of the proof presented in Section~\ref{sec:overview}. We will
rely on the machinery related to the resolvent equation developed in
Section~\ref{S:2} and Preliminaries from Section~\ref{S:prelimka}. Recall that we have
fixed $\alpha\in(1,2)$, $\beta\in(1/2-\alpha/2,0)$, $b\in\C^\beta$. the initial condition $x\in\R$ and the length of time interval $T>0$. Our goal
is to show that \eqref{mainsde} has a unique strong solution on time
interval $[0,T]$.

We begin with a very standard calculation of a second moment of a stochastic integral. We will use this result a couple of times and hence for the sake of completeness we decided to state it precisely.

\begin{Lemma}\label{L:vtorojmoment}
Let $L$ be an $\alpha$-stable L\'evy process, $\nu$ be its L\'evy measure and $\wt N$ be the compensated Poisson measure associated with $L$. Let $f\colon [0,T]\times\R_+\times\Omega\to\R$ be a measurable function adapted to the filtration of $L$. Suppose that there exist $\gamma\in(\alpha/2,1]$ and constant $C_f>0$ such that $\P$--a.s.
\begin{equation}\label{fbound}
|f(s,r,\omega)|\le C_f(|r|^\gamma\wedge1),\quad s\in[0,T],\, r\in\R.
\end{equation}
Then there exists a constant $C=C(T)>0$ such that for any stopping times $\tau_1,\tau_2\in[0,T]$ with $\tau_1\le \tau_2$ we have
\begin{equation}\label{secondmomentstoch}
\E \Bigl(\int_{\tau_1}^{\tau_2} \int_\R f(s,r,\omega)\wt N(ds,dr) \Bigr)^2\le C C_f^2 \E |\tau_2-\tau_1|.
\end{equation}
\end{Lemma}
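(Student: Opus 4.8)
The plan is to reduce the problem to the Itô isometry for the compensated Poisson integral, after absorbing the random interval of integration $(\tau_1,\tau_2]$ into a predictable integrand.

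First I would pass to a fixed time horizon. Since $\tau_1\le\tau_2\le T$ are stopping times, the stochastic interval $(\tau_1,\tau_2]$ is a predictable set, so
$$g(s,r,\omega):=\I_{(\tau_1,\tau_2]}(s)\,f(s,r,\omega)$$
defines a predictable function on $[0,T]\times\R\times\Omega$ satisfying $|g(s,r,\omega)|\le C_f(|r|^\gamma\wedge1)$, and moreover
$$\int_{\tau_1}^{\tau_2}\int_\R f(s,r,\omega)\,\wt N(ds,dr)=\int_0^T\int_\R g(s,r,\omega)\,\wt N(ds,dr).$$
Next I would check that $g$ is an admissible integrand. Recalling that the Lévy measure of a symmetric $\alpha$-stable process is $\nu(dr)=c_\alpha|r|^{-1-\alpha}\,dr$, the hypothesis $\gamma>\alpha/2$ yields
$$\int_\R(|r|^{2\gamma}\wedge1)\,\nu(dr)=c_\alpha\int_\R(|r|^{2\gamma}\wedge1)\,|r|^{-1-\alpha}\,dr=:C_1(\alpha,\gamma)<\infty,$$
because near the origin the integrand behaves like $|r|^{2\gamma-1-\alpha}$ with $2\gamma-1-\alpha>-1$, while at infinity it behaves like $|r|^{-1-\alpha}$. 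Hence $\E\int_0^T\int_\R g(s,r,\omega)^2\,\nu(dr)\,ds\le C_f^2\,T\,C_1<\infty$, so $g$ lies in the usual $L^2$-space of predictable integrands, the process $t\mapsto\int_0^t\int_\R g\,\wt N(ds,dr)$ is a square-integrable martingale, and the Itô isometry applies:
$$\E\Bigl(\int_0^T\int_\R g(s,r,\omega)\,\wt N(ds,dr)\Bigr)^2=\E\int_0^T\int_\R g(s,r,\omega)^2\,\nu(dr)\,ds.$$

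Finally I would bound the right-hand side pointwise. Using $(|r|^\gamma\wedge1)^2=|r|^{2\gamma}\wedge1$ together with the fact that $g(s,\cdot,\cdot)$ vanishes off $(\tau_1,\tau_2]$,
$$\int_\R g(s,r,\omega)^2\,\nu(dr)\le C_f^2\,\I_{(\tau_1,\tau_2]}(s)\int_\R(|r|^{2\gamma}\wedge1)\,\nu(dr)=C_f^2\,C_1\,\I_{(\tau_1,\tau_2]}(s),$$
so that
$$\E\int_0^T\int_\R g^2\,\nu(dr)\,ds\le C_f^2\,C_1\,\E\int_0^T\I_{(\tau_1,\tau_2]}(s)\,ds=C_f^2\,C_1\,\E|\tau_2-\tau_1|,$$
which gives \eqref{secondmomentstoch} with $C=C_1(\alpha,\gamma)$ (in particular one may take $C$ independent of $T$).

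The argument is essentially routine; the only points deserving care are the reduction to a deterministic time horizon — verifying that $\I_{(\tau_1,\tau_2]}f$ is predictable and square-integrable so that the isometry is legitimately applicable — and the elementary but essential observation that the exponent condition $\gamma>\alpha/2$ is exactly what makes $\int_\R(|r|^{2\gamma}\wedge1)\,\nu(dr)$ converge near the origin for the $\alpha$-stable Lévy measure.
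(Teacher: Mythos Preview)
Your argument is correct and in fact cleaner than the paper's. The paper splits the stochastic integral into a large-jump part ($|r|\ge1$) and a small-jump part ($|r|\le1$): for small jumps it applies the It\^o isometry exactly as you do, but for large jumps it instead bounds $|f|\le C_f$ pointwise and controls the number of jumps of size $\ge1$ on $[\tau_1,\tau_2]$ via the underlying Poisson process, picking up a $T$-dependent constant in the process. Your observation that $(|r|^\gamma\wedge1)^2=|r|^{2\gamma}\wedge1$ is $\nu$-integrable over all of $\R$ (not just near the origin) lets you apply the isometry in one stroke and avoids the split entirely; as you note, this even gives a constant independent of $T$. The only thing the paper's decomposition buys is robustness if one were uncertain about square-integrability of the large-jump integral, but here that is not an issue.
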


The proof of the lemma is given in Appendix~\ref{A:2mom}.

The upcoming subsections are devoted to the proofs of Propositions~\ref{p:vtreal}, \ref{p:wex}, \ref{p:pu}. We complete the proof of Theorem \ref{T:1} in Section~\ref{sec:kur}.

\subsection{Proof of Proposition~\ref{p:vtreal}: any weak solution of (\ref{mainsde}) solves the Zvonkin equation}\label{SS:vtreal}
 We will use different properties of integrals with respect to the Dirichlet processes established in Section~\ref{S:Dirya}. We begin with the following simple moment bound.

 \begin{Lemma} \label{L:Xmoments}
Let $X$ be a weak solution of SDE \eqref{mainsde}. Then for any $\gamma\in[0,\alpha)$ there exists a constant $C>0$ such that for any $s,t\in[0,T]$
\begin{equation}\label{momentX}
\E|X_t-X_s|^\gamma\le C|t-s|^{\gamma/\alpha}.
\end{equation}
\end{Lemma}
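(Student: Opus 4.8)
The plan is to use the decomposition $X_t=x+A_t+L_t$ supplied with any weak solution, so that $X_t-X_s=(A_t-A_s)+(L_t-L_s)$, estimate the two increments separately, and recombine them through the elementary inequality $|a+b|^\gamma\le 2^\gamma(|a|^\gamma+|b|^\gamma)$, which is valid for every $\gamma>0$. The case $\gamma=0$ is trivial, so I would fix $\gamma\in(0,\alpha)$ throughout.

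For the L\'evy part I would invoke stationarity of increments together with the self-similarity of the symmetric $\alpha$-stable process: $L_t-L_s$ has the same law as $(t-s)^{1/\alpha}L_1$. Since $\gamma<\alpha$, the constant $c_\gamma:=\E|L_1|^\gamma$ is finite, and hence $\E|L_t-L_s|^\gamma=c_\gamma\,|t-s|^{\gamma/\alpha}$. For the drift part, the key point is that the H\"older exponent appearing in condition~3 of Definition~\ref{D:sol} exceeds $1/\alpha$. Indeed, $1>1/\alpha$ because $\alpha>1$, and $1+\tfrac\beta\alpha>\tfrac1\alpha$ because $\beta>\tfrac{1-\alpha}{2}>1-\alpha$; therefore $\bigl(1+\tfrac\beta\alpha\bigr)\wedge1>\tfrac1\alpha$, and one may pick $\kappa\in\bigl(\tfrac1\alpha,(1+\tfrac\beta\alpha)\wedge1\bigr)$. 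Applying \eqref{3cond} with this $\kappa$ gives $\E|A_t-A_s|^2\le C|t-s|^{2\kappa}$, and since $\gamma<\alpha<2$ Lyapunov's inequality yields $\E|A_t-A_s|^\gamma\le C^{\gamma/2}|t-s|^{\kappa\gamma}$. Because $\kappa>1/\alpha$ and $|t-s|\le T$, I would then bound $|t-s|^{\kappa\gamma}\le T^{(\kappa-1/\alpha)\gamma}\,|t-s|^{\gamma/\alpha}$, obtaining $\E|A_t-A_s|^\gamma\le C|t-s|^{\gamma/\alpha}$ on $[0,T]$.

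Combining these two estimates with the elementary inequality above gives $\E|X_t-X_s|^\gamma\le C|t-s|^{\gamma/\alpha}$, which is the claim. I do not expect a genuine obstacle here; the only points that require a moment of care are the exponent comparison $\kappa>1/\alpha$ — which is precisely where the standing assumption $\beta>\tfrac{1-\alpha}{2}$ (in fact only $\beta>1-\alpha$ is needed) is used — and the observation that $\gamma<2$ lets one pass from the second-moment bound on $A$ furnished by Definition~\ref{D:sol} to the desired $\gamma$-th moment bound.
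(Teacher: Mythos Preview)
Your proof is correct and follows essentially the same route as the paper: decompose $X_t-X_s$ into $A$- and $L$-increments, use self-similarity of the stable process for the latter, and use condition~3 of Definition~\ref{D:sol} together with Lyapunov's inequality for the former, the key observation being $1/\alpha<(1+\beta/\alpha)\wedge1$. The only cosmetic difference is that the paper takes $\kappa=1/\alpha$ directly (which is permitted since the constraint is $\kappa<(1+\beta/\alpha)\wedge1$), whereas you pick $\kappa>1/\alpha$ and then absorb the extra factor $|t-s|^{(\kappa-1/\alpha)\gamma}$ into the constant.
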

\begin{proof}

First note that by basic properties of an $\alpha$--stable process we have for $\gamma\in[0,\alpha)$
\begin{equation*}
\E|L_t-L_s|^\gamma\le C |t-s|^{\gamma/\alpha},\quad s,t\in[0,T].
\end{equation*}
Note that by Definition~\ref{D:sol} for any $\kappa<1+\beta/\alpha$. So for $s,t\in[0,T]$ using Jensen's inequality we have
\begin{equation}\label{MomentboundX}
\E|X_t-X_s|^\gamma\le C\E|A_t-A_s|^\gamma+C\E|L_t-L_s|^\gamma\le C |t-s|^{\gamma\kappa}+
C |t-s|^{\gamma/\alpha}.
\end{equation}
Since $1/\alpha<1+\beta/\alpha$, we can take in \eqref{MomentboundX} $\kappa=1/\alpha$. This immediately yields \eqref{momentX}.
\end{proof}

\begin{Lemma} \label{L:smooth}
Let $X$ be a weak solution of SDE \eqref{mainsde}. Then for any $\gamma\in[0,\frac1{1/2-\beta/\alpha})$, $\kappa<1+\beta/\alpha$ there exists a constant $C>0$ such that for any $f\in\C_b^\infty$, $0\le s\le t \le T$ we have
\begin{equation}\label{momentforAn}
 \E\Bigl(\int_s^t f(X_l)\,dl\Bigr)^\gamma\le C |t-s|^{\gamma\kappa}\|f\|_\beta^\gamma.
\end{equation}

\end{Lemma}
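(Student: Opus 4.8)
The plan is to reduce \eqref{momentforAn} to a change of variables with the resolvent function and then estimate the four resulting terms, the crucial device being to take the resolvent parameter $\lambda$ equal to a power of $|t-s|$.

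\textbf{Reductions.} Since the map $g\mapsto u^\lambda_{0,g}$ solving \eqref{me} with $f=0$ is linear, \eqref{momentforAn} is homogeneous of degree $\gamma$ in $f$, so one may assume $\|f\|_\beta=1$ (the case $f\equiv0$ being trivial). It suffices to prove the bound for $\gamma\in[1,\tfrac1{1/2-\beta/\alpha})$, an interval contained in $[1,2)$ under the standing assumptions; the case $\gamma<1$ follows from $\gamma=1$ by Jensen's inequality. Finally, splitting $[s,t]$ into at most $\lceil T\rceil$ subintervals of length $\le1$ and using Minkowski's inequality, one reduces to $0<t-s\le1$. I then set $\lambda:=(t-s)^{-1}\ge1$ and $u:=u^\lambda_{0,f}$; by Lemma~\ref{L:1}, $u$ is smooth, $\lambda u-\OL_\alpha u=f$, and $\|u\|_\theta\le C(t-s)^{1-\theta/\alpha+\beta/\alpha}$ for every $\theta\in[0,\alpha+\beta)$, the exponent being positive.

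\textbf{Change of variables.} Because $\beta>\tfrac{1-\alpha}2$, condition~3 of Definition~\ref{D:sol} holds with some $\kappa_0\in(\tfrac12,1+\tfrac\beta\alpha)$, hence $A$ has zero energy and $X=x+A+L$ is covered by the It\^o/change-of-variables formula for Dirichlet-type processes (after the usual localization removing the large jumps of $L$ so that the martingale part is square-integrable). Applying that formula to the bounded smooth $u$ and using $f=\lambda u-\OL_\alpha u$ gives
\begin{equation*}
\int_s^t f(X_l)\,dl=\lambda\int_s^t u(X_l)\,dl-\bigl(u(X_t)-u(X_s)\bigr)+(M_t-M_s)+\int_s^t u'(X_l)\,dA_l,
\end{equation*}
where $M_r:=\int_0^r\int_\R\bigl[u(X_{l-}+\rho)-u(X_{l-})\bigr]\,\wt N(dl,d\rho)$ and $\int_s^t u'(X_l)\,dA_l$ is the forward integral of Definition~\ref{D:Dirint}.

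\textbf{Estimating the four terms.} The aim is to bound the $\SL_\gamma$-norm of each term by $C(t-s)^{\kappa_i}$ with $\kappa_i>\kappa$; since $t-s\le1$ this yields $\|\int_s^t f(X_l)\,dl\|_{\SL_\gamma}\le C(t-s)^\kappa$ and, undoing the reductions, \eqref{momentforAn}. \emph{(i)} The first term is $\le\lambda(t-s)\|u\|_\infty\le C\lambda(t-s)\|u\|_\theta=C(t-s)^{1-\theta/\alpha+\beta/\alpha}$, with exponent $\to1+\beta/\alpha>\kappa$ as $\theta\downarrow0$. \emph{(ii)} Using $|u(y)-u(z)|\le C\|u\|_\theta(|y-z|^\theta\wedge1)$ for $\theta\in(0,1]$ and Lemma~\ref{L:Xmoments} (valid since $\theta\gamma<\alpha$ for $\theta$ small), $\|u(X_t)-u(X_s)\|_{\SL_\gamma}\le C\|u\|_\theta(t-s)^{\theta/\alpha}=C(t-s)^{1+\beta/\alpha}$. \emph{(iii)} Since $\gamma\le2$ and $|u(y+\rho)-u(y)|\le C\|u\|_\theta(|\rho|^\theta\wedge1)$ for $\theta\in(\alpha/2,1]$, Lemma~\ref{L:vtorojmoment} gives $\|M_t-M_s\|_{\SL_\gamma}\le\|M_t-M_s\|_{\SL_2}\le C\|u\|_\theta(t-s)^{1/2}=C(t-s)^{3/2-\theta/\alpha+\beta/\alpha}$, whose exponent tends to $1+\beta/\alpha$ as $\theta\downarrow\alpha/2$ (note $\alpha/2<\min(1,\alpha+\beta)$). \emph{(iv)} For the last term I would apply Lemma~\ref{L:mainyoung}(iii) with $f=u'$ and the present $A$: here $A$ obeys \eqref{condej2} with $p_2=2$ and $\gamma_2$ any number below $1+\beta/\alpha$, while $u'$ obeys \eqref{condej1} with $\gamma_1=(\theta-1)/\alpha$ for suitable $\theta\in(1,\alpha+\beta)$ and $C_f\le C$ (since $\|u'\|_{\theta-1}\le C\|u\|_\theta\le C$); the requirements $\gamma_1+\gamma_2>1$ and $1/p_1+1/p_2\le1$ with $h=\gamma$ (i.e. $p_1=\tfrac{2\gamma}{2-\gamma}$) force $\theta\in(1-\beta,\alpha+\beta)$ and $(\theta-1)\tfrac{2\gamma}{2-\gamma}<\alpha$, the first window being non-empty exactly when $\beta>\tfrac{1-\alpha}2$ and the second being satisfiable exactly when $\gamma<\tfrac1{1/2-\beta/\alpha}$. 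This yields $\|\int_s^t u'(X_l)\,dA_l\|_{\SL_\gamma}\le C(t-s)^{\gamma_2}$ with $\gamma_2>\kappa$.

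\textbf{Main obstacle.} The substantive difficulty is the change-of-variables identity for the non-semimartingale $X$ — existence of the forward integral, vanishing of the bracket contribution of $A$ (which rests on $\kappa_0>\tfrac12$, hence on $\beta>\tfrac{1-\alpha}2$), the compensated-jump term producing $\OL_\alpha u$, and the localization removing the large jumps of $L$. Everything else is careful bookkeeping, and it is precisely in term (iv) that the standing assumption $\beta>\tfrac{1-\alpha}2$ (non-emptiness of $(1-\beta,\alpha+\beta)$) and the hypothesis $\gamma<\tfrac1{1/2-\beta/\alpha}$ (existence of an admissible $\theta$ with $(\theta-1)p_1<\alpha$) are used, and used sharply.
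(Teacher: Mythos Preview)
Your proof is correct and follows essentially the same route as the paper's: apply the It\^o formula for Dirichlet processes to $v^\lambda:=u^\lambda_{0,f}$, set $\lambda=(t-s)^{-1}$, and bound the resulting pieces using Lemma~\ref{L:1}, Lemma~\ref{L:vtorojmoment}, Lemma~\ref{L:Xmoments}, and --- for the $\int u'\,dA$ term, which is where the thresholds $\beta>\tfrac{1-\alpha}{2}$ and $\gamma<\tfrac{1}{1/2-\beta/\alpha}$ enter --- Lemma~\ref{L:mainyoung}(iii) with $p_2=2$. The only cosmetic differences are that the paper bounds $|v^\lambda(X_t)-v^\lambda(X_s)|$ trivially by $2\snorm{v^\lambda}$ (combining your terms (i) and (ii) into a single $I_1$) and parametrizes the H\"older exponent by $\rho=\theta-1$ rather than your $\theta$.
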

\begin{proof}
We begin by observing that $X$ is a Dirichlet process (as $A$ has zero energy due to \eqref{3cond}). Let us fix $f\in\C_b^\infty$ and $0\le s<t\le T$. Take now any $\gamma<1/(1/2-\beta/\alpha)$.
Since we took $\beta<0$, we have $\gamma<2$.

 We claim that it is sufficient to show \eqref{momentforAn} only for those $s$, $t$ that are close enough; further we assume that $|t-s|\le 1$. Indeed, if this is already proven, then for any $s,t\in[0,T]$, $s\le t$ we can take an increasing sequence $(t_i)_{i\in[0,N]}$ such that $t_0=s$, $t_N=t$, $t_{i+1}-t_i\le 1$ and
$N \le T+1$. Then
\begin{align*}
 \E\Bigl(\int_s^t f(X_l)\,dl\Bigr)^\gamma&\le N \sum_{i=0}^{N-1}\E (\int_{t_i}^{t_{i+1}} f(X_l)\,dl\Bigr)^\gamma
 \le
 C N\|f\|_\beta^\gamma \sum_{i=0}^{N-1} |t_{i+1}-t_{i}|^{\gamma\kappa}\\
 &\le C\|f\|_\beta^\gamma N^{1+\gamma\kappa}|t-s|^{\gamma\kappa}\\
 &\le C\|f\|_\beta^\gamma |t-s|^{\gamma\kappa},
\end{align*}
for some $C=C(T)$. Thus, we can safely assume that $|t-s|\le 1$.

Now let us consider a function $v^\lambda:=u^\lambda_{0,f}$, where $\lambda \ge1$; this function is well-defined by Lemma~\ref{L:1}. We apply It\^{o}'s formula
for Dirichlet processes \cite[Theorem~3.4]{CJMS} (see also \cite[Theorem 5.15(ii)]{BR17}) to derive for $0\le s< t\le T$
\begin{align*}
v^\lambda(X_t)-v^\lambda(X_s)=&\int_s^t\OL_{\alpha} v^\lambda(X_l) \,dl
 +\int_s^t\int_\R [ v^\lambda(X_{l-} + r) - v^\lambda(X_{l-})]  \wt N(dl, dr)\\
 &+\int_s^t(v^\lambda)'(X_{l-}) \,dA_l\\
 =&\int_s^t\int_\R [ v^\lambda(X_{l-} + r) - v^\lambda(X_{l-})]  \wt N(ds, dr)\\
 &+\lambda \int_s^t v^\lambda(X_l)\,dl -\int_s^t f(X_l)\,dl\\
 &+\int_s^t (v^\lambda)'(X_l) \,dA_l,
\end{align*}
where we also used the fact that $v^\lambda$ solves \eqref{me} with $0$ in place of $f$ and $f$ in place of $g$. By rearranging the terms we get
\begin{align}\label{itov}
\E\Bigl(\int_s^t f(X_l)\,dl\Bigr)^\gamma\le& C\snorm{v^\lambda}^\gamma(2+\lambda(t-s))^\gamma
+C\E \Bigl(\int_s^t\int_\R [ v^\lambda(X_{l-} + r) - v^\lambda(X_{l-})]  \wt N(ds, dr)\Bigr)^\gamma\nn\\
 &+C\E \Bigl(\int_s^t (v^\lambda)'(X_l) \,dA_l\Bigr)^\gamma\nn\\
 =&I_1+I_2+I_3.
\end{align}

Now
recall that we supposed
that $|t-s|\le 1$. Then we can take $\lambda:=(t-s)^{-1}$. We immediately get by Lemma \ref{L:1} that for any $\eps>0$
\begin{equation}\label{I1forb}
I_1\le C (t-s)^{\gamma+\gamma\beta/\alpha-\eps}\|f\|_\beta^\gamma.
\end{equation}
Note that for any $\rho\in(0,1)$
\begin{equation*}
|v^\lambda(X_{l-} + r) - v^\lambda(X_{l-})|\le 2\|v^\lambda\|_\rho(|r|^\rho\wedge1).
\end{equation*}
Therefore, we take $\rho=\alpha/2+\alpha\eps/\gamma$ and apply Lemma~\ref{L:vtorojmoment} with Jensen's inequality and again Lemma~\ref{L:1}. We get
\begin{equation}\label{I2forb}
I_2\le C (\|v^\lambda\|_\rho)^\gamma |t-s|^{\gamma/2}\le C |t-s|^{\gamma+\gamma\beta/\alpha-\eps}\|f\|_\beta^\gamma.
\end{equation}

Thus it remains to estimate $I_3$. Let $\rho\in(0,\alpha+\beta-1)$ and $\sigma\in(0,\alpha/\rho)$ be parameters to be chosen later. Then Lemma~\ref{L:1} and Lemma~\ref{L:Xmoments} imply for any $t_1,t_2\in[0,T]$
\begin{equation*}
\E| (v^\lambda)'(X_{t_1})- (v^\lambda)'(X_{t_2})|^\sigma\le
(\|(v^\lambda)'\|_{\rho})^\sigma\E|X_{t_1}-X_{t_2}|^{\rho\sigma}\le
C_1\|f\|_\beta^\sigma|t_1-t_2|^{\rho\sigma/\alpha}
\end{equation*}
for some $C_1>0$. Recall that by definition of solution, for any $\eps>0$ there exists $C_2>0$ such that for any $t_1,t_2\in[0,T]$
\begin{equation*}
\E|A_{t_1}-A_{t_2}|^2\le C_2|{t_1}-{t_2}|^{2(1+\beta/\alpha-\eps)}.
\end{equation*}
Thus, we can apply Lemma~\ref{L:mainyoung} with $C_f=C_1^{1/\sigma}\|f\|_\beta$, $C_A=C_2^{1/2}$, $p_1=\sigma$, $p_2=2$, $\gamma_1=\rho/\alpha$, $\gamma_2=1+\beta/\alpha-\eps$. Now we choose any
\begin{equation}\label{choicerhosigma}
\rho\in(-\beta+\eps\alpha,\alpha+\beta-1);\,\,\,\sigma\in(0,\alpha/\rho).
\end{equation}
This is possible for $\eps>0$ small enough since $-\beta<\alpha+\beta-1$ thanks to our assumption on $\beta$. Our choice of parameters \eqref{choicerhosigma} automatically implies that $\gamma_1+\gamma_2>1$. Further we also have that
\begin{equation*}
\frac1h:=\frac1{p_1}+\frac1{p_2}=\frac1{\sigma}+\frac1{2}>1/2-\beta/\alpha+\eps.
\end{equation*}
Recall that $\sigma$ and $\rho$ were arbitrary parameters that satisfy bounds in \eqref{choicerhosigma}. By choosing $\rho$ close enough to its lower bound $-\beta+\eps\alpha$ and choosing $\sigma$ close enough to its upper bound $\alpha/\rho$, one can make $1/h$ to be arbitrarily close (though still bigger) to $1/2-\beta/\alpha+\eps$. Since for small enough $\eps>0$ we have $1/2-\beta/\alpha+\eps<1$, we see that all the conditions of Lemma~\ref{L:mainyoung} are satisfied. Bound \eqref{Lhnormdir} yields for any $h\le (1/2-\beta/\alpha+\eps)^{-1}$
\begin{equation*}
\E\Bigl|\int_s^t v'(X_l) \,dA_l\Bigr|^h\le C \|f\|_\beta^h|t-s|^{h(1+\beta/\alpha-\eps)}.
\end{equation*}
Combining this with \eqref{I1forb} and \eqref{I2forb} and substituting them into \eqref{itov}, we obtain \eqref{momentforAn}. This completes the proof of the lemma.
\end{proof}

\begin{proof}[Proof of Proposition \ref{p:vtreal}]
Let $X$ be a weak solution to \eqref{mainsde}.
Let $(b_n)_{n\in\Z_+}$ be a sequence of $\C_b^\infty$ functions converging to $b$ in $\C^\beta$. Without loss of generality, we can assume that for each $n\in\Z_+$ we have $\|b_n\|_\beta\le2\|b\|_\beta$.

Fix $\lambda\ge\lambda_0(\beta,2\|b\|_\beta)$. Let $u_n = u^\lambda_{b_n,b_n}$ be a unique $\C^{1/2+\alpha/2}$ solution to \eqref{me}. It follows from Proposition \ref{p:resolvent}(i) that $u_n\in\C_b^\infty$. Definition~\ref{D:sol} implies that $A$ has zero energy and thus $X$ is a Dirichlet process. We apply It\^{o}'s formula for Dirichlet processes \cite[Theorem~3.4]{CJMS} (see also \cite[Theorem 5.15(ii)]{BR17}) to derive for $t\ge0$
\begin{align}\label{s1}
u_n(X_t)=&u_n(x)+\int_0^t\OL_{\alpha} u_n(X_s) \,ds
 +\int_0^t\int_\R [ u_n(X_{s-} + r) - u_n(X_{s-})]  \wt N(ds, dr)\nonumber\\
 &+\int_0^tu'_n(X_{s-}) \,dA_s.
\end{align}

We continue \eqref{s1} as follows, using the fact that $u_n$ solves \eqref{me}:
\begin{align}\label{s2}
u_n(X_t)=&u_n(x)+\int_0^t\int_\R [ u_n(X_{s-} + r) - u_n(X_{s-})]  \wt N(ds, dr)\nn\\
 &+\lambda \int_0^t u_n(X_s)\,ds -\int_0^t b_n(X_s)\,ds\nn\\
 &+\int_0^tu'_n(X_s) \,dA_s-\int_0^tu'_n(X_s) b_n(X_s)\,ds.
\end{align}
For any fixed $t\in[0,T]$ let us pass to the limit in \eqref{s2} as $n\to\infty$. Since $u_n$ converges to $u$ in $\C^{1/2+\alpha/2}$, it is clear that
\begin{equation}\label{pervlim}
u_n(X_t)\to u(X_t),\,\, u_n(x)\to u(x),\,\,\lambda \int_0^t u_n(X_s)\,ds\to\lambda \int_0^t u(X_s)\,ds,\,\,\text{a.s. as $n\to\infty$.}
\end{equation}
Note that since $1/2+\alpha/2>1$
\begin{equation*}
|u_n(X_{s-} + r) - u(X_{s-} + r)-u_n(X_{s-})+u(X_{s-})|\le \|u_n-u\|_{1/2+\alpha/2}(|r|\wedge1).
\end{equation*}
Therefore by Lemma~\ref{L:vtorojmoment} we have
\begin{equation}\label{vtomomeint}
\E\Bigl(\int_0^t\int_\R [ u_n(X_{s-} + r) - u(X_{s-} + r)-u_n(X_{s-})+u(X_{s-})]  \wt N(ds, dr)\Bigr)^2\le C
\|u_n-u\|_{1/2+\alpha/2}^2 T.
\end{equation}
By Proposition \ref{p:resolvent}(ii), $\|u_n-u\|_{1/2+\alpha/2}\to0$ as $n\to\infty$. Hence \eqref{vtomomeint} implies
\begin{equation}\label{vtorlim}
\int_0^t\int_\R [ u_n(X_{s-} + r) - u_n(X_{s-})]  \wt N(ds, dr) \to\int_0^t\int_\R [ u(X_{s-} + r) - u(X_{s-})]  \wt N(ds, dr),
\end{equation}
in probability as $n\to\infty$.

By the definition of a solution,
\begin{equation}\label{Alim}
\int_0^t b_n(X_s)\,ds \to A_t,
\end{equation}
in probability as $n\to\infty$.

Thus, it remains to find the limit of the last two terms in the right--hand side of \eqref{s2}. Fix $\eps>0$ small enough. Arguing exactly as in the proof of Lemma~\ref{L:smooth}, we see that for any $t_1, t_2\in[0,T]$
\begin{equation*}
\E| u_n'(X_{t_1})-u_n'(X_{t_2})|^\sigma\le
(\|u_n'\|_{\rho})^\sigma\E|X_{t_1}-X_{t_2}|^{\rho\sigma}\le
C_1\|b\|_\beta^\sigma|t_1-t_2|^{\rho\sigma/\alpha}
\end{equation*}
whenever
\begin{equation}\label{choiserhoepisode2}
\rho\in(-\beta+\eps\alpha,\alpha+\beta-1);\,\,\,\sigma\in(0,\alpha/\rho).
\end{equation}
Furthermore, by the definition of the solution and by Lemma~\ref{L:smooth}, for any $t_1, t_2\in[0,T]$
\begin{equation*}
\E|A_{t_1}-A_{t_2}|^h\le C_2 |t_1-t_2|^{h(1+\beta/\alpha-\eps)};\quad\E\Bigl|\int_{t_1}^{t_2} b_n(X_l)\,dl\Bigr|^h\le C_2 |t_1-t_2|^{h(1+\beta/\alpha-\eps)},
\end{equation*}
whenever $0\le h<1/(1/2-\beta/\alpha)$. Now we can apply Lemma~\ref{L:poslednijboj} to the functions $(u_n')$, $(b_n)$, $A$, with the following parameters:
$C_{f_1}:=C_1^{1/\sigma}\|b\|_\beta$, $p_1:=\sigma$, $\gamma_1:=\rho/\alpha$, $C_A:=C_2^{1/h}$, $p_2=h$, $\gamma_2=1+\beta/\alpha-\eps$. It follows from \eqref{choiserhoepisode2}, that $\gamma_1+\gamma_2>1$. Note that
\begin{equation*}
\frac{1}{p_1}+ \frac{1}{p_2}=\frac{1}{\sigma}+ \frac{1}{h}>\frac{\rho}{\alpha}+1/2-\beta/\alpha>1/2-2\beta/\alpha+\eps.
\end{equation*}
Further, by choosing $h$ close enough to $1/(1/2-\beta/\alpha)$, $\rho$ close enough to $-\beta+\eps\alpha$, $\sigma$ close enough to $\alpha/\rho$, one can guarantee that $\frac{1}{p_1}+ \frac{1}{p_2}$ will be arbitrarily close to $1/2-2\beta/\alpha+\eps$. However for $\eps$ small enough we have $1/2-2\beta/\alpha+\eps<1$. Hence for some suitable choice of parameters, one has $\frac{1}{p_1}+ \frac{1}{p_2}<1$. Hence all the conditions of  Lemma~\ref{L:poslednijboj} are satisfied. Thus,
\begin{equation*}
\int_0^tu'_n(X_s) \,dA_s-\int_0^tu'_n(X_s) b_n(X_s)\,ds\to0,
\end{equation*}
in probability as $n\to\infty$. Combining this with \eqref{pervlim}, \eqref{vtorlim}, \eqref{Alim} we can pass to the limit in Probability in
 \eqref{s2} as $n\to\infty$. We obtain that for each fixed $t$ the following identity holds a.s.
\begin{align*}
u(X_t)=&u(x)+\int_0^t\int_\R [ u(X_{s-} + r) - u(X_{s-})]  \wt N(ds, dr)\nn\\
 &+\lambda \int_0^t u(X_s)\,ds -A_t.
\end{align*}
To complete the proof it remains to note that $A_t=X_t-L_t-x$; thus $X_t$ is indeed a weak solution to equation \eqref{usde}.
\end{proof}

\subsection{Proof of Proposition~\ref{p:wex}: weak existence}
\label{subseq_we}

In this section we establish Proposition~\ref{p:wex}.

As explained in Section~2.1 we will construct a sequence of solutions to the approximated equations with smooth coefficients and then prove that this sequence has a limiting point, which solves SDE \eqref{mainsde} in the weak sense. Thus, let $(b_n)_{n\in\Z_+}$ be a sequence of $\C_b^\infty$ functions converging to $b$ in $\C^\beta$. Suppose that $\|b_n\|_\beta\le 2\|b\|_\beta$. Recall the definitions of $(X^n)_{n\in\Z_+}$ and $(A^n)_{n\in\Z_+}$, which are given in \eqref{appr} and \eqref{Ant}, correspondingly. Recall the definition of the function $\lambda_0$ in Proposition~\ref{p:resolvent}.

For $\lambda\ge\lambda_0(\beta,2\|b\|_\beta)$ let $u_n^\lambda:=u^\lambda_{b_n,b_n}$  be the unique
solution of the resolvent equation \eqref{me} in class $\C^{\frac{1+\alpha}{2}}$. By Proposition
\ref{p:resolvent}(i) $u_n^\lambda$ is well-defined and $u^\lambda_n\in\C^\infty_b$. For brevity, in this subsection further we will write just
$\lambda_0$ instead of $\lambda_0(\beta,2\|b\|_\beta)$.

\begin{Lemma}\label{L:apseq}
\begin{itemize}
 \item[{\rm(i)}] For each $\lambda\ge\lambda_0$, $n\in\Z_+$, $t\in[0,T]$ we have
\begin{equation}\label{itoapr}
u_n^\lambda(X_t^n)=u_n^\lambda(x)+\int_0^t \int_\R (u_n^\lambda(X^n_{s-}+r)-u_n^\lambda(X^n_{s-}))\wt N(ds,dr)+\lambda \int_0^t u_n^\lambda(X^n_s)\,ds-A^n_t.
\end{equation}
\item[{\rm(ii)}] Further, for any $\eps>0$ there exists a constant $C>0$ such that for any $n\in\Z_+$, $\delta\le 1/\lambda_0$, and stopping time $\tau\in[0,T]$ we have
\begin{equation}\label{momentinequalityA}
\E |A^n_{\tau+\delta}-A^n_{\tau}|^2\le C\delta^{2(1+\frac\beta\alpha-\eps)} \|b\|_\beta^2(\|b\|_\beta+1)^2.
\end{equation}
\item[{\rm(iii)}] There exists a constant $C>0$ such that for any $n\in\Z_+$, $t\in[0,T]$ we have
\begin{equation}\label{momentinequalityonepointA}
\E |A^n_{t}|^2\le C \|b\|_\beta^2(\|b\|_\beta+1)^2.
\end{equation}
\item[{\rm(iv)}] Finally, the sequence $\{(X^n,A^n)\}_{n\in\Z_+}$ is tight in $\D_{\R^2}[0,T]$.
\end{itemize}
\end{Lemma}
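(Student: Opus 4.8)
The plan is to prove the four parts in order; part~(ii) carries all the real work, while (i), (iii), (iv) are bookkeeping built on it.

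\textbf{Part (i).} Fix $\lambda\ge\lambda_0$ and $n\in\Z_+$. By Proposition~\ref{p:resolvent}(i) the function $u_n^\lambda:=u^\lambda_{b_n,b_n}$ lies in $\C^{(1+\alpha)/2}$, hence has a bounded continuous derivative, and $X^n$ is a semimartingale. I would apply the It\^o formula for Dirichlet processes exactly as in \eqref{s1}; since the drift part $A^n_t=\int_0^tb_n(X^n_s)\,ds$ of $X^n$ has finite variation, the forward integral $\int_0^t(u_n^\lambda)'(X^n_{s-})\,dA^n_s$ reduces to $\int_0^t(u_n^\lambda)'(X^n_s)b_n(X^n_s)\,ds$, so one obtains $u_n^\lambda(X^n_t)=u_n^\lambda(x)+\int_0^t[\OL_\alpha u_n^\lambda+b_n(u_n^\lambda)'](X^n_s)\,ds+M^n_t$, where $M^n$ is the compensated Poisson integral appearing in \eqref{itoapr}. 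Substituting the resolvent equation \eqref{me} in the form $\OL_\alpha u_n^\lambda+b_n(u_n^\lambda)'=\lambda u_n^\lambda-b_n$ turns the drift integral into $\lambda\int_0^tu_n^\lambda(X^n_s)\,ds-A^n_t$, which is \eqref{itoapr}.

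\textbf{Part (ii).} Rearranging \eqref{itoapr} expresses $A^n_{\tau+\delta}-A^n_\tau$ as the sum of $-(u_n^\lambda(X^n_{\tau+\delta})-u_n^\lambda(X^n_\tau))$, of $\lambda\int_\tau^{\tau+\delta}u_n^\lambda(X^n_s)\,ds$, and of the martingale increment $M^n_{\tau+\delta}-M^n_\tau$. The key device (as in Lemma~\ref{L:smooth}) is to \emph{choose} $\lambda:=1/\delta$, which is admissible, i.e.\ $\ge\lambda_0$, precisely because $\delta\le1/\lambda_0$. By \eqref{est} and $\|b_n\|_\beta\le2\|b\|_\beta$ one has, for every $\gamma\in[0,\alpha+\beta)$,
\begin{equation*}
\|u_n^\lambda\|_\gamma\le C\lambda^{-1-\frac\beta\alpha+\frac\gamma\alpha}\|b\|_\beta(1+\|b\|_\beta)=C\delta^{1+\frac\beta\alpha-\frac\gamma\alpha}\|b\|_\beta(1+\|b\|_\beta).
\end{equation*}
With $\gamma=0$ this bounds the first term by $2\|u_n^\lambda\|_0\le C\delta^{1+\beta/\alpha}\|b\|_\beta(1+\|b\|_\beta)$, and since $\lambda\delta=1$ the second term is at most $\lambda\delta\,\|u_n^\lambda\|_0=\|u_n^\lambda\|_0$, the same bound. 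For the martingale increment I would fix a small $\eps>0$ and set $\rho:=\alpha/2+\alpha\eps$; then $\rho\in(\alpha/2,1)$ and $\rho<\alpha+\beta$ for $\eps$ small (using $\beta>\tfrac12-\tfrac\alpha2$), and $|u_n^\lambda(y+r)-u_n^\lambda(y)|\le2\|u_n^\lambda\|_\rho(|r|^\rho\wedge1)$, so Lemma~\ref{L:vtorojmoment} with $\gamma=\rho$ and $C_f=2\|u_n^\lambda\|_\rho$ gives $\E(M^n_{\tau+\delta}-M^n_\tau)^2\le C\|u_n^\lambda\|_\rho^2\,\delta\le C\delta^{2(1+\beta/\alpha-\eps)}\|b\|_\beta^2(1+\|b\|_\beta)^2$, the last bound using the estimate above and the identity $2(1+\beta/\alpha-\rho/\alpha)+1=2(1+\beta/\alpha-\eps)$. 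Since $\delta\le1/\lambda_0\le1$, all three contributions are $\le C\delta^{2(1+\beta/\alpha-\eps)}\|b\|_\beta^2(1+\|b\|_\beta)^2$, which is \eqref{momentinequalityA}.

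\textbf{Parts (iii) and (iv).} For (iii) I would split $A^n_t=\sum_{i=0}^{N-1}(A^n_{(i+1)t/N}-A^n_{it/N})$ with $N:=\lceil t\lambda_0\rceil\le T\lambda_0+1$, so each increment spans a time $\delta=t/N\le1/\lambda_0$; Minkowski's inequality and (ii) give $\|A^n_t\|_{\SL_2}\le N\cdot C\delta^{1+\beta/\alpha-\eps}\|b\|_\beta(1+\|b\|_\beta)$, and since $1+\beta/\alpha-\eps\in(0,1)$ the factor $N^{1-(1+\beta/\alpha-\eps)}$ is at most $T\lambda_0+1$, which yields \eqref{momentinequalityonepointA} after squaring. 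For (iv) I would fix $\eps$ so small that $\kappa:=1+\beta/\alpha-\eps>1/2$, which is possible because $\beta>\tfrac12-\tfrac\alpha2$ forces $1+\beta/\alpha>\tfrac12$; then \eqref{momentinequalityA} is exactly Aldous's condition for the continuous processes $A^n$, and together with the uniform bound (iii) and $A^n_0=0$ it gives $C$-tightness of $\{A^n\}_{n\in\Z_+}$ in $\D_\R[0,T]$ (see \cite[Chapter~3]{EK86}). Since $X^n=x+A^n+L$ with the \emph{same} L\'evy process $L$, the pair $\{(A^n,L)\}_{n\in\Z_+}$ is tight in $\D_{\R^2}[0,T]$ ($C$-tightness of one coordinate plus tightness of the other), and the map $(a,\ell)\mapsto(x+a+\ell,a)$ is continuous at every point whose first coordinate is continuous; as all subsequential weak limits of $(A^n,L)$ have continuous first coordinate, the continuous mapping theorem yields tightness of $\{(X^n,A^n)\}_{n\in\Z_+}$.

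\textbf{Main obstacle.} The delicate point is (ii): the three powers of $\delta$ match only with the precise choice $\lambda=1/\delta$ and $\rho$ barely above $\alpha/2$, and it is exactly the hypothesis $\beta>\tfrac12-\tfrac\alpha2$ that simultaneously pushes the resulting exponent $1+\beta/\alpha-\eps$ above $1/2$ (needed for the tightness in (iv) and to match condition~\eqref{3cond}) and keeps $\rho<\alpha+\beta$, so that $u_n^\lambda\in\C^\rho$ and \eqref{est} applies. Everything else rests on this estimate.
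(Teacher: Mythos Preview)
Your proof is correct and follows essentially the same route as the paper's: the same It\^o/resolvent identity for (i), the same choice $\lambda=1/\delta$ together with $\rho=\alpha/2+\alpha\eps$ in Lemma~\ref{L:vtorojmoment} for (ii), a splitting into short subintervals for (iii), and Aldous's criterion plus $C$-tightness of $A^n$ for (iv) (the paper applies Aldous directly to $X^n$ rather than your continuous-mapping argument, but both work). One small slip in (ii): you bound $|u_n^\lambda(X^n_{\tau+\delta})-u_n^\lambda(X^n_\tau)|$ by $2\|u_n^\lambda\|_0$, but the Besov $\C^0$ norm does not control the sup norm (cf.\ Lemma~\ref{L:31}(ii), which requires $\gamma>0$); the fix---and what the paper does---is to apply \eqref{est} with a small positive $\gamma$, which is precisely why the bound \eqref{Amoment1} carries the extra $-\eps$ in the exponent rather than the clean $1+\beta/\alpha$ you wrote.
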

\begin{proof}
(i). Since $u_n^\lambda\in\C^\infty_b$, identity \eqref{itoapr} follows immediately by an application of It\^{o}'s formula (see, e.g., \cite[Theorem 4.4.7]{Apple}) to the process $X^n$ and the function $u_n^\lambda$.

(ii). Fix $\delta\le 1/\lambda_0$, stopping time $\tau$, and $\eps>0$ small enough. For $\lambda\ge\lambda_0$, $n\in\Z_+$ we denote
\begin{equation*}
I^{n,\lambda}_t:=\int_0^t \int_\R (u_n^\lambda(X^n_{s-}+r)-u_n^\lambda(X^n_{s-}))\wt N(ds,dr),\quad t\in[0,T].
\end{equation*}

It follows from \eqref{itoapr} and Proposition \ref{p:resolvent}(i) that for any $\lambda\ge\lambda_0$ we have
\begin{align}\label{pervijshag_otzenka}
|A^n_{\tau+\delta}-A^n_{\tau}|&\le |u_n^\lambda(X_{\tau+\delta}^n)-u_n^\lambda(X_{\tau}^n)|+|I^{n,\lambda}_{\tau+\delta}-I^{n,\lambda}_{\tau}|
+\lambda \int_{\tau}^{\tau+\delta} |u_n^\lambda(X^n_s)|\,ds\nn\\
&\le \snorm{u_n^\lambda}(2+\lambda\delta)+ |I^{n,\lambda}_{\tau+\delta}-I^{n,\lambda}_{\tau}|.
\end{align}
Now let us pick $\lambda:=\delta^{-1}$. Since $\delta\le 1/\lambda_0$, we clearly have $\lambda\ge \lambda_0$. Then it follows from Proposition \ref{p:resolvent}(i) and the bound $\|b_n\|_\beta\le 2 \|b\|_\beta$ that
\begin{equation}\label{Amoment1}
\snorm{u_n^\lambda}(2+\lambda\delta)\le C\delta^{1+\frac\beta\alpha-\eps} \|b\|_\beta(\|b\|_\beta+1),
\end{equation}
where the constant $C>0$ depends only on $\alpha$, $\beta$ and $\eps$ (but not $n$, $\delta$, $\lambda$, or $\tau$).

Note that for any $\gamma\in(0,1)$, we have
\begin{equation*}
|u_n^\lambda(X^n_{s-}+r)-u_n^\lambda(X^n_{s-})|\le \|u_n^\lambda\|_\gamma(1\wedge |r|^\gamma),\quad s\in[0,T],\,r\in\R.
\end{equation*}
Thus, we can take $\gamma:=\alpha/2+\alpha\eps$ and deduce from Lemma~\ref{L:vtorojmoment} and Proposition \ref{p:resolvent}(i) that
\begin{equation*}
\E (I^{n,\lambda}_{\tau+\delta}-I^{n,\lambda}_{\tau})^2\le C \delta (\|u_n^\lambda\|_{\alpha/2+\alpha\eps})^2\le
C\delta^{2(1+\frac\beta\alpha-\eps)} \|b\|_\beta^2(\|b\|_\beta+1)^2,
\end{equation*}
where again the constant $C$ does not depend on $n$, $\delta$, $\lambda$, or $\tau$. Combining this bound with \eqref{pervijshag_otzenka} and \eqref{Amoment1} we establish \eqref{momentinequalityA}.

(iii). It is clear that for any $t\in[0,T]$ there exists $N\in\Z_+$ and a increasing sequence $(t_i)_{i\in[0,N]}$ such that $t_0=0$, $t_N=t$ and $t_{i+1}-t_i\le 1/\lambda_0$. Further, one can take $N=\lceil T\lambda_0 \rceil$. Then it follows from part (ii) of the lemma and the fact that $A_0=0$ that
\begin{equation*}
\E |A^n_{t}|^2\le N \sum_{i=0}^{N-1}\E|A^n_{t_{i+1}}-A^n_{t_{i}}|^2\le C (T,\lambda_0,\alpha,\beta) \|b\|_\beta^2(\|b\|_\beta+1)^2,
\end{equation*}
which proves \eqref{momentinequalityonepointA}.

(iv). To establish the tightness of $\{(X^n,A^n)\}_{n\in\Z_+}$, first let us verify that the sequence $(A^n)_{n\in\Z_+}$ is tight in $\D_\R[0,T]$.
We would like to apply the Aldous theorem \cite[Theorem~1]{Ald}. Thus, we need to check that for each $t\in[0,T]$ the sequence of random variables $(A^n_t)_{n\in\Z_+}$ is tight; and that for any sequence of stopping times $(\tau_n)_{n \in \Z_+}$ and constants
$\delta_n\to0$ we have
\begin{equation}\label{eq:t3}
A^n_{\tau_n+\delta_n} - A^n_{\tau_n}\to0,\,\text{ as $n\to\infty$ in probability}.
\end{equation}

The first condition of Aldous' theorem holds thanks to part (iii) of the lemma. Indeed, bound
\eqref{momentinequalityonepointA} yields that for each fixed $t\in[0,T]$ the sequence $(A^n_t)_{n\in\Z_+}$ is tight.

To verify the second condition of Aldous' theorem we take a sequence of stopping times $(\tau_n)_{n \in\Z_+}$ and a sequence of constants $\delta_n\to0$. We can assume without loss of generality that $\delta_n\le 1/\lambda_0$ for all $n\in\Z_+$. Then we apply part (ii) of the lemma with $\tau=\tau_n$, $\delta=\delta_n$.  We derive
\begin{equation}\label{prov2}
\E |A^n_{\tau_n+\delta_n}-A^n_{\tau_n}|\le C \|b\|_\beta (\|b\|_\beta+1) \sqrt \delta_n\to 0,\,\text{ as $n\to\infty$},
\end{equation}
where we used the fact that $1+\beta/\alpha>1/2$. This implies \eqref{eq:t3}. Thus, all the conditions of \cite[Theorem~1]{Ald} are satisfied and the sequence $(A^n)_{n\in\Z_+}$ is tight.

Recall that $X^n_t=A^n_t+x+L_t$. It follows from \eqref{momentinequalityonepointA} and \eqref{prov2} that the sequence $(X^n)_{n\in\Z_+}$ also satisfies the conditions of \cite[Theorem~1]{Ald}. Hence, $(X^n)_{n\in\Z_+}$ is tight.

To complete the proof it remains to note that $A^n$ is continuous in $t$ for each $n$. Thus, $(A^n)_{n\in\Z_+}$ is actually $C$-tight and therefore by \cite[Corollary~VI.3.33(b)]{JS87} the sequence $\{(X^n,A^n)\}_{n\in\Z_+}$ is tight in $\D_{\R^2}[0,T]$.
\end{proof}

Now we are ready to prove the main result of this subsection.

\begin{proof}[Proof of Proposition \ref{p:wex}(i)]
Fix $\lambda\ge\lambda_0(\beta,2\|b\|_\beta)$. In the proof for brevity we will write $u:=u^\lambda_{b,b}$ and $u_n:=u^\lambda_{b_n,b_n}$.

 We use the approximating sequence $\{(X^n,A^n)\}_{n\in\Z_+}$ constructed in Lemma~\ref{L:apseq}. It follows from the lemma that this sequence is tight in $\D_{\R^2}[0,T]$. Hence by the Prokhorov
theorem there exists a subsequence $(n_k)$ such that $(X^{n_k},A^{n_k})$ converges weakly in the Skorokhod space $\D_{R^2}[0,T]$ to the limit $(X,A)$. In order not to overburden the notation,
we suppose that we have already passed to this subsequence and thus we assume that $(X^n,A^n)$ converges weakly to $(X,A)$. Then by the Skorokhod representation theorem (see, e.g., \cite[Theorem~3.1.8]{EK86}) there exists a sequence of random elements $(\wh X^n,\wh A^n)$
defined on a common probability space
$(\wh{\Omega},\wh{\F},\wh{\P})$ such that $(\wh X^n,\wh A^n)\to (\wh{X},\wh{A})$ a.s. in the Skorokhod metric and $\Law(\wh X^n,\wh A^n)=\Law({X^n},{A^n})$, $\Law(\wh{X},\wh{A})=\Law({X},{A})$.

Denote $\wh L^n:=\wh X^n-\wh A^n-x$, and define similarly $\wh L$. By above, $\Law(\wh L^n)=\Law(X^n-A^n-x)=\Law(L)$. Thus, $\wh L^n$ is an $\alpha$-stable L\'{e}vy process.
It follows from Lemma~\ref{P:PK}, that $(\wh X^n,\wh A^n,\wh L^n)$ converges a.s. in the Skorokhod metric to $(\wh{X},\wh{A},\wh{L})$. Hence $\wh{L}$ is also an $\alpha$-stable L\'{e}vy process. Denote by $\wt{\wh N^n}$ (respectively $\wt{\wh N}$) the compensated Poisson random measure of $\wh L^n$ (respectively $\wh L$).

It follows from the above considerations and \eqref{itoapr} that for $t\in[0,T]$
\begin{equation}\label{itoaprnew}
u_n(\wh X_t^n)-u_n(x)-\lambda \int_0^t u_n(\wh X^n_s)\,ds+\wh A^n_t=\int_0^t \int_\R (u_n(\wh X^n_{s-}+r)-u_n(\wh X^n_{s-}))\wt{\wh N^n}(ds,dr).
\end{equation}

Let us pass to the limit as $n\to\infty$ in \eqref{itoaprnew}.

First we recall that $\wh X^n$ converges a.s. to $\wh X$ in the Skorokhod metric as $n\to\infty$. By Proposition \ref{p:resolvent}(i)
we have $\sup_n\|u_n\|_{(1+\alpha)/2}<\infty$ and by Proposition \ref{p:resolvent}(ii) $\lim_{n\to\infty }\|u_n-u\|_{(1+\alpha)/2}\to 0$. Therefore, by Lemma~\ref{P:PK},
\begin{equation}\label{perv}
u_n(\wh X_\cdot^n)\to u(\wh X_\cdot),\,\,\text{as $n\to\infty$ a.s. in $\D_\R[0,T]$.}
\end{equation}
By Lemma~\ref{L:425},
\begin{equation}\label{vtor}
\lambda \int_0^\cdot u_n(\wh X^n_s)\,ds\to\int_0^\cdot u(\wh X_s)\,ds\,\,\text{as $n\to\infty$ a.s. in $\D_\R[0,T]$.}
\end{equation}
Note that the function $\int_0^\cdot u(\wh X_s)\,ds$ is continuous; recall that $\wh A^n$ converges a.s. to a continuous function $\wh A$. Therefore \eqref{perv}, \eqref{vtor} and \cite[Proposition~VI.1.23]{JS87} yield that the left--hand side of \eqref{itoaprnew} converges a.s. in $\D[0,T]$ to
\begin{equation*}
u(\wh X_\cdot)-u(x)-\lambda \int_0^\cdot u(\wh X_s)\,ds+\wh A.
\end{equation*}

Recall that $(\wh X^n,\wh L^n)$ converges a.s. in $\D_{\R^2}[0,T]$ to $(\wh{X},\wh{L})$ and $\sup_n\|u_n\|_{(1+\gamma)/2}<\infty$. Thus, all the conditions of Lemma~\ref{L:42} are satisfied. Hence the right--hand side of \eqref{itoaprnew} converges in probability in $\D[0,T]$ to
\begin{equation*}
\int_0^\cdot \int_\R (u(\wh X_{s-}+r)-u(\wh X_{s-}))\wt{\wh N}(ds,dr).
\end{equation*}
Thus,
\begin{equation}\label{vazhno}
u(\wh X_t)-u(x)-\lambda \int_0^t u(\wh X_s)\,ds+\wh A_t=\int_0^t \int_\R (u(\wh X_{s-}+r)-u(\wh X_{s-}))\wt{\wh N}(ds,dr),\quad t\in[0,T].
\end{equation}
Since $\wh A_t=\wh X_t-\wh L_t-x$, we see that $(\wh X, \wh L)$ is indeed a weak solution to \eqref{usde}.
\end{proof}
\begin{proof}[Proof of Proposition \ref{p:wex}(ii)] Recall that by definition
\begin{equation*}
\wh X_t=x+\wh A_t +\wh L_t,\quad t\in[0,T].
\end{equation*}
Thus it remains to check that the process $\wh A_t$ satisfies the second and third properties in the Definition~\ref{D:sol}.

To check the second property take any approximating sequence $(b_n)_{n\in\Z_+}\in\C^\infty_b$ such
that $b_n\to b$ in $\C^\beta$ as $n\to\infty$ and $\|b_n\|_\beta\le 2 \|b\|_\beta$, $n\in\Z_+$. Take any $\lambda\ge\lambda_0(\beta,2\|b\|_\beta)$. For any $n,m\in\Z_+$ we consider $u^{n,m}:=u^\lambda_{b_n,b_m}$, which is the unique $\C^{(1+\alpha/2)}$ solution to the equation
\eqref{me} with $b_n$ in place of $f$ and $b_m$ in place of $g$.
We apply It\^{o}'s formula to the process $\wh X^n$. We get
\begin{multline}\label{itomn}
u^{n,m}(\wh X_t^n)-u^{n,m}(x)-\lambda \int_0^t u^{n,m}(\wh X^n_s)\,ds+\int_0^t b_m(\wh X^n_s)\,ds\\
=\int_0^t \int_\R (u^{n,m}(\wh X^n_{s-}+r)-u^{n,m}(\wh X^n_{s-}))\wt{\wh N^n}(ds,dr).
\end{multline}

Consider now $u^{(m)}:=u^\lambda_{b,b_m}$. Then by Proposition \ref{p:resolvent}(ii)
\begin{equation*}
\lim_{n\to\infty}\|u^{n,m}-u^{(m)}\|_{(1+\alpha)/2}\to0.
\end{equation*}

Now for each fixed $m\in\Z_+$ we pass to the limit as $n\to\infty$ in \eqref{itomn}. Arguing exactly as in in part (i) of the proof, we apply Lemmas~\ref{P:PK}, \ref{L:425}, \ref{L:42} and \cite[Proposition~VI.1.23]{JS87}, to obtain
\begin{multline*}
u^{(m)}(\wh X_t)-u^{(m)}(x)-\lambda \int_0^t u^{(m)}(\wh X_s)\,ds+\int_0^t b_m(\wh X_s)\,ds\\
=\int_0^t \int_\R (u^{(m)}(\wh X_{s-}+r)-u^{(m)}(\wh X_{s-}))\wt{\wh N}(ds,dr).
\end{multline*}

Comparing this identity with \eqref{vazhno}, we deduce
\begin{equation}\label{jim}
\Bigl\|\int_0^\cdot b_m(\wh X_s)\,ds-\wh A\Bigr\|\le \snorm{u^{(m)}-u}\bigl(2+(C+\lambda) T+2\sum_{s\le T}\I(|\Delta L_s|>1)\bigr)
+\snorm{J^m},
\end{equation}
where we denoted
\begin{equation*}
J^m(t):=\int_0^t \int_{|r|<1} \bigl(u^{(m)}(\wh X_{s-}+r)-u^{(m)}(\wh X_{s-})
-u(\wh X_{s-}+r)-u(\wh X_{s-})\bigr)\wt{\wh N}(ds,dr),\quad t\in[0,T].
\end{equation*}

Clearly, for any $x\in\R$, $y\ge0$
\begin{equation*}
|u^{(m)}(x+y)-u^{(m)}(x)-u(x+y)-u(x)|\le \int_{x}^{x+y} |(u^{(m)})'(s)-u'(s)|\, ds\le y\|u^{(m)}-u\|_{(1+\alpha)/2}.
\end{equation*}

Taking into account this inequality and the fact that $J^m$ is a martingale, we apply Doob's inequality to derive for any $\eps>0$
\begin{equation}\label{jimdva}
\P(\snorm{J^m}>\eps)\le \eps^{-2}\E J^m(T)^2\le C T\eps^{-2}\|u^{(m)}-u\|^2_{(1+\alpha)/2}.
\end{equation}
Recall that $\|u^{(m)}-u\|\to 0$ as $m\to\infty$. Hence, combining \eqref{jim} and \eqref{jimdva}, we get
\begin{equation*}
\Bigl\|\int_0^\cdot b_m(\wh X_s)\,ds-\wh A\Bigr\|\to 0,\text{ in probability as $m\to\infty$}.
\end{equation*}

It now remains to show that $\wh{A}$ satisfies the third condition. Fix any $0\le s \le t \le T$. By the standard argument, we see that it is enough to check \eqref{3cond} only
for $s$, $t$ close enough. Thus, we can assume that $|t-s|\le \frac1{\lambda_0(\beta,2\|b\|_\beta)}$. It follows from Lemma~\ref{L:apseq}(ii) that
\begin{equation*}
\E |\wh A^n_{t}-\wh A^n_{s}|^2=\E |A^n_{t}- A^n_{s}|^2\le C|t-s|^{2(1+\frac\beta\alpha-\eps)} \|b\|_\beta^2(\|b\|_\beta+1)^2.
\end{equation*}
By Fatou's lemma,
\begin{equation*}
\E |\wh A_{t}-\wh A_{s}|^2\le C|t-s|^{2(1+\frac\beta\alpha-\eps)} \|b\|_\beta^2(\|b\|_\beta+1)^2.
\end{equation*}
This implies that $\wh{A}$ satisfies the third condition. Hence $\wh X$ is a weak solution to \eqref{mainsde}. This concludes the proof.
\end{proof}

\subsection{Proof of Proposition \ref{p:pu}: pathwise uniqueness}\label{SS:PU}
The primary purpose of this section is to prove Proposition \ref{p:pu}. We need the following auxiliary lemma.

\begin{Lemma}\label{L:45}
Let $\gamma_1, \gamma_2\in[0,1)$. Denote $\gamma:=\gamma_1+\gamma_2$ and assume that $\gamma\neq1$. Then for any
$f\in\C^{\gamma}$, and $r, x_1,x_2\in \R$  we have
\begin{equation}\label{est2holder}
|f(x_1+r)-f(x_1)-f(x_2+r)+f(x_2)|\le 2 \|f\|_{\gamma}|x_1-x_2|^{\gamma_1} |r|^{\gamma_2}.
\end{equation}
\end{Lemma}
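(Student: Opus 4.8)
To prove \eqref{est2holder} I would split the argument according to whether $\gamma=\gamma_1+\gamma_2$ lies in $(0,1)$ or in $(1,2)$; the value $\gamma=1$ is excluded precisely so that this dichotomy is clean, and $\gamma=0$ is trivial. In each regime I would extract two crude estimates for $Q:=|f(x_1+r)-f(x_1)-f(x_2+r)+f(x_2)|$ — one carrying a power of $|x_1-x_2|$ only and one carrying a power of $|r|$ only — and then interpolate between them using the elementary inequality
\[ \min(a,b)^{\gamma_1+\gamma_2}\le a^{\gamma_1}b^{\gamma_2},\qquad a,b\ge0,\ \gamma_1,\gamma_2\ge0, \]
which follows because, if $a\le b$, the left-hand side equals $a^{\gamma_1}a^{\gamma_2}\le a^{\gamma_1}b^{\gamma_2}$, and symmetrically otherwise; this will be applied with $a=|x_1-x_2|$ and $b=|r|$.

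For $\gamma\in(0,1)$, the space $\C^\gamma$ is the classical H\"older space and, by the equivalent norm of Lemma~\ref{Besov}, its H\"older seminorm is dominated by $\|\cdot\|_\gamma$. Grouping the four terms of $Q$ in the two natural ways gives $Q\le|f(x_1+r)-f(x_2+r)|+|f(x_1)-f(x_2)|\le 2\|f\|_\gamma|x_1-x_2|^\gamma$ and $Q\le|f(x_1+r)-f(x_1)|+|f(x_2+r)-f(x_2)|\le 2\|f\|_\gamma|r|^\gamma$, hence $Q\le 2\|f\|_\gamma\min(|x_1-x_2|,|r|)^\gamma$, and the interpolation inequality yields \eqref{est2holder}.

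For $\gamma\in(1,2)$ the exponents $\gamma_1,\gamma_2$ both lie in $(\gamma-1,1)$ and are therefore positive, $f\in C^1$, and $f'\in\C^{\gamma-1}$ with $\|f'\|_{\gamma-1}\le C\|f\|_\gamma$ by Lemma~\ref{L:31}(iii); in particular $f'$ is $(\gamma-1)$-H\"older. Applying the fundamental theorem of calculus in each variable separately I would write
\[ f(x_1+r)-f(x_1)-f(x_2+r)+f(x_2)=\int_0^r\bigl(f'(x_1+t)-f'(x_2+t)\bigr)\,dt=\int_{x_2}^{x_1}\bigl(f'(s+r)-f'(s)\bigr)\,ds, \]
which gives $Q\le C\|f\|_\gamma\,|r|\,|x_1-x_2|^{\gamma-1}$ and $Q\le C\|f\|_\gamma\,|x_1-x_2|\,|r|^{\gamma-1}$. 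Since $|r|\,|x_1-x_2|^{\gamma-1}=|x_1-x_2|^{\gamma_1}|r|^{\gamma_2}\bigl(|r|/|x_1-x_2|\bigr)^{1-\gamma_2}$ with $1-\gamma_2>0$, the first bound settles the case $|r|\le|x_1-x_2|$, and the second, treated symmetrically using $1-\gamma_1>0$, settles $|r|\ge|x_1-x_2|$. I do not expect a genuine obstacle here: the only point needing care is the bookkeeping of constants, since the Littlewood--Paley norm $\|\cdot\|_\gamma$ only agrees with the relevant classical (semi)norm up to a universal factor. One therefore first proves the estimate with the H\"older seminorm of $f$ (resp.\ of $f'$) on the right and then passes to $\|f\|_\gamma$ via Lemma~\ref{Besov} and Lemma~\ref{L:31}, the stated constant $2$ being a comfortable non-optimal bound.
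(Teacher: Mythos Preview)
Your proof is correct and follows essentially the same route as the paper: obtain two ``pure'' bounds (in $|x_1-x_2|$ alone and in $|r|$ alone when $\gamma<1$; via the fundamental theorem of calculus when $\gamma>1$) and interpolate. The only cosmetic difference is that in the regime $\gamma\in(1,2)$ the paper interpolates by raising the two inequalities to the complementary powers $(1-\gamma_2)/(2-\gamma)$ and $(1-\gamma_1)/(2-\gamma)$ and multiplying, whereas you split into the cases $|r|\lessgtr|x_1-x_2|$; these are equivalent, and your remark about the constant is apt.
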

The proof of the lemma is standard. For the sake of completeness we provide it in the appendix.

\begin{proof}[Proof of Proposition \ref{p:pu}]
First of all we note that thanks to Proposition \ref{p:resolvent}(i), there exists $\lambda_1=\lambda_1(\beta, \|b\|_\beta)\ge\lambda_0(\beta, \|b\|_\beta)$ such that for any $\lambda>\lambda_1$ we have
\begin{equation}\label{smallderivative}
\snorm{(u^\lambda_{b,b})'}+\snorm{u^\lambda_{b,b}}\le1/4.
\end{equation}
From now on we fix $\lambda>\lambda_1$ and for brevity write $u$ instead of $u^\lambda_{b,b}$.

It is straightforward to see that \eqref{smallderivative} yields for any $x_1,x_2\in\R$
\begin{equation}\label{vmseto_byvshih_lemm}
|u(x_1)+x_1-u(x_2)-x_2|\ge |x_1-x_2|-|u(x_1)-u(x_2)|\ge|x_1-x_2| (1-\snorm{u'})\ge\frac34|x_1-x_2|.
\end{equation}
Similarly, for any $x_1,x_2\in\R$
\begin{equation*}
|u(x_1)+x_1-u(x_2)-x_2|\le \frac54|x_1-x_2|.
\end{equation*}

Denote
\begin{equation*}
h(x_1,x_2,r):= u(x_1+r)-u(x_1)-u(x_2+r)+u(x_2),\quad x_1,x_2,r\in\R,
\end{equation*}
We have for $x_1,x_2,r\in\R$
\begin{eqnarray}\label{hest}
|h(x_1,x_2,r)|\le 2\snorm{u'}(|x_1-x_2|\wedge|r|)
&\le&\frac12(|x_1-x_2|\wedge|r|)\\
\label{hest22}
&\leq&\frac{2}{3} |u(x_1)+x_1-u(x_2)-x_2|,
\end{eqnarray}
where the last inequality follows by~\eqref{vmseto_byvshih_lemm} .

It is also clear that
\begin{equation}\label{hest2}
|h(x_1,x_2,r)|\le 4\snorm{u}\le1,\quad x_1,x_2,r\in\R.
\end{equation}

Now let introduce a sequence of functions $(V_n)_{n\in\N}$, which are smooth approximations of the function $x\mapsto|x|$. For $n\in\N$ we put
\begin{equation*}
V_n(x)=\left\{
     \begin{array}{ll}
      \frac3{8n}+\frac34nx^2-\frac18n^3x^4, & \hbox{if $|x|\le\frac1n$;} \\
      |x|, & \hbox{if $|x|\ge\frac1n$.}
     \end{array}
    \right.
\end{equation*}
We see that each $V_n$ is a twice continuously differentiable function. Further, we have $\snorm{V_n'}\le3/2$; $V_n''(x)=0$ if $|x|\ge1/n$ and $V_n''(x)\le 3n/2$ if $|x|\le 1/n$.

Now let $X^1$, $X^2$ be two weak solutions to \eqref{usde} starting at $x\in\R$. Our goal is to show that for any $t\ge0$ we have $X^1(t)=X^2(t)$ a.s.

Put $W^i_t = u(X^i_t)+X^i_t$, $i=1,2$. We apply It\^{o}'s formula to get for each $n\in\N$
\begin{align}\label{eq:16}
V_n(W^1_t-W^2_t)=& \lambda \int_0^t V_n'(W^1_s-W^2_s) (u(X^1_s)-u(X^2_s))\,ds \nn\\
&+\int_0^t\int_{\R}[V_n\bigl(W^1_s-W^2_s+h(X^1_s,X^2_s,r)\bigr)-V_n(W^1_s-W^2_s)]\,\wt N(ds,dr)\nn\\
&+\int_0^t\int_{|r|>1}\psi_n(W^1_s-W^2_s,h(X^1_s,X^2_s,r))\,\nu(dr)ds\nn\\
&+\int_0^t\int_{|r|\le1}\psi_n(W^1_s-W^2_s,h(X^1_s,X^2_s,r))\,\nu(dr)ds\nn+ V_n(0)\\
=:&I_1+I_2+I_3+I_4+\frac3{8n},
\end{align}
where we used the fact that $W^1_0-W^2_0=0$ and denoted
\begin{equation*}
\psi_n(x,y):=V_n(x+y)-V_n(x)-yV_n' (x),\quad x,y\in\R.
\end{equation*}

Let us estimate $\E |X^1_t-X^2_t|$. To do it we bound the expected values of all the four terms in the right-hand side of \eqref{eq:16}. It is easy to deal with the first term.
\begin{equation}\label{venediktov}
\E I_1\le \lambda \snorm{V_n'}\int_0^t\E |u(X^1_s)-u(X^2_s)|\,ds\le \lambda \int_0^t \E|X^1_s-X^2_s|\,ds,
\end{equation}
where we used the fact that $\snorm{u'}\le1/4$.

Since, by \eqref{hest} and \eqref{hest2}
\begin{equation*}
|V_n\bigl(W^1_s-W^2_s+h(X^1_s,X^2_s,r)\bigr)-V_n(W^1_s-W^2_s)|\le \snorm{V_n'}\,\,|h(X^1_s,X^2_s,r)|\le |r|\wedge1,
\end{equation*}
we see that $I_2$ is a martingale and hence
\begin{equation}\label{dva}
\E I_2=0.
\end{equation}

Now let us pass to the analysis of $I_3$. Using again the fact that $\snorm{V_n'}\le 3/2$, we get
\begin{equation*}
|\psi_n(x,y)|\le 2\snorm{V_n'}|y|\le 3|y|,\quad x,y\in\R.
\end{equation*}
Therefore, taking into account \eqref{hest}, we derive
\begin{equation}\label{tri}
\E I_3\le 3\int_0^t\int_{|r|>1}\E|h(X^1_s,X^2_s,r)|\,\nu(dr)ds \le C\int_0^t \E|X^1_s-X^2_s|ds.
\end{equation}

Finally, to analyze $I_4$ we recall that \eqref{hest22} implies for any $s\in[0,T]$
\begin{equation*}
|h(X^1_s,X^2_s,r)|\leq \frac23 |W^1_s-W^2_s|, \quad r\in \R.
\end{equation*}
Therefore if $W^1_s-W^2_s\ge 3/n$, we immediately get
\begin{equation*}
W^1_s-W^2_s+ h(X^1_s,X^2_s,r)\geq \frac1n, \quad r\in \R,
\end{equation*}
and if $W^1_s-W^2_s\le -3/n$, we have
\begin{equation*}
W^1_s-W^2_s+ h(X^1_s,X^2_s,r)\leq -\frac1n, \quad r\in \R.
\end{equation*}
Thus, it follows from the definitions of $\psi_n$ and $V_n$, that if $|W^1_s-W^2_s|\ge 3/n$, then
\begin{equation*}
\psi_n(W^1_s-W^2_s,h(X^1_s,X^2_s,r))=0,\quad r\in \R.
\end{equation*}
Therefore we have,
\begin{equation}\label{I4step1}
I_4= \int_0^t\int_{|r|\le1}\I(|W^1_s-W^2_s|\le 3/n) \psi_n(W^1_s-W^2_s,h(X^1_s,X^2_s,r))\,\nu(dr)ds.
\end{equation}
Note also that $|V_n''(x)|\le\frac32n\I(|x|\le1/n)$, $x\in\R$ and thus
\begin{equation}
\label{1_1_a}
|\psi_n(x,y)|\le \frac{3n}{2}|y|^2,\quad x,y\in\R.
\end{equation}

We take small $\eps>0$ such that $\alpha/2+\eps<1$ and $1/2+\alpha/2+\eps<\alpha+\beta$.
Then we use \eqref{I4step1}, \eqref{1_1_a} and apply  Lemma~\ref{L:45} with $\gamma_1=1/2+\eps/2$, $\gamma_2=\alpha/2+\eps/2$ to derive
\begin{align}\label{arba}
\E I_4&\le C n \int_0^t\int_{|r|\le1}\E\I(|W^1_s-W^2_s|\le 3/n) |h(X^1_s,X^2_s,r)|^2 \,\nu(dr) ds\nn\\
&\le C n \|u\|^2_{\frac{\alpha+1}{2}+\eps}
\int_0^t\int_{|r|\le1}\E\I(|W^1_s-W^2_s|\le 3/n) |W^1_s-W^2_s|^{2(1/2+\eps/2)}r^{2(\alpha/2+\eps/2)}\,\nu(dr)ds\nn\\
&\le Ct n^{-\eps}.
\end{align}

Combining now \eqref{eq:16}, \eqref{venediktov}, \eqref{dva}, \eqref{tri}, \eqref{arba}, we finally deduce
\begin{equation*}
\E V_n(W_t^1-W_t^2)\le \frac3{8n}+C(\lambda)\int_0^t\E|X^1_s-X^2_s|ds+Ct n^{-\eps}.
\end{equation*}
By passing to the limit as $n\to\infty$ and taking into account \eqref{vmseto_byvshih_lemm} we obtain
\begin{equation*}
\E |X_t^1-X_t^2|\le 2\E |W_t^1-W_t^2|\le 2C(\lambda)\int_0^t\E|X^1_s-X^2_s|ds.
\end{equation*}
Clearly, $\E|X^i_s|<\infty$, $s\in[0,T]$, $i=1,2$. Thus, by Gronwall's inequality
\begin{equation*}
\E|X^1_t-X^2_t|=0.
\end{equation*}
This implies the statement of the lemma.
\end{proof}

\subsection{Proof of Theorem \ref{T:1}}\label{sec:kur}

We have already shown that (\ref{mainsde}) has a weak solution (Proposition \ref{p:wex}(ii)). By Proposition \ref{p:pu} we know that pathwise uniqueness
holds for (\ref{usde}) and via Proposition \ref{p:vtreal} we know that
every weak solution to (\ref{mainsde}) is a weak solution to
(\ref{usde}). Therefore we have shown pathwise uniqueness for
(\ref{mainsde}). Now we can apply a generalized version of the classical Yamada-Watanabe theorem, see 
\cite[Theorem 3.4 and Proposition~2.13]{K14}. The strong existence and pathwise
uniqueness for (\ref{mainsde}) then imply strong uniqueness for the same equation.
For the sake of completeness, we rewrite our equation (\ref{mainsde}) in the notation
of \cite{K14} and verify that all the assumptions required for Theorem 3.4 and Proposition~2.13
in \cite{K14} are satisfied.

Let $x \in \R$. Let $(\Omega, {\cal F}, \P)$ be a complete probability space and $(L_t)_{t \geq  0}$ be the symmetric stable $\alpha$-process on it. Let $\Upsilon$ be the product measure of $\delta_x$ and the law of $L$. Let $S_1 = \D_\R[0,\infty)$ and $S_2 = \R \times
 \D_\R[0,\infty)$. Let $ {\cal P}(S_1 \times
  S_2)$ be the space of probability measures on $S_1 \times
  S_2$ with the product Borel $\sigma$-algebras of $S_1$ and $S_2$. Let $X$ be a random variable on $(\Omega, {\cal F}, \P)$ taking values in $S_1,$ $Y$ be a random variable $(\Omega, {\cal F}, \P)$ taking values in $S_2$ and
  $\mu_{X \times Y} \in {\cal P}(S_1 \times S_2)$ be the joint
  distribution of $(X,Y)$.  Moreover we assume that $Y=(x, (L_s)_{s\geq 0})$, that is,
the law of $Y$ is $\Upsilon$.  Then our model (\ref{mainsde}) is specified by a set of constraints
  $\Gamma$ relating $(X,Y)$. More precisely, we denote a solution to
  equation (\ref{mainsde}) as $X$ that satisfy
  \begin{equation*}
\Gamma(X,Y) =0,
\end{equation*}
where $\Gamma $ is given by the constraints of Definition \ref{D:sol}. We denote by $$S_{\Gamma, \Upsilon} : = \{ \mu_{(X,Y)} \in {\cal P}(S_1 \times S_2) : \Gamma(X,Y) = 0\ \mbox{ and } \mu_{X,Y}(S_1 \times \cdot) = \Upsilon(\cdot) \}.$$


We will follow \cite{K14} in defining the notion of compatible solution.

\begin{Definition}[\textbf{Compatible Solutions}] \label{cad}
For each
$t\geq 0$,
let
$\{{\mathcal F}^X_t\}$ and $\{{\mathcal F}^Y_t\}$ be complete filtrations generated by $X$ and $Y$ respectively (see Remark~2.3 in~\cite{K14} for the precise definition of completion). The collection
$${\mathcal C} \equiv \{ ({\mathcal F}^X_t, {\mathcal F}^Y_t) : t \geq 0\}$$
will be referred to as a compatibility structure.
  $X$ is said to be ${\mathcal C}$-compatible with $Y$ if for each $t \geq 0$, and $h \in \SL^1(S_2, \Upsilon)$
  $$ \E(h(Y) \mid {\mathcal F}^X_t \vee {\mathcal F}^Y_t ) = \E(h(Y) \mid {\mathcal F}^Y_t ).$$
  Finally let $$S_{\Gamma, {\mathcal C}, \Upsilon} : = \{ \mu_{(X,Y)} \in S_{\Gamma, \Upsilon} : X \mbox{ is } {\mathcal C}-\mbox{compatible with } Y \}.$$
\end{Definition}

\begin{Lemma} $(X,{L})$ is a weak solution of (\ref{mainsde}) if and only if $ \mu_{(X,Y)} \in S_{\Gamma, {\mathcal C}, \Upsilon}.$ \label{ceqad}
\end{Lemma}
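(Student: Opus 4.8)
The plan is to check that the two descriptions encode exactly the same data. Write $Y=(x,(L_s)_{s\ge0})$. By construction the relation $\Gamma(X,Y)=0$ is nothing but the requirement that $X$ solve \eqref{mainsde} in the sense of Definition~\ref{D:sol} (note Definition~\ref{D:sol} itself makes no reference to a filtration), while the marginal condition $\mu_{X,Y}(S_1\times\,\cdot\,)=\Upsilon$ says precisely that the first coordinate of $Y$ is $x$ and the second coordinate $L$ is a symmetric $\alpha$-stable process in its own right. Hence the only substantive content of the claimed equivalence is that $\mathcal C$-compatibility of $X$ with $Y$ corresponds to the adaptedness requirements in the definition of a weak solution, namely to $L$ being a symmetric $\alpha$-stable process \emph{relative to the relevant filtration}, i.e.\ to the increments of $L$ after time $t$ being independent of that filtration.

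For the ``only if'' direction, suppose $(X,L)$ is a weak solution on $(\Omega,\F,(\F_t)_{t\ge0},\P)$. Since $X$ and $L$ are $(\F_t)$-adapted and $(\F_t)$ is complete, $\F^X_t\vee\F^Y_t\subset\F_t$ for every $t$. Fix $t$ and a bounded measurable $h$ on $S_2$ of product form $h(Y)=h_1\big(L|_{[0,t]}\big)\,h_2\big((L_{t+s}-L_t)_{s\ge0}\big)$. The first factor is $\F^Y_t$-measurable, and the second is independent of $\F_t$ because $L$ is an $(\F_t)$-L\'evy process, hence independent of $\F^X_t\vee\F^Y_t$; therefore
\[
\E\big(h(Y)\mid \F^X_t\vee\F^Y_t\big)=h_1\big(L|_{[0,t]}\big)\,\E\, h_2\big((L_{t+s}-L_t)_{s\ge0}\big)=\E\big(h(Y)\mid \F^Y_t\big).
\]
Products of this form generate the Borel $\sigma$-algebra of $S_2$, so the functional monotone class theorem extends the identity to all bounded measurable $h$, and truncation extends it to all $h\in\SL^1(S_2,\Upsilon)$. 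Thus $X$ is $\mathcal C$-compatible with $Y$, and combined with the two observations of the first paragraph this gives $\mu_{(X,Y)}\in S_{\Gamma,\mathcal C,\Upsilon}$.

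Conversely, suppose $\mu_{(X,Y)}\in S_{\Gamma,\mathcal C,\Upsilon}$ on $(\Omega,\F,\P)$. Equip $\Omega$ with the filtration $\mathcal G_t$ obtained by $\P$-completing $\F^X_t\vee\F^Y_t$. Then $X$ and $L$ are $\mathcal G_t$-adapted, and from $\mu_{X,Y}(S_1\times\,\cdot\,)=\Upsilon$ the process $L$ is a symmetric $\alpha$-stable process with $L_0=0$. Applying the compatibility identity to functions $h$ depending only on $(L_{t+s}-L_t)_{s\ge0}$ and using that these increments are independent of $\F^Y_t$ (the L\'evy property of $L$ in its own filtration), we obtain $\E\big(h(Y)\mid\F^X_t\vee\F^Y_t\big)=\E\, h(Y)$; hence the post-$t$ increments of $L$ are independent of $\mathcal G_t$, so $L$ is an $(\mathcal G_t)$-L\'evy process, i.e.\ an $(\mathcal G_t)$-adapted symmetric $\alpha$-stable process. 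Finally $\Gamma(X,Y)=0$ says exactly that $X$ solves \eqref{mainsde} in the sense of Definition~\ref{D:sol}. Therefore $(X,L)$, on the complete filtered space $(\Omega,\F,(\mathcal G_t)_{t\ge0},\P)$, is a weak solution of \eqref{mainsde}.

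The argument is essentially bookkeeping; the one point that needs care is the matching between ``$\mathcal C$-compatibility'' and ``$L$ is a symmetric $\alpha$-stable process relative to the filtration'', which reduces --- via the product decomposition of path functionals of $L$ into a pre-$t$ part and a post-$t$ increment part --- to the standard fact that a L\'evy process relative to a filtration is characterized by independence of its future increments from that filtration. Keeping track of the $\P$-completions in the definitions of $\F^X_t$, $\F^Y_t$ and $\mathcal G_t$ (as in \cite[Remark~2.3]{K14}) is the only other thing to watch.
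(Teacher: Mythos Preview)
Your argument is correct and follows essentially the same route as the paper's proof: both directions hinge on decomposing path functionals of $L$ into a pre-$t$ part and a post-$t$ increment part, and on matching $\mathcal C$-compatibility with the L\'evy property of $L$ relative to $\F^X_t\vee\F^Y_t$. The only cosmetic difference is that the paper writes any $h(Y)$ directly as $h_t(Y_{\cdot\le t},Y^{\cdot\ge t})$ and computes in one step, whereas you start from product functions and invoke the monotone class theorem; your converse is also spelled out in slightly more detail than the paper's, which simply asserts that $L$ is an $(\F^X_t\vee\F^Y_t)$-adapted $\alpha$-stable process.
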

\begin{proof}
 Let $(X,L)$ be a weak solution to (\ref{mainsde}) adapted to a complete filtration ${\cal F}_t.$
Define $Y_{\cdot\leq t} := (x, (L_{\min{\{s, t\}}})_{s \geq 0})$ and
$Y^{\cdot\geq t} := (L_{t +s} -L_t)_{s \geq 0}$.

Clearly, for each $t \geq 0$ we have ${\cal F}^Y_t \subset {\cal F}^X_t \vee {\cal F}^Y_t \subset \mathcal{F}_t$. Note that  $Y_{\cdot\leq t} $ is $ {\cal F}^Y_t$-measurable. Further, 
$(L_t)_{t\geq 0}$ is an $\alpha$-stable process with respect to the filtration $\mathcal{F}_t$ and thus also with respect to
${\cal F}^X_t \vee {\cal F}^Y_t$. This implies that
 $Y^{\cdot\geq t}$  is independent of $ {\cal F}^X_t \vee {\cal F}^Y_t$. For any $h \in \SL^1(S_2, \Upsilon)$ and for all $t \geq 0$, there exist bounded measurable functions $h_t$ on ${\R\times\D_\R[0,\infty) \times \D_\R[0,\infty)}$ such $h(Y) = h_t(Y_{\cdot\leq t},Y^{\cdot\geq t})$ a.s. Then, {following the argument in the proof of Lemma~2.4 in~\cite{K14}, we get}
\begin{align*}
 \E(h(Y) \mid {\cal F}^{X}_t \vee  {\cal F}^Y_t) =& \E(h_t(Y_{\cdot\leq t}, Y^{\cdot\geq t}) \mid {\cal F}^{X}_t \vee {\cal F}^Y_t) \\
 =& \E\Bigl( \int h_t(Y_{\cdot\leq t},y) \P(Y^{\cdot\geq t} \in dy) \Bigm| F^{X}_t \vee {\cal F}^Y_t \Bigr)\\
=& \int h_t(Y_{\cdot\leq t},y) \P(Y^{\cdot\geq t}\in dy)\\
= & \E\Bigl( \int h_t(Y_{\cdot\leq t},y) \P(Y^{\cdot\geq t} \in dy) \Bigm| {\cal F}^Y_t\Bigr)\\
=& \E(h(Y) \mid
    {\cal F}^Y_t)
\end{align*}
{Thus} $X$ is ${\mathcal C}$-compatible with $Y$ and therefore $ \mu_{(X,Y)} \in S_{\Gamma, {\mathcal C}, \Upsilon}$.

For the converse, let $(X,Y)$ be such that $ \mu_{(X,Y)} \in S_{\Gamma, {\mathcal C}, \Upsilon}.$ Take ${\mathcal F}_t = {\cal F}^{X}_t \vee {\cal F}^Y_t$. Then it follows that on the complete filtered probability space $(\Omega,{\cal F}, (\F_t)_{t\ge 0}, \P)$ the process $X_t$ is adapted to ${\cal F}_t$, the process $L_t$ is an $(\F_t)_{t\ge 0}$ adapted symmetric $\alpha$-stable process and $(X,L)$ satisfies (\ref{mainsde}). Hence $(X,L)$ 
is indeed a weak solution of (\ref{mainsde}).
\end{proof}
To complete the proof of Theorem \ref{T:1} we need the following definition.
\begin{Definition}
We say that pointwise uniqueness holds in $S_{\Gamma, {\mathcal C}, \Upsilon}$ if for any random elements $X_1$, $X_2$ and $Y$ defined on the same probability space with $ \mu_{(X_1,Y)} \in S_{\Gamma, {\mathcal C}, \Upsilon}$ and $ \mu_{(X_2,Y)} \in S_{\Gamma, {\mathcal C}, \Upsilon}$ one has $X_1 = X_2$ a.s.
\end{Definition}

\medskip

Now we are ready to finish \\
\noindent\textit{Proof of Theorem \ref{T:1}.}
By Proposition \ref{p:wex} we know {that} a weak solution exists to
(\ref{mainsde}). Thus $S_{\Gamma, \Upsilon} \neq \emptyset.$ By Proposition
\ref{p:pu} and Proposition \ref{p:vtreal} we know pathwise uniqueness {holds for (\ref{mainsde})}.
Then the converse part of Lemma~\ref{ceqad} implies that pointwise uniqueness holds in $S_{\Gamma, {\mathcal C}, \Upsilon}$. 
Then pointwise uniqueness in $S_{\Gamma, {\mathcal C}, \Upsilon}$ along with \cite[Lemma 2.10]{K14} and the direct part of Lemma~\ref{ceqad}, implies that the hypotheses of \cite[Theorem 3.4]{K14} are satisfied. Now \cite[Theorem 3.4 and Proposition 2.13]{K14} together imply that (\ref{mainsde}) has a unique strong solution. (As a caution to the reader, to avoid any confusion, we note that the word ``strong'' has a different meaning in~\cite{K14}.)
\qed

\begin{appendices}

\section{Proof of auxiliary results}
\subsection{Proof of Lemma~\ref{P:1} } \label{p32a}
\begin{proof}
(i). We use an argument similar to the one used in the proof of \cite[Theorem~2.3.8]{T}. Let $F f$ and $F^{-1}f $ denote the Fourier transform and the inverse Fourier transform
of $f\in \S'$, respectively. We have
\begin{equation}\label{TriebelStep1}
\|\OL_\alpha f \|_{\gamma-\alpha}=\|F^{-1}|x|^\alpha F f \|_{\gamma-\alpha}=\|F^{-1}|x|^\alpha(1+x^2)^{-\alpha/2} F F^{-1}(1+x^2)^{\alpha/2}Ff \|_{\gamma-\alpha}
\end{equation}

Let $W^{1}_2(\R)$ denote the Sobolev space of functions $\R\to\R$. Let $\psi\in S$ and $\phi\in S$ be a functions with
\begin{align*}
&0\le \psi(x)\le 1,\quad \supp\psi\subset \{|x|\le 4\},\quad \psi(x)=1\,\,\text{ if $|x|\le 2$}; \\
&0\le \phi(x)\le 1,\quad \supp\phi\subset \{1/4\le |x|\le 4\},\quad \phi(x)=1\,\,\text{ if $1/2\le|x|\le 2$}.
\end{align*}
Following \cite[Section~2.4.8]{T}, define for a function $m\colon \R\to\R$
\begin{equation*}
\|m\|_{h^1_2}:=\|\psi m\|_{W^1_2}+\sup_{k\in\Z_+}\|\phi(\cdot)m(2^k\cdot)\|_{W^1_2}.
\end{equation*}
We say that $m\in h^1_2$ if $\|m\|_{h^1_2}<\infty$. Then by \cite[Formula~2.6.1.(2)]{T} for any $f\in \C^{\eta}$, $\eta\in\R$, $m\in W^1_2(\R)$ we have
\begin{equation}\label{TriebelStep2}
\|F^{-1} m F f \|_{\eta}\le C\|m\|_{h^{1}_2}\|f\|_{\eta}.
\end{equation}
Let $\rho(x):=|x|^\alpha(1+x^2)^{-\alpha/2}$, $x\in\R$.
Taking in \eqref{TriebelStep2} $\eta:=\gamma-\alpha$ and combining it with \eqref{TriebelStep1}, we deduce
\begin{equation*}
\|\OL_\alpha f \|_{\gamma-\alpha}\le C \|\rho\|_{h^1_2} \| F^{-1}(1+|x|^2)^{\alpha/2}Ff \|_{\gamma-\alpha}\le
C \|\rho\|_{h^1_2} \| f \|_{\gamma},
\end{equation*}
where the second inequality follows from \cite[Theorem~2.3.8(i)]{T}. Since $\alpha>1$, we see that $\|\rho\|_{h^1_2}<\infty$. This implies the statement of the theorem.

(iii). Let $p_t$ denotes the kernel associated with the semigroup $P_t$. Then for any $t>0$, $f\in\C^\gamma$ we get
\begin{equation*}
\|P_t f\|_\gamma=\Bigl\|\int_\R p_t(y)f(\cdot-y) \, dy\Bigr\|_\gamma\le
\int_\R p_t(y)\|f(\cdot-y)\|_\gamma \, dy= \|f\|_\gamma.
\end{equation*}
This implies for any $t\ge1$
\begin{equation*}
\|P_t f\|_\gamma=\|P_{t-1}P_1 f\|_\gamma\le \|P_1 f\|_\gamma\le C \|f\|_\eta,
\end{equation*}
where the last inequality follows from Lemma~\ref{P:1}(ii).
\end{proof}

\subsection{Proof of Lemma~\ref{L:42}}\label{A:L42}
\begin{proof}
(i). The proof of this result is standard. Fix a compact set $A\subset \R\setminus \{0\}$. Also let
$D(X,L):=\{t\geq 0: \P(X_t=X_{t-} \;{\rm and}\; L_t=L_{t-})=1\}$. Note that by \cite[Lemma~3.7.7]{EK86}, the complement of $D(X,L)$ is at most countable.

For any continuous $g$ with compact support in $\R\setminus\{0\}$ we denote
$$ \wt N^n_r(g) = \int_0^r\int_{\R} g(x) \wt N^n(dx,d s),\;
\wt N_r(g) = \int_0^r\int_{\R} g(x) \wt N(dx,ds),\; \quad r\geq 0.$$
Clearly by the martingale assumption on $(\wt N^n_{r}(A))_{r\geq 0}$ and boundedness of its second moment
we easily get by the dominated
convergence theorem that
$$ \wt N^n_r(g), \; r\geq 0,$$
is also an ${\cal F}^{X^n}_r$-martingale.
Fix arbitrary $t>s\geq 0: t,s\in D(X,L)$. Chose any $m\in\Z_+$ and arbitrary $0\leq t_1<t_2<\ldots<t_m\leq s$ such that $t_1,t_2\ldots\in D(X,L)$.
 Then for any bounded continuous functions $h_1,h_2,\ldots, h_m$, and continuous $g$ with compact support in $\R\setminus\{0\}$
by the martingale property of $\wt N^n_{t}(A),\, t\geq 0$, we have
\begin{equation*}
\E \Bigl( (\wt N^n_{t}(g)-\wt N^n_{s}(g)) \prod_{i=1}^m h_i(X^n_{t_i})\Bigr)=0.
\end{equation*}
Since $s,t, t_1,t_2\ldots\in D(X,L)$, by Lemma~3.7.8
in~\cite{EK86} we have
\begin{eqnarray*}
(X^n_{t_1},\ldots, X^n_{t_m}, \wt N^n_s(g), \wt N^n_t(g))\rightarrow
(X_{t_1},\ldots, X_{t_m}, \wt N_s(g), \wt N_t(g)),
\end{eqnarray*}
a.s. as $n\rightarrow \infty$. Note also that the second moment of $\wt N^n_{t}(A) $ is bounded uniformly in $n$.
Thus by passing to the limit as
$n\rightarrow\infty$, we can use the uniform integrability to get
\begin{equation*}
\E \left( (\wt N_{t}(g)-\wt N_{s}(g)) \prod_{i=1}^m h_i(X_{t_i})\right) = \lim_{n\rightarrow \infty}
\E \left( (\wt N^n_{t}(g) -\wt N^n_{s}(g))\prod_{i=1}^m h_i(X^n_{t_i})\right)=0.
\end{equation*}
Recall that the complement of $D(X,L)$ is at most countable. Thus,
since $X$, $L$ are c\`adl\`ag, $0\leq t_1< t_2< s<t$ were arbitrary in $D(X,L)$ and $h_i, i=1,\dots,m,$ were arbitrary continuous bounded functions,
we get the desired martingale property of $\wt N_t(g), t\geq 0$ for any continuous $g$ with compact support in $\R\setminus\{0\}$ . Now use again the dominated convergence theorem to get the martingale property of
 $\wt N_t(A), t\geq 0,$ for any compact $A\in \R\setminus\{0\}$.

(ii).
First of all let us note that conditions \eqref{rkvadrat} and \eqref{cond1} and the first part of the lemma imply that $I^n$ and $I$ are well--defined; moreover these processes are square--integrable martingales (see, e.g., \cite[Theorem~4.2.3(4)]{Apple}).

Fix arbitrary $0<\eps_0<\eps$. Let $\chi$ be a smooth non--decreasing function $\R_+\to[0,1]$ such that $\chi(x)=0$ for $x\in[0,\eps_0)$; $\chi(x)=1$ for $x>\eps$; and $0\le\chi\le1$ for $x\in[\eps_0,\eps]$.

For any $n\in\Z_+$ we split $I^n$ into several parts:
\begin{align*}
I^n(t):=&\int_0^t \int_{|r|>\eps} f_n(X^n_{s-},r) N^n(ds,dr)-\int_0^t \int_{|r|>\eps} f_n(X^n_{s-},r)\,\nu(dr)ds\nn\\
&+\int_0^t \int_{|r|\le\eps} f_n(X^n_{s-},r)\wt N^n(ds,dr)\nn\\
=& \sum_{s\le t } f_n(X^n_{s-},\Delta L^n(s))\chi(|\Delta L^n(s)|)-\int_0^t \int_{|r|>\eps} f_n(X^n_{s-},r)\,\nu(dr)ds\\
&+\sum_{s\le t } f_n(X^n_{s-},\Delta L^n(s))\bigl(\I(|\Delta L^n(s)|>\eps)-\chi(|\Delta L^n(s)|)\bigr)\\
&+\int_0^t \int_{|r|\le\eps} f_n(X^n_{s-},r)\wt N^n(ds,dr)\nn\\
=:& I^n_1(t)+I^n_2(t)+I^n_3(t)+I^n_4(t),
\end{align*}
where $t\in[0,T]$. In a similar way we define the terms $I_1(t)$, $I_2(t)$, $I_3(t)$, $I_4(t)$. Let us analyze these terms. Our plan is as follows. We will show that $I^n_1$ and $I^n_2$ converges to $I_1$ and $I_2$, respectively, in the Skorokhod metric, and all the other terms: $I^n_3$, $I_3$, $I^n_4$, $I_4$ are ``small''. Since $I_2$ is continuous this would imply the required convergence of $I^n$ to $I$.

Let us implement this plan. Using the fact that the Skorokhod distance is smaller than the uniform distance, we derive
for $n\in\Z_+$
\begin{align}\label{DuhovnayaSkrepa}
d(I^n,I)\le& d(I^n,I^n_1+I^n_2)+d(I^n_1+I^n_2,I_1+I_2)+d(I_1+I_2,I)\nn\\
\le& d(I^n_1+I^n_2,I_1+I_2)+\snorm{I_3^n}+\snorm{I_3}+\snorm{I_4^n}+\snorm{I_4}.
\end{align}

To deal with $I^n_1$, we apply Lemma~\ref{P:PK} to the metric spaces $E_1:=\R^2$, $E_2:=\R$, and to the family of mappings $\Phi^n\colon \D_{E_1}[0,T]\to\D_{E_2}[0,T]$
\begin{equation*}
\Phi^n(Z_1,Z_2)(t)=\sum_{s\le t } f_n(Z_1(s-),\Delta Z_2(s))\chi(|\Delta Z_2(s)|),\quad t\in[0,T],
\end{equation*}
where $Z=(Z_1,Z_2)\in \D_{E_1}[0,T]$ and $n\in\Z_+$. We define the mapping $\Phi$ in a similar way.
We see that for any $Z\in\D_{E_1}[0,T]$, $\lambda\in\Lambda$, $n\in\Z_+$ we have $\Phi^n(Z\circ\lambda)=\Phi^n(Z)\circ\lambda$. It is also clear that if
$\snorm{Z^n-Z}\to0$, then $\snorm{\Phi^n(Z^n)-\Phi(Z)}\to0$. Thus, all the conditions of Lemma~\ref{P:PK} are met. Put now $Z^n:=(X^n,L^n)$ and recall that by assumption $(X^n,L^n)\to(X,L)$ a.s. in the Skorokhod topology. Therefore, by Lemma~\ref{P:PK} we have
\begin{equation}\label{ivan}
d(I^n_1,I_1)=d\bigl(\Phi^n(X^n,L^n),\Phi(X,L)\bigr)\to0\text{\, a.s. as $n\to\infty$}.
\end{equation}

Now we move on to $I^n_2$. It is easy to analyze this term. By Lemma~\ref{L:425}, we have
\begin{equation}\label{idva}
d(I^n_2,I_2)\le\snorm{I^n_2-I_2}\to0\text{\, a.s. as $n\to\infty$}.
\end{equation}
Since $I_2$ is continuous, it follows from \eqref{ivan}, \eqref{idva} and \cite[Proposition~VI.1.23]{JS87} that
\begin{equation}\label{ivanich}
d(I^n_1+I^n_2,I_1+I_2)\to0\text{\, a.s. as $n\to\infty$}.
\end{equation}

To study $I^n_3$ and $I_3$ we note that by definition for any $n\in\Z_+$
\begin{equation*}
|I^n_3(t)|+|I_3(t)|\le C \sum_{s\le t } \I\bigl(|\Delta L^n(s)|\in[\eps_0,\eps]\bigr),\quad t\in[0,T],
\end{equation*}
where we have also used the uniform boundedness of the sequence $(f_n)$. Therefore
\begin{equation}\label{petrovich}
\E\snorm{I^n_3}+\E\snorm{I_3}\le C T \nu([\eps_0,\eps]),\quad n\in\Z_+.
\end{equation}

Finally, let us deal with $I^n_4$ and $I_4$. It is clear that both $I^n_4$ and $I_4$ are square--integrable martingales. We apply Doob's $\SL_2$--martingale inequality (see, e.g., \cite[Theorem~2.1.5]{Apple}) to derive
\begin{align}\label{litij}
\E \snorm{I^n_4}^2+\E \snorm{I_4}^2&\le \E I^n_4(T)^2+\E I_4(T)^2\nn\\
&= \int_0^T \int_{|r|\le\eps} \E(f_n(X^n_{s-},r)^2+f(X_{s-},r)^2)\,\nu(dr)ds\nn\\
&\le CT \int_{|r|\le\eps} r^{2} \nu(dr),
\end{align}
where in the last inequality we used condition \eqref{cond1}.

Now combining \eqref{ivanich}, \eqref{petrovich}, \eqref{litij} with \eqref{DuhovnayaSkrepa}, we finally deduce
\begin{align}\label{finishiruem}
\limsup_{n\to\infty} \E[d(I^n,I)\wedge1]\le& \limsup_{n\to\infty}
\E[d(I^n_1+I^n_2,I_1+I_2)\wedge1]\nn\\
&+ \limsup_{n\to\infty}(\E\snorm{I^n_3}+\E\snorm{I_3}+\E\snorm{I^n_4}+\E\snorm{I_4})\nn\\
\le& C T \nu([\eps_0,\eps]) +C\sqrt{T} \Bigl(\int_{|r|\le\eps} r^{2} \nu(dr)\Bigr)^{1/2}.
\end{align}
Recall that $\eps>0$ and $\eps_0\in(0,\eps)$ were arbitrary. Recall that by the definition of the L\'evy measure, we have $\int_{|r|\le1} r^2\nu (dr)<\infty$. Thus, by taking consequently the limits in the right-hand side of \eqref{finishiruem} first as $\eps_0\nearrow\eps$ and then as $\eps\to0$ we get
\begin{equation*}
\limsup_{n\to\infty} \E[d(I^n,I)\wedge1]=0
\end{equation*}
and hence $d(I^n,I)$ converges to $0$ in probability.
\end{proof}

\subsection{Proof of Lemma~\ref{L:vtorojmoment}}\label{A:2mom}
\begin{proof}
Let us denote
\begin{equation*}
I_t:=\int_{0}^{t} \int_\R f(s,r,\omega)\wt N(ds,dr),\quad t\in[0,T].
\end{equation*}
As usual, to calculate the second moment of $I$ we have to treat ``small'' and ``large'' jumps separately. Therefore we split $I$ into a sum of two integrals (one with ``small'' jumps and the other with ``big'' jumps):
\begin{equation} \label{splitI}
I_t= \int_0^t\int_{|r|\ge1} f(s,r,\omega)\wt N(ds,dr)+\int_0^t\int_{|r|\le1} f(s,r,\omega)\wt N(ds,dr)
=:I^{1}_t+I^{2}_t.
\end{equation}

We begin with large jumps. Using \eqref{fbound}, we derive for any $0\le t_1\le t_2\le T$
\begin{align}\label{esti2}
|I^{1}_{t_2}-I^{1}_{t_1}|&\le \int_{t_1}^{t_2}\int_{|r|\ge1} | f(s,r,\omega)|(N(ds,dr)+\nu(r)ds)\nn\\
&\le C_f \sum_{s\in[t_1,t_2]} \I(|\Delta L_s|\ge1)+CC_f (t_2-t_1) .
\end{align}
Note that the process $t\mapsto\sum_{s\le t} \I(|\Delta L_s|\ge1)$ is a Poisson process with intensity $\nu(\R\setminus(-1;1))$ (see, e.g., \cite[Theorem 2.3.5 (1)]{Apple}). Therefore \eqref{esti2} implies for any stopping times $\tau_1,\tau_2\in[0,T]$
\begin{equation}\label{largejumps}
\E |I^{1}_{\tau_2}-I^{1}_{\tau_1}|^2\le C C_f^2\E |\tau_2-\tau_1|.
\end{equation}

To study small jumps we apply the standard machinery for calculating the second moment of a stochastic integral (see, e.g., \cite[Theorem~4.2.3(2)]{Apple}). We make use of \eqref{fbound} to get
\begin{align*}
\E(I^{2}_{\tau_2}-I^{2}_{\tau_1})^2\le&
\E \int_{\tau_1}^{\tau_2}\int_{|r|\le1} f(s,r,\omega)^2\,\nu(dr)ds\nn\\
\le&C C_f^2 \E \int_{\tau_1}^{\tau_2}\int_{|r|\le1} |r|^{2\gamma-\alpha-1}\,drds\le C C_f^2 \E |\tau_2-\tau_1|,
\end{align*}
where the second inequality holds since $\gamma>\alpha/2$ by assumption. Combining this with \eqref{largejumps} and substituting into \eqref{splitI} we obtain \eqref{secondmomentstoch}.
\end{proof}

\subsection{Proof of Lemma~\ref{L:45}}
\begin{proof}
First consider the case $\gamma\in(0,1)$. Then clearly for any $x_1,x_2,r\in\R$
\begin{align*}
&|f(x_1+r)-f(x_1)-f(x_2+r)+f(x_2)|\le 2\|f\|_{\gamma}|x_1-x_2|^\gamma;\\
&|f(x_1+r)-f(x_1)-f(x_2+r)+f(x_2)|\le 2\|f\|_{\gamma}|r|^\gamma.
\end{align*}
This immediately implies that for any $\gamma_1,\gamma_2>0$ such that $\gamma_1+\gamma_2=\gamma$ we have
\begin{equation*}
|f(x_1+r)-f(x_1)-f(x_2+r)+f(x_2)|\le 2\|f\|_{\gamma}|x_1-x_2|^{\gamma_1}|r|^{\gamma_2},
\end{equation*}
which is \eqref{est2holder}.

Now let $\gamma\in(1,2)$. Take any $x_1,x_2,r\in\R$. Then
\begin{equation}\label{gammap1}
|f(x_1+r)-f(x_1)-f(x_2+r)+f(x_2)|=\bigl|\int_{x_1}^{x_2} (f'(s)-f'(s+r))\,ds \bigr|\le \|f\|_{\gamma}|x_1-x_2||r|^{\gamma-1}.
\end{equation}
Similarly,
\begin{equation}\label{gammap2}
|f(x_1+r)-f(x_1)-f(x_2+r)+f(x_2)|\le \|f\|_{\gamma}|x_1-x_2|^{\gamma-1}|r|.
\end{equation}
Now we take any $\gamma_1,\gamma_2>0$ such that $\gamma_1+\gamma_2=\gamma$. We raise both sides of \eqref{gammap1} to the power of $(1-\gamma_2)/(2-\gamma)$, both sides of \eqref{gammap2} to the power of $(1-\gamma_1)/(2-\gamma)$, and multiply the obtained inequalities. Clearly, these powers are between $0$ and $1$. We get
\begin{equation*}
|f(x_1+r)-f(x_1)-f(x_2+r)+f(x_2)|\le \|f\|_{\gamma}|x_1-x_2|^{\gamma_1}|r|^{\gamma_2}.
\end{equation*}
This yields \eqref{est2holder}
\end{proof}

\end{appendices}


\noindent {\bf Siva Athreya}\\
8th Mile Mysore Road, Indian Statistical Institute,
     Bangalore 560059, India.\\
     Email: \texttt{athreya@isibang.ac.in}

     \vfill
\noindent {\bf Oleg Butkovsky}\\
  Technische Universit\"at Berlin, Institut f\"ur Mathematik, MA 7-5, Fakult\"at II, Strasse des 17.~Juni 136, 10623 Berlin, FRG. Email: \texttt{oleg.butkovskiy@gmail.com}

     \vfill

\noindent {\bf Leonid Mytnik}\\
Technion --- Israel Institute of Technology,
Faculty of Industrial Engineering and Management
 Haifa, 3200003, Israel.
 Email: \texttt{leonid@ie.technion.ac.il}
 \vfill



\end{document}